\numberwithin{equation}{section}
\theoremstyle{plain}
\newcommand{\lam}{\lambda}
\newcommand{\tV}{\widetilde{V}}
\newcommand{\red}{\color{black}}
\newtheorem{lemma}{Lemma}[section]
\newtheorem{theorem}{Theorem}[section]
\newtheorem{proposition}{Proposition}[section]
\newtheorem{corollary}{Corollary}[section]
\newtheorem{remark}{Remark}[section]
\def \E{\mathbb{E}}
\def\@setcopyright{}
\def\serieslogo@{}
\title{Optimal consumption and life insurance under shortfall aversion and a drawdown constraint}
\author{Xun LI\thanks{Department of Applied Mathematics, The Hong Kong Polytechnic University, Hung Hom, Kowloon, Hong Kong. Email:\texttt{li.xun@polyu.edu.hk}}
\and
Xiang YU\thanks{Department of Applied Mathematics, The Hong Kong Polytechnic University, Hung Hom, Kowloon, Hong Kong. Email:\texttt{xiang.yu@polyu.edu.hk}}
\and
Qinyi ZHANG\thanks{Department of Applied Mathematics, The Hong Kong Polytechnic University, Hung Hom, Kowloon, Hong Kong. Email:\texttt{qinyi-qy.zhang@connect.polyu.hk}}
}
\begin{document}

\date{\vspace{-0.5cm}}
\maketitle

\begin{abstract}

This paper studies a life-cycle optimal portfolio-consumption problem when the consumption performance is measured by a shortfall aversion preference under an additional drawdown constraint on consumption rate. Meanwhile, the agent also dynamically chooses her life insurance premium to maximize the expected bequest at the death time.
By using dynamic programming arguments and the dual transform, we solve the HJB variational inequality explicitly in a piecewise form across different regions and derive some thresholds of the wealth variable for the piecewise optimal feedback controls. Taking advantage of our analytical results, we are able to numerically illustrate some quantitative impacts on optimal consumption and life insurance by model parameters and discuss their financial implications.
\ \\
\ \\
\textbf{Keywords:} Optimal consumption, life insurance, shortfall aversion, consumption drawdown constraint, piecewise feedback control \\
\ \\
\textbf{Mathematical Subject Classification (2020)}:  91B16, 91B42, 93E20, 49L20
\end{abstract}
\section{Introduction}\label{sec: intro}
Since the seminal studies \cite{Mert1969RES} and \cite{Mert1971JET} by Merton, the continuous time optimal investment and consumption problem under utility maximization has been extensively investigated by incorporating various stochastic factors, market incompleteness, trading constraints and the combination with other financial or actuarial decisions. Giving a complete literature review is beyond the scope of this paper. Among different model generalizations, the optimal life insurance in the context of utility maximization has attracted considerable attention. Richard \cite{Richard1975JFE} proposes the optimal dynamic life insurance problem for the first time by combining the portfolio and consumption control under a given distribution of a bounded death time. Pliska and Ye \cite{PliskaYe2007JBF} further study a similar optimal life insurance and consumption problem for an income earner when the lifetime random variable is unbounded. Later, Ye \cite{Ye2007ACC} extends the model in \cite{PliskaYe2007JBF} by considering the dynamic portfolio in a risky asset. Huang and Milevsky \cite{HuangMilevsky2008JBF} solve a portfolio choice problem that includes mortality-contingent claims and labor income under general HARA utilities.
Duarte et al. \cite{DuartePinheiroPP2011DGS1} extend \cite{Ye2007ACC} to allow for multiple risky assets. Recently, wei et al. \cite{WeiChengJW2020IME} solve the problem when a couple aims to optimize their consumption, portfolio and life-insurance purchasing strategies by maximizing the family objective until retirement.  

On the other hand, time non-separable preferences have gained in popularity on modeling consumption performance thanks to the capability to explain the observed consumption smoothness and equity premium puzzle. In the existing literature, there are two major types of time non-separable preferences involving the information of the past consumption path. The first type is the so-called habit formation preference, in which the utility is generated by the difference between the consumption rate and the weighted integral of the past consumption control; see  \cite{constantinides1990habit}. Along this direction, many notable studies in complete and incomplete market models can be found in Detemple and Zapatero \cite{DetempleZapatero1991Econometrica}, \cite{DetempleZapatero1992MF}, Detemple and Karatzas \cite{DetempleKaratzas2003JET}, Yu \cite{Yu2015AoAP}, \cite{Yu2017AoAP},
Yang and Yu \cite{YangYu2019arXiv}, Guan et al. \cite{GLY20}, He et al. \cite{HeLiang} among others. See also Angoshtari et al. \cite{AngoshtarBayraktar2022Sifin} where the habit formation is formulated as a control constraint. Some studies on optimal life insurance in the context of consumption habit formation can also be found, for example, in Ben-Arab et al. \cite{Ben} and Boyle et al. \cite{Boyle}.


The second type of preference chooses the past consumption maximum as the reference level. Guasoni et al. \cite{GuasoniHubermanR2020MFF} propose a shortfall aversion preference that measures the performance by the ratio of the consumption rate and the past spending maximum. Deng et al. \cite{DengLiPY2021FS} adopt the formulation from the habit formation preference where the utility is defined on the difference between the consumption rate and a proportion of the historical consumption maximum.
Later, Li et al. \cite{LiYuZ2021arXiv} extend the work of \cite{DengLiPY2021FS} to an S-shaped utility to account for agent's loss aversion towards the relative consumption with respect to the past consumption maximum.
Liang et al. \cite{LiangLuoY2022arXiv} generalize the preference in \cite{DengLiPY2021FS} such that the risk aversion differs when the consumption falls below the reference process and an additional drawdown constraint is enforced.
We also note some fruitful studies on the impact of the past consumption maximum when a ratcheting or a drawdown control constraint is considered under the standard time separable utility on consumption; see, for example, Angoshtari et al.  \cite{AngoshtarBayraktar2019Sifin}, Arun \cite{Arun2020arXiv}, Dybvig \cite{Dyb} , Jeon and Oh \cite{JeonOh2022JMAA}, Jeon and Park \cite{JeonPark2021MMOR}.

In this paper, we work with the second type of preference and choose the optimal relative consumption with reference to past spending maximum.
In particular, 
we adopt the shortfall aversion preference proposed in \cite{GuasoniHubermanR2020MFF} together with the dynamic life insurance control, {\red{}and enforce an additional drawdown constraint on consumption rate as a subsistence consumption requirement.}
{\red{}The objective function of the control problem also involves the expected bequest from life insurance, which renders the dimension reduction in \cite{GuasoniHubermanR2020MFF} not applicable in our problem. Instead, we encounter a two-dimensional HJB equation.} Similar to Deng et al. \cite{DengLiPY2021FS}, taking the wealth level and reference level as two state variables, we can derive the value function and optimal strategies in analytical form by solving the associated HJB inequality with some boundary conditions.
The HJB equation can be expressed in a piecewise form based on the decomposition of the state domain such that the feedback optimal consumption:
(1) equals the drawdown constraint rate;
(2) lies between the drawdown constraint and the past spending maximum;
(3) attains the past consumption peak.
By using the dual transform and some smooth-fit conditions, the HJB variational inequality is linearized to a parameterized ODE, which can be solved in closed-form.
The desired feedback form of optimal consumption, investment and insurance strategies can be obtained by the inverse transform. {\red Contrary to \cite{GuasoniHubermanR2020MFF}, our boundary curves for the wealth variable to distinguish different optimal feedback controls are all nonlinear functions due to the additional life insurance control.} Our analytical results allow us to numerically illustrate how the model parameters affect the optimal decision on consumption and life insurance. By comparing with some existing results without shortfall aversion, we can also illustrate how the reference of past spending maximum motivates the insurance purchase. Some interesting financial implications induced by the shortfall aversion preference and the drawdown constraint are discussed therein.


The remainder of this paper is organized as follows.
Section \ref{sec: formulate} introduces the market model with mortality risk and the stochastic control problem under the shortfall aversion preference.
Section \ref{sec: result} gives some heuristic arguments to solve the HJB variational inequality and present main results on the optimal feedback consumption, portfolio and life insurance controls. Section \ref{sec: numerical} presents several numerical examples to illustrate some sensitivity analysis results and their financial implications.
Some proofs are collected in Section \ref{sec: proof}.

\section{Model Setup and Problem Formulation}\label{sec: formulate}
\subsection{Market Model}
Let $(\Omega, \mathcal{F}, \mathbb{F}, \mathbb{P})$ be a filtered probability space and $\mathbb{F}=(\mathcal{F}_t)_{t\geq 0}$ satisfies the usual conditions.
The financial market model consists of one riskless asset and one risky asset.
The riskless asset price follows $dB_t=rB_tdt$, where $r>0$ is the interest rate.
The risky asset price is governed by the following stochastic differential equation (SDE)
\begin{equation}
dS_t=S_t\mu dt+S_t\sigma dW_t,\quad t\geq 0, \nonumber
\end{equation}
where $W$ is an $\mathbb{F}$-adapted Brownian motion, $\mu\in\mathbb{R}$ and $\sigma>0$ stand for the drift and volatility. It is assumed that $\mu>r$ and the sharp ratio is denoted by $\kappa:= \frac{\mu-r}{\sigma} > 0$.
{\red It is assumed that the individual's death time $\tau$ has an exponential distribution with the parameter $\lam>0$.}

Let $(\pi_t)_{t\geq 0}$ be the amount of wealth that the agent allocates in the risky asset, and let $(c_t)_{t\geq 0}$ represent the consumption rate.
Similar to \cite{Lee2021Math}, we assume that the life insurance contracts cover mortality risk and they are actuarially fair.
Denote by $p_t$ and $L_t$ the instantaneous life insurance premium rate paid by the individual and insurance benefit paid by the insurer, respectively.
We have that $p_t = \lam L_t$, and the bequest $b_t$ received by the individual's heir is given by $b_t = X_t+L_t = X_t + \frac{p_t}{\lam}$.
As a result, the wealth process satisfies
\begin{equation}\label{eq: system}
\begin{aligned}
dX_t&= \left(rX_t + \pi_t(\mu -r) - c_t - p_t\right)dt+\pi_t\sigma dW_t \\
&= \left((r+\lam)X_t + \pi_t(\mu -r) - c_t - \lam b_t\right)dt+\pi_t\sigma dW_t,
\quad t\geq 0,
\end{aligned}
\end{equation}
with the initial wealth $X_0=x\geq \nu H_0/(r+\lam)$.
A control variable $p_t$ is then transformed to the bequest $b_t$, which is assumed to be $\mathbb{F}$-adapted. The control triple $(c, \pi, b)$ is said to be \textit{admissible} if $c$ is $\mathbb{F}$-predictable and satisfies the drawdown constraint $c_t\geq \nu H_t$ where $\nu\in(0,1)$,
$\pi$ is $\mathbb{F}$-progressively measurable,
and $(c,\pi,b)$ satisfies the integrability condition $\int_0^{\infty} (c_t+\pi_t^2+b_t)dt<\infty$ a.s. and the no bankruptcy condition $X_t\geq 0$ a.s. for $t\geq 0$. Let $\mathcal{A}(x, h)$ denote the set of admissible controls $(c, \pi, b)$.

{\red In addition, similar to the proof of Corollary 1 of \cite{Arun2020arXiv}, to ensure that the consumption drawdown constraint $c_t\geq \nu H_t$ is sustainable for all $t\geq 0$, the necessary condition is $X_t\geq \frac{\nu H_t}{r+\lam}$ a.s. for all $t\geq 0$. Therefore, from this point onwards, we will only consider the feasible domain $(x,h)\in [0,+\infty)\times [0,+\infty)$ such that $x\geq \frac{\nu h}{r+\lam}$ for the admissible set  $\mathcal{A}(x, h)$.}

\begin{remark}\label{remark: insurance_negative}
The optimal premium $p_t$ is not required to be positive.
The wage earner is allowed to purchase a special term pension annuity, and she can receive the premium $p_t$ from the insurance company at time $t$.
However, the wage earner should pay $p_t$ to the company if she dies at time $t$.
This situation is related to the reverse mortgage.
Interested readers may refer to Pirvu and Zhang \cite{PirvuZhang2012IME} for more discussions.
\end{remark}

\subsection{Shortfall Aversion Preference and Control Problem}
It is assumed in the present paper that the agent is shortfall averse on relative consumption in the sense that utility loses of spending cuts from a reference.
The reference process is chosen as the running maximum consumption process $H_t:=\max{\{h,\ \sup_{s\leq t} c_s\}}$, and $H_0=h\geq 0$ is the initial reference level.
We adopt the shortfall aversion preference proposed in \cite{GuasoniHubermanR2020MFF} on consumption and also consider the expected utility on bequest at the death time. The objective function of the control problem is defined by
\begin{equation}\label{eq: value}
\begin{aligned}
&\mathbb{E}\left[\int_0^{\tau} e^{-\rho t}U(c_t, H_t)dt + e^{-\rho\tau} V(b_\tau) \right] \\
=& \mathbb{E}\left[\int_0^{\infty} e^{-(\rho+\lam) t}U(c_t, H_t)dt + \lam\int_0^\infty e^{-(\rho+\lam)t} V(b_t)dt \right],
\end{aligned}
\end{equation}
where $U(c,h)$ is the so-called shortfall aversion preference that satisfies
$$
U(c,h) = \begin{cases}
\frac1{\gamma_1} \big(\frac{c}{h^\alpha}\big)^{\gamma_1}, &\mbox{if } \nu h \leq c < h, \\
\frac1{\gamma_1} \big(c^{1-\alpha}\big)^{\gamma_1}, &\mbox{if } c \geq h,
\end{cases}
$$
with $0<\gamma_1<1$, and $V(b)$ is a standard CRRA utility 
%
%
that
$$
V(b) = K\frac{b^{\gamma_2}}{\gamma_2},~ 0<\gamma_2<1, ~K>0,
$$
and $K$ stands for the bequest motive level. According to Figure \ref{pic: utility}, the utility function $U(c,h)$ has a kink at $c=h$.
\begin{figure}[htbp]
\centering
\includegraphics[width=2.5in]{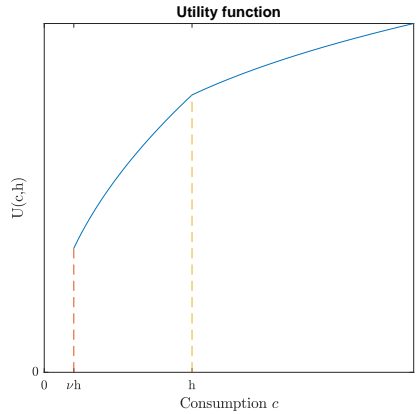}
\caption{Utility $U(c,h)$ for a consumption rate $c$, with reference point $h$}
\label{pic: utility}
\end{figure}

The agent aims to maximize the expected utility under shortfall aversion preference subjecting to a drawdown constraint on consumption control that
\begin{equation}\label{eq: problem}
\max\limits_{(c,\pi, b)\in\mathcal{A}(x,h)} \mathbb{E}\left[\int_0^{\infty} e^{-(\rho+\lam) t}U(c_t, H_t)dt + \lam\int_0^\infty e^{-(\rho+\lam)t} V(b_t)dt \right].
\end{equation}
For ease of presentation, it is assumed that the discount factor $\rho$ equals the risk-free rate $r$.
\section{Main Results}\label{sec: result}

\subsection{The HJB Equation}
For problem \eqref{eq: problem}, we can derive the auxiliary HJB variational inequality on the feasible domain $\{(x,h)\in [0,+\infty)\times [0,\infty): x\geq \frac{\nu h}{r+\lambda}\}$ using some heuristic arguments that

\begin{equation}\label{eq: HJB_eqn}
\begin{aligned}
\underset{c\in [\nu h,h], \pi\in\mathbb{R}, {\red{}b\geq 0}}{\sup}
\left[ -(r+\lam) u + u_x\left((r+\lam)x+\pi(\mu-r)-c-\lam b\right) + \frac{1}{2}\sigma^2\pi^2 u_{xx} + U(c,h)+\lam V(b) \right] &= 0,  \\
u_h(x,h)  & \leq  0,
\end{aligned}
\end{equation}
for $x\geq\frac{\nu h}{r+\lam}$ and $h \geq 0$.
The free boundary condition $u_h(x, h) = 0$ will be specified later.
Our goal is to find the optimal feedback control $c^*(x,h)$, $\pi^*(x,h)$, and $b^*(x,h)$.
If $u(x,\cdot)$ is $C^2$ in $x$, the first order condition gives the optimal portfolio and optimal bequest in feedback form that $\pi^{\ast}(x,h)=-\frac{\mu-r}{\sigma^2}\frac{u_x}{u_{xx}}$ and $b^*(x,h) = \big(\frac{u_x}{K}\big)^{\frac{1}{\gamma_2-1}}$, respectively.
The HJB variational inequality \eqref{eq: HJB_eqn} can be simplified to
\begin{equation} \label{eq: HJB_main}
\begin{aligned}
\sup_{c\in [\nu h,h]} \left[ U(c,h)- c u_x \right] - (r+\lam) u + (r+\lam)x u_x - \lam K^{-\frac{1}{\gamma_2-1}}\frac{1-\gamma_2}{\gamma_2}u_x^{\frac{\gamma_2}{\gamma_2-1}} -\frac{\kappa^2}{2}\frac{u_x^2}{u_{xx}} = 0, \\
 u_h\leq 0,\ \ \forall x\geq\frac{\nu h}{r+\lam}.
\end{aligned}
\end{equation}

\subsection{Some Heuristic Results}
We aim to solve the HJB variational inequality in the analytical form. In particular, we plan to characterize some thresholds (depending on $h$) for the wealth level $x$ such that the auxiliary value function, the optimal portfolio and consumption can be expressed analytically in each region.

Similar to Deng et al. \cite{DengLiPY2021FS} and Li et al. \cite{LiYuZ2021arXiv}, we can heuristically decompose the domain based on the first order condition with respect to $c$ and express the HJB equation \eqref{eq: HJB_main} piecewisely.
In particular, we have the following disjoint regions:

\emph{Region I}: on the set $\mathcal{R}_1 = \{(x,h) \in \mathbb{R}_+^2: x\geq\frac{\nu h}{r+\lam}, u_x > \nu^{\gamma_1-1}h^{(1-\alpha)\gamma_1-1}\},$
$U(c, h) - cu_x$ is decreasing in $c$ on $[\nu h,h]$, implying that $c^*=\nu h$.

\emph{Region II}: on the set
$\mathcal{R}_2 = \{(x,h)\in \mathbb{R}_+^2 : x\geq\frac{\nu h}{r+\lam}, h^{(1-\alpha)\gamma_1-1} \leq u_x\leq \nu^{\gamma_1-1}h^{(1-\alpha)\gamma_1-1}\},$
$U(c, h) - cu_x$ attains its maximum in $[\nu h ,h]$, implying that $c^*= h^{\frac{\alpha\gamma_1}{\gamma_1-1}}u_x^{\frac{1}{\gamma_1-1}}$.
%

\emph{Region III}: on the set $\mathcal{R}_3  = \{(x,h) \in \mathbb{R}_+^2 : x\geq\frac{\nu h}{r+\lam}, u_x < h^{(1-\alpha)\gamma_1-1}\}$,
$U(c, h) - cu_x$ is increasing in $c$ on $[\nu h,h]$, implying that $c^*=h$. To distinguish whether the optimal consumption $c_t^*$ updates the past maximum process $H_t^*$ in this region, we need to split \emph{Region III} in three subregions:

\emph{Region III-(i)}: on the set
$\mathcal{D}_1= \{(x,h) \in \mathbb{R}_+ \times \mathbb{R}_+ : x\geq \frac{\nu h}{r+\lam}, (1-\alpha)h^{(1-\alpha)\gamma_1-1} < u_x < h^{(1-\alpha)\gamma_1-1}\}$,
we have a contradiction that $\hat{c}(x)= \big(\frac{u_x(x,h)}{1-\alpha}\big)^{\frac{1}{(1-\alpha)\gamma_1-1}} < h$, and therefore $c_t^*$ is not a singular control.
We still need to follow the previous feedback form $c^*(x,h) = h$, in which $h$ is a previously attained maximum level.
The corresponding running maximum process remains flat at the instant time.
In this region, we only know that $u_h(x,h)\leq0$ as we have $dH_t = 0$.
%

\emph{Region III-(ii)}: on the set $D_2 := \{ (x,h)\in\mathbb{R}_+ \times \mathbb{R}_+ : x\geq\frac{\nu h}{r+\lam}, u_x = (1-\alpha)h^{(1-\alpha)\gamma_1-1}\}$, we get $\hat{c}(x) = \big(\frac{u_x(x,h)}{1-\alpha}\big)^{\frac{1}{(1-\alpha)\gamma_1-1}} = h$ and the feedback optimal consumption $c^*(x,h) = \big(\frac{u_x(x,h)}{1-\alpha}\big)^{\frac{1}{(1-\alpha)\gamma_1-1}}$.
This corresponds to the singular control $c_t^*$ that creates a new peak for the whole path and $H_t^* = c_t^* = \big(\frac{u_x(X_t^*,H_t^*)}{1-\alpha}\big)^{\frac{1}{(1-\alpha)\gamma_1-1}}$ is strictly increasing at the instant time so that $H_t^* > H_s^*$ for any $s<t$ and we must require the following free boundary condition that $u_h(x,h) = 0$.
In this region, it is noted that $c^*(x,h) = h$, therefore, the HJB equation follows the same PDE with in Region I but together with the new free boundary condition.

\emph{Region III-(iii)}: on the set $\mathcal{D}_3 := \{(x,h) \in\mathbb{R}_+ \times \mathbb{R}_+ : u_x(x,h) < (1-\alpha)h^{(1-\alpha)\gamma_1-1}\}$, we get $\hat{c}(x) = \big(\frac{u_x(x,h)}{1-\alpha}\big)^{\frac{1}{(1-\alpha)\gamma_1-1}} > h$.
This indicates that the initial reference level $h$ is below the feedback control $\hat{c}(x)$, and the optimal consumption is again a singular control $c^*(x) > h$, which creates a new consumption peak.
As the running maximum process $H_t^*$ is updated immediately by $c_t^*$, the feedback optimal consumption pulls the associated $H_{t-}^*$ upward from its original value to the new value in the direction of $h$ and $X_t^*$ remains the same, in which $u(x,h)$ is the solution of the HJB equation on the set $\mathcal{D}_2$.
This suggests that for any given initial value $(x,h)$ in the set $\mathcal{D}_3$, the feedback control $c^*(x,h)$ pushes the value function jumping immediately to the point $(x, \hat{h})$ on the boundary set $\mathcal{D}_2$ for the given level of $x$, where $\hat{h} = \big( \frac{u_x(x, \hat{h})}{1-\alpha} \big)^{\frac{1}{(1-\alpha)\gamma_1-1}}$.

Therefore, it is sufficient to consider the effective domain defined by
\begin{equation}\label{eq: effective domain}
\begin{aligned}
\mathcal{C} &:= \bigg\{(x,h) \in \mathbb{R}_+ \times \mathbb{R}_+ : x\geq \frac{\nu h}{r+\lam},
u_x(x,h) \geq (1-\alpha)h^{(1-\alpha)\gamma_1-1}\bigg\} \\
&= \mathcal{R}_1 \cup \mathcal{R}_2 \cup \mathcal{D}_1 \cup \mathcal{D}_2 \subset \mathbb{R}_+^2.
\end{aligned}\end{equation}
The only possibility for $(x,h) \in \mathcal{D}_3$ occurs at the initial time $t=0$. If $(X_0^*, H_0^*)$ starts from $\mathcal{C}$, then the controlled process $(X_t^*, H_t^*)$ always stay inside the region $\mathcal{C}$ and will either reflect at the boundary or move along the boundary $\mathcal{D}_2$ after visiting the boundary $\mathcal{D}_2$.
On the other hand, if the process $(X_0^*, H_0^*)$ starts from the value $(x,h)$ inside the region $\mathcal{D}_3$, the optimal control enforces an instant jump (and the only jump) of the process $H$ from $H_{0-} = h$ to $H_0 = \hat{h}$ on the boundary $\mathcal{D}_2$ and both processes $X_t$ and $H_t$ become continuous processes diffusing inside the effective domain $\mathcal{C}$ afterwards for $t<0$.

Therefore, the HJB variational inequality \eqref{eq: HJB_main} can be written as
\begin{equation}\label{eq: HJB_beta<1}
\begin{aligned}
- (r+\lam) u + (r+\lam)x u_x -\frac{\kappa^2}{2}\frac{u_x^2}{u_{xx}} = -\tV(u_x, h) , ~\mathrm{and}~ u_h\leq0, \\
u_h = 0, ~\mbox{if}~ u_x =(1-\alpha)h^{(1-\alpha)\gamma_1-1},
\end{aligned}
\end{equation}
where we define
\begin{equation}\label{eq: V_def}
\tV(q, h) := \begin{cases}
\lam K^{-\frac{1}{\gamma_2-1}}\frac{1-\gamma_2}{\gamma_2}q^{\frac{\gamma_2}{\gamma_2-1}} + \frac{\nu^{\gamma_1}}{\gamma}h^{(1-\alpha)\gamma_1} - \nu h q, & \mbox{if } q > \nu^{\gamma_1-1}h^{(1-\alpha)\gamma_1-1},  \\
\lam K^{-\frac{1}{\gamma_2-1}}\frac{1-\gamma_2}{\gamma_2}q^{\frac{\gamma_2}{\gamma_2-1}} + \frac{1-\gamma_1}{\gamma_1}h^{\frac{\alpha\gamma_1}{\gamma_1-1}}q^{\frac{\gamma_1}{\gamma_1-1}} , & \mbox{if } h^{(1-\alpha)\gamma_1-1} \leq q \leq \nu^{\gamma_1-1}h^{(1-\alpha)\gamma_1-1}, \\
\lam K^{-\frac{1}{\gamma_2-1}}\frac{1-\gamma_2}{\gamma_2}q^{\frac{\gamma_2}{\gamma_2-1}} + \frac{1}{\gamma_1}h^{(1-\alpha)\gamma_1} - hq, & \mbox{if } (1-\alpha)h^{(1-\alpha)\gamma_1-1} \leq q < h^{(1-\alpha)\gamma_1-1}.
\end{cases}
\end{equation}
To solve the equation, some boundary conditions are needed.
First, to guarantee the desired global regularity of the solution, we need to impose the smooth-fit condition along two free boundaries such that $u_x(x,h) = \nu^{\gamma_1-1}h^{(1-\alpha)\gamma_1-1}$ and $u_x(x,h) = h^{(1-\alpha)\gamma_1-1}$.
Next, note that if we start with initial wealth $x = \frac{\nu h}{r+\lam}$, to confront the risk of bankruptcy,
the optimal investment $\pi^*(x) = -\frac{\mu-r}{\sigma^2} \frac{u_x}{u_{xx}}$ should always be 0.
The wealth level will never change as there is no trading strategy, the consumption rate should also be $c_t = \nu h$, and the optimal bequest should also be 0 all the time.
Therefore, we can conclude that
\begin{equation}\label{eq: boundary_x->0}
\lim\limits_{x\rightarrow \frac{\nu h}{r+\lam}} \frac{u_x(x,h)}{u_{xx}(x,h)} = 0
~~\mathrm{and}~~
\lim\limits_{x\rightarrow \frac{\nu h}{r+\lam}} u(x, h) =  \int_0^{+\infty} e^{-(r+\lam)t} \frac1{\gamma_1} \bigg(\frac{\nu h}{h^\alpha}\bigg)^{\gamma_1}dt = \frac{\nu^{\gamma_1}}{(r+\lam)\gamma_1}h^{(1-\alpha)\gamma_1}.
\end{equation}
On the other hand, when the initial wealth tends to infinity, one can consume as much as possible that leads to the infinitely large consumption rate and bequest.
A small variation of initial wealth will only lead to a negligible change of the value function.
In addition, the optimal consumption rate should be proportional to the wealth level on region $\mathcal{D}_2$.
It follows that
\begin{equation}\label{eq: boundary_x->infty}
\lim\limits_{x\rightarrow+\infty} u_x(x,h) = 0,
~~\mathrm{and}~~
\lim\limits_{x\rightarrow+\infty,~ (x,h)\in\mathcal{D}_2} \frac{h}{x} = C_\infty,
\end{equation}
where $C_\infty >0$ is a constant.
See Corollary \ref{cor: asy_infty_wealth} for the verification of the last boundary condition .

To tackle the nonlinear HJB equation \eqref{eq: HJB_beta<1}, we employ the dual transform only with respect to the variable $x$ and treat the variable $h$ as a parameter; see similar dual transform arguments in Bo et al. \cite{BoLiY2021SiCon}, Deng et al. \cite{DengLiPY2021FS} and Li et al. \cite{LiYuZ2021arXiv}.
That is, we consider $v(y,h) := \sup_{x\geq\frac{\nu h}{r+\lam}}\{u(x,h) - xy\}$, $y\geq{\red(1-\alpha)h^{(1-\alpha)\gamma_1-1}}$.
For a given $(x,h)\in\mathcal{C}$, let us define the variable $y = u_x(x,h)$ and it holds that $u(x,h) = v(y,h) + xy$.
We can further deduce that
$$
x = -v_y(y,h), ~u(x,h) = v(y,h) - yv_y(y,h)~ \mathrm{and}~ u_{xx}(x,h) = -\frac{1}{v_{yy}(y,h)}.
$$
The nonlinear equation \eqref{eq: HJB_beta<1} can be reduced to
\begin{equation}\label{eq: LODE_beta<1}
\frac{\kappa^2}{2}y^2 v_{yy} - (r+\lam)v =  -\tV(y,h),
\end{equation}
where $\tV(\cdot,\cdot)$ is defined in \eqref{eq: V_def},
and the free boundary condition is transformed to the point $y = (1-\alpha)h^{(1-\alpha)\gamma_1-1}$.
As $h$ can be regarded as a parameter, we can study the above equation as the ODE problem of the variable $y$.
Based on the dual transform, the boundary conditions \eqref{eq: boundary_x->infty}  can be written as
\begin{equation}\label{eq: boundary_dual_y->0}
\lim\limits_{y\rightarrow0} v_y(y,h) = -\infty,  ~~ \mathrm{and} ~~
\lim\limits_{h\rightarrow\infty} \frac{h}{v_y(y,h)} = -C_\infty,
\end{equation}
on free boundary $y = (1-\alpha)h^{(1-\alpha)\gamma_1-1}$.
The boundary condition \eqref{eq: boundary_x->0} is equivalent to
\begin{equation}\label{eq: boundary_dual_y->infty}
yv_{yy}(y,h) \rightarrow 0
~~\mathrm{and}~~
v(y,h) - yv_{y}(y,h) \rightarrow \frac{\nu^{\gamma_1}}{(r+\lam)\gamma_1}h^{(1-\alpha)\gamma_1}
~~\mathrm{as}~~
v_y(y,h) \rightarrow -\frac{\nu h}{r+\lam}.
\end{equation}
It holds by the dual transform that $v_y(y,h) = -x$, and one can derive that
$u_h(x,h)
= v_h(y,h) + (v_y(y,h) + x)\frac{dy(h)}{dh}
= v_h(y,h)$. The free boundary condition \eqref{eq: HJB_beta<1} is written by
\begin{equation}\label{eq: free_boundary_dual}
 v_h(y,h)  = 0 ~~\mathrm{as}~~ y = (1-\alpha)h^{(1-\alpha)\gamma_1-1}.
\end{equation}

In particular, to facilitate some mathematical arguments, we need to impose the following technical assumption on model parameters. This assumption is needed in deriving the explicit form of coefficient functions $C_i(h)$, $i=1,...,6$, in Proposition \ref{prop: dual_solution_beta<1} below. It is also needed in the proof of Lemma \ref{lemma: beta<1} when we verify that the obtained solution $v(y,h)$ is  convex in the variable $y$ and in the proof of the verification theorem on optimality.\\

\noindent \textbf{Assumption (A1)} $\gamma_2 \leq (1-\alpha)\gamma_1 < -\frac{r_2}{r_1}\neq \gamma_1$, where $r_1>1$ and $r_2<0$ are two solutions to the equation $\eta^2 - \eta - \frac{2(r+\lam)}{\kappa^2} = 0$.

\begin{proposition}\label{prop: dual_solution_beta<1}
Under \textbf{Assumption (A1)}, boundary conditions \eqref{eq: boundary_dual_y->0}, \eqref{eq: boundary_dual_y->infty}, the free boundary condition \eqref{eq: free_boundary_dual},
and the smooth-fit conditions with respect to $y$ at free boundary points $y = \nu^{\gamma_1-1}h^{(1-\alpha)\gamma_1-1}$ and $y = h^{(1-\alpha)\gamma_1-1}$,
the ODE \eqref{eq: LODE_beta<1} in the domain $\{y\in\mathbb{R}: y\geq (1-\alpha)h^{(1-\alpha)\gamma_1-1}\}$ admits the unique solution given explicitly by
\begin{equation}\label{eq: dual_solu_beta<1}
v(y,h) = \begin{cases}
\begin{aligned}
&C_2(h)y^{r_2} + \frac{2\lam K^{1-\beta_2}}{\kappa^2\beta_2(\beta_2-r_1)(\beta_2-r_2)}y^{\beta_2}  \\
&+ \frac{\nu^{\gamma_1}}{(r+\lam)\gamma_1}h^{(1-\alpha)\gamma_1} - \frac{\nu h}{r+\lam} y,
\end{aligned}
& \mbox{if } y > \nu^{\gamma_1-1}h^{(1-\alpha)\gamma_1-1},   \\
\begin{aligned}
&C_3(h)y^{r_1} + C_4(h)y^{r_2} + \frac{2\lam K^{1-\beta_2}}{\kappa^2\beta_2(\beta_2-r_1)(\beta_2-r_2)}y^{\beta_2} \\
&+ \frac{2h^{\alpha\beta_1}}{\kappa^2 \beta_1(\beta_1-r_1)(\beta_1-r_2)}y^{\beta_1},
\end{aligned}
& \mbox{if } h^{(1-\alpha)\gamma_1-1} \leq y \leq \nu^{\gamma_1-1}h^{(1-\alpha)\gamma_1-1}, \\
\begin{aligned}
&C_5(h)y^{r_1} + C_6(h)y^{r_2} + \frac{2\lam K^{1-\beta_2}}{\kappa^2\beta_2(\beta_2-r_1)(\beta_2-r_2)}y^{\beta_2}  \\
&+ \frac{1}{(r+\lam)\gamma_1}h^{(1-\alpha)\gamma_1} - \frac{h}{r+\lam} y,
\end{aligned}
& \mbox{if } (1-\alpha)h^{(1-\alpha)\gamma_1-1} \leq y < h^{(1-\alpha)\gamma_1-1},  \\
\end{cases}
\end{equation}
where {\red$\beta_1 = \frac{\gamma_1}{\gamma_1-1}$, $\beta_2 = \frac{\gamma_2}{\gamma_2-1}$}, and functions $C_2(h), C_3(h), \cdots, C_6(h)$ are given by
\begin{equation}\label{eq: C_def}
\begin{aligned}
C_2(h)  &= C_4(h) + \frac{1-\beta_1}{(r+\lam)(r_1-r_2)(\beta_1-r_2)}\nu^{r_1\gamma_1+r_2}h^{r_1(1-\alpha)\gamma_1+r_2},\\
C_3(h) &= \frac{1-\beta_1}{(r+\lam)(r_1-r_2)(\beta_1-r_1)}\nu^{r_2\gamma_1+r_1}h^{r_2(1-\alpha)\gamma_1+r_1},\\
C_4(h)  &=  C_6(h) + \frac{\beta_1-1}{(r+\lam)(r_1-r_2)(\beta_1-r_2)}h^{r_1(1-\alpha)\gamma_1+r_2},\\
C_5(h) &= C_3(h) - \frac{1-\beta_1}{(r+\lam)(r_1-r_2)(\beta_1-r_1)}h^{r_2(1-\alpha)\gamma_1+r_1},\\
C_6(h) &= \frac{(1-\alpha)^{r_1-r_2}(1-\beta_1)(r_2(1-\alpha)\gamma_1+r_1)}{(r+\lam)(r_1-r_2)(\beta_1-r_1)(r_1(1-\alpha)\gamma_1+r_2)}(1-\nu^{r_2\gamma_1+r_1})h^{r_1(1-\alpha)\gamma_1+r_2}
\end{aligned}
\end{equation}
where $r_1>1$ and $r_2<0$ are two roots to the quadratic equation $\eta^2 - \eta - \frac{2(r+\lam)}{\kappa^2} = 0$.
\end{proposition}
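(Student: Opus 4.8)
\emph{Proof plan.} For each fixed $h$, equation \eqref{eq: LODE_beta<1} is a second-order linear Cauchy--Euler ODE in $y$ whose right-hand side $-\tV(\cdot,h)$ is piecewise defined on the three subintervals of \eqref{eq: V_def}. The plan is to solve it region by region, glue the three pieces by smooth fit, and then pin down the integration ``constants'' $C_i(h)$ through the growth, boundary, and free-boundary conditions. First I would record that substituting $v=y^{\eta}$ into the homogeneous equation $\frac{\kappa^2}{2}y^2v_{yy}-(r+\lam)v=0$ yields the indicial equation $\eta^2-\eta-\frac{2(r+\lam)}{\kappa^2}=0$, whose roots are precisely $r_1>1$ and $r_2<0$; hence on every subinterval the homogeneous solution is $\mathrm{span}\{y^{r_1},y^{r_2}\}$, and by Vieta's formulas $r_1+r_2=1$ and $r_1r_2=-\frac{2(r+\lam)}{\kappa^2}$. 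The relation $r_1+r_2=1$ is what will later make the power-matching in $h$ work out.

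Next I would construct the particular solutions. As a function of $y$, each summand of $\tV$ is either a power $y^{\beta_1}$ or $y^{\beta_2}$, a constant in $y$ (the $h^{(1-\alpha)\gamma_1}$ terms), or linear in $y$. Using the identity $\frac{\kappa^2}{2}\beta(\beta-1)-(r+\lam)=\frac{\kappa^2}{2}(\beta-r_1)(\beta-r_2)$, a source $y^{\beta}$ produces a particular term with coefficient $\frac{2}{\kappa^2(\beta-r_1)(\beta-r_2)}$ times the source coefficient; together with the elementary identities $\frac{1-\gamma_i}{\gamma_i}=-\frac{1}{\beta_i}$ and $K^{-1/(\gamma_2-1)}=K^{1-\beta_2}$ this reproduces the $y^{\beta_2}$ and $y^{\beta_1}$ coefficients of \eqref{eq: dual_solu_beta<1}, while the constant and linear sources give the explicit $\frac{\nu^{\gamma_1}}{(r+\lam)\gamma_1}h^{(1-\alpha)\gamma_1}-\frac{\nu h}{r+\lam}y$ and $\frac{1}{(r+\lam)\gamma_1}h^{(1-\alpha)\gamma_1}-\frac{h}{r+\lam}y$ pieces directly. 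Here Assumption (A1) enters to keep these finite: the clause $-r_2/r_1\neq\gamma_1$ is equivalent to $\beta_1\neq r_2$, the bound $\gamma_2\le(1-\alpha)\gamma_1<-r_2/r_1$ gives $\beta_2\neq r_2$, and $\beta_i<0<r_1$ handles the remaining factor, so neither denominator $(\beta_i-r_1)(\beta_i-r_2)$ vanishes.

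I would then fix the six raw constants (two per region) by the side conditions. The growth condition \eqref{eq: boundary_dual_y->infty} as $y\to\infty$ (equivalently $x\downarrow\frac{\nu h}{r+\lam}$) rules out the $y^{r_1}$ term in the top region since $r_1>1$, so its coefficient must vanish and only $C_2(h)y^{r_2}$ survives there; the rest of \eqref{eq: boundary_dual_y->infty} is then automatic, as the remaining $y^{r_2}$ and $y^{\beta_2}$ terms decay at infinity. This leaves the five unknowns $C_2,\dots,C_6$. Imposing $C^1$ smooth fit (continuity of $v$ and of $v_y$) at the two interfaces $y_1=\nu^{\gamma_1-1}h^{(1-\alpha)\gamma_1-1}$ and $y_2=h^{(1-\alpha)\gamma_1-1}$ gives four equations, and the free-boundary condition \eqref{eq: free_boundary_dual} supplies the fifth. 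A useful preliminary is to verify, by direct substitution using $\beta_i(\gamma_i-1)=\gamma_i$, that $\tV$ and $\tV_y$ are continuous across $y_1$ and $y_2$; this makes the particular parts largely cancel in the smooth-fit relations and, in the free-boundary condition, makes the explicit $h$-terms cancel at $y_3=(1-\alpha)h^{(1-\alpha)\gamma_1-1}$, so that \eqref{eq: free_boundary_dual} collapses to $C_5'(h)\,y_3^{r_1}+C_6'(h)\,y_3^{r_2}=0$.

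Finally I would solve the resulting system. Because the three abscissae $y_1,y_2,y_3$ are all constant multiples of $h^{(1-\alpha)\gamma_1-1}$ and $r_1+r_2=1$, homogeneity forces each $C_i(h)$ to be a monomial in $h$ whose exponent is read off by matching powers (for instance $C_3\propto h^{r_2(1-\alpha)\gamma_1+r_1}$ via $1-r_1=r_2$); under this monomial ansatz the two $h$-powers in the free-boundary relation coincide, so all five conditions become a single linear algebraic system for the numerical prefactors. Solving it—most cleanly by first determining $C_3$ and $C_6$ and then back-substituting to obtain $C_4,C_2$ and $C_5$ in the recursive form of \eqref{eq: C_def}—yields the stated coefficients, with the denominators $r_1(1-\alpha)\gamma_1+r_2$ and $\beta_1-r_{1,2}$ nonzero precisely by (A1), which also makes the coefficient matrix nonsingular and hence the solution unique. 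I expect the main obstacle to be exactly this coupled bookkeeping: keeping the powers of $h$ and $\nu$ consistent across the three regions and checking that the free-boundary condition is \emph{compatible} with the four smooth-fit equations, i.e. that the seemingly over-constrained gluing is in fact consistent and singles out the unique solution \eqref{eq: dual_solu_beta<1}.
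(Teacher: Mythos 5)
Your overall architecture (Cauchy--Euler roots $r_1,r_2$ of the indicial equation, particular solutions via $\tfrac{\kappa^2}{2}\beta(\beta-1)-(r+\lam)=\tfrac{\kappa^2}{2}(\beta-r_1)(\beta-r_2)$, killing the $y^{r_1}$ mode in the top region by the $y\to\infty$ behaviour, and $C^1$ smooth fit at the two interfaces yielding the four combinations $C_3$, $C_2-C_4$, $C_3-C_5$, $C_4-C_6$) is exactly the paper's. The gap is in your final counting step. The four smooth-fit equations determine only those four combinations, leaving one scalar degree of freedom, say the choice of $C_6(h)$. Your ``fifth equation'' --- the free-boundary condition \eqref{eq: free_boundary_dual} --- is not algebraic: as you yourself write, it collapses to $C_5'(h)y_3^{r_1}+C_6'(h)y_3^{r_2}=0$, a first-order ODE in $h$, which determines $C_6$ only up to an additive constant $c_0$. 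That constant is \emph{not} excluded by your homogeneity/monomial ansatz: the problem has no exact scaling symmetry when $\gamma_2<(1-\alpha)\gamma_1$ (the $y^{\beta_2}$ bequest terms carry no $h$), and in any case $c_0=c_0h^0$ is itself a monomial. Moreover the shift $C_6\mapsto C_6+c_0$ (hence $C_4\mapsto C_4+c_0$, $C_2\mapsto C_2+c_0$) is compatible with all four smooth-fit relations, with the free-boundary ODE, and with the $y\to\infty$ conditions \eqref{eq: boundary_dual_y->infty}, since $r_2<0$ makes the extra $c_0y^{r_2}$ and its derivatives vanish there. So the system you set up is genuinely underdetermined by one parameter, and the monomial ansatz silently discards it without justification.

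The missing ingredient is the boundary condition \eqref{eq: boundary_dual_y->0}, which you list in your preamble but never actually invoke. The paper evaluates $v_y$ along the free boundary $y_3(h)=(1-\alpha)h^{(1-\alpha)\gamma_1-1}$ as $h\to\infty$ (equivalently $y\to0$); the requirement $\lim_{h\to\infty}h/v_y(y_3(h),h)=-C_\infty$ forces $v_y(y_3(h),h)=O(h)$ and hence $C_6(h)=O(h^{r_1(1-\alpha)\gamma_1+r_2})$, and Assumption (A1) (namely $(1-\alpha)\gamma_1<-r_2/r_1$) makes this exponent negative, so $\lim_{h\to\infty}C_6(h)=0$ and $c_0=0$. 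One then writes $C_6(h)=-\int_h^\infty C_6'(s)\,ds$ and integrates the free-boundary ODE (with $C_5'$ known from the smooth-fit output) to obtain the stated coefficient. With this step inserted, your argument closes and coincides with the paper's proof.
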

{\red
\begin{proof}
It is easy to show that the general solution of the linear ODE \eqref{eq: LODE_beta<1} admits the piecewise form in each region that
\begin{equation}\label{eq: gen_solu_dual_beta<1}
v(y,h) = \begin{cases}
C_1(h)y^{r_1} + C_2(h)y^{r_2} + \frac{(\nu h)^{\gamma_1}}{(r+\lam)\gamma_1 h^{\alpha\gamma_1}} - \frac{\nu h}{r+\lam}y, & \mbox{if }  > \nu^{\gamma_1-1}h^{(1-\alpha)\gamma_1-1} \\
C_3(h)y^{r_1} + C_4(h)y^{r_2} + \frac{2h^{\alpha\beta_1}}{\kappa^2\beta_1(\beta_1-r_1)(\beta_1-r_2)}y^{\beta_1}, & \mbox{if } h^{(1-\alpha)\gamma_1-1} \leq y \leq \nu^{\gamma_1-1}h^{(1-\alpha)\gamma_1-1}, \\
C_5(h)y^{r_1} + C_6(h)y^{r_2} + \frac{1}{(r+\lam)\gamma_1}h^{(1-\alpha)\gamma} - \frac{h}{r+\lam}y, & \mbox{if } (1-\alpha)h^{(1-\alpha)\gamma_1-1} \leq y < h^{(1-\alpha)\gamma_1-1},  \\
\end{cases}
\end{equation}
where $C_1(\cdot), \cdots, C_6(\cdot)$ are functions of $h$ to be determined.

The free boundary condition $v_y(y,h)\rightarrow-\frac{\nu h}{r+\lam}$ in \eqref{eq: boundary_dual_y->infty} implies that $y\rightarrow+\infty$.
Together with free boundary conditions in \eqref{eq: boundary_dual_y->infty} and the formula of $v(y,h)$ in the region $y> \nu^{\gamma_1-1}h^{(1-\alpha)\gamma_1-1}$, we deduce $C_1(h)\equiv 0$. To determine the left parameters, we consider the smooth-fit conditions with respect to the variable $y$ at two free boundary points $y = y_1(h) = \nu^{\gamma_1-1}h^{(1-\alpha)\gamma_1-1}$ and $y = y_2(h) = h^{(1-\alpha)\gamma-1}$, that is,
\begin{equation}\label{eq: dual_smooth_beta<1}
\begin{aligned}
&-C_3(h)y_1(h)^{r_1} + (C_2(h)-C_4(h))y_1(h)^{r_2} \\
&= \frac{2h^{\alpha\beta_1}}{\kappa^2\beta_1(\beta_1-r_1)(\beta_1-r_2)}y_1(h)^{\beta_1} + \frac{\nu h}{r+\lam}y_1(h) - \frac{(\nu h)^\gamma}{(r+\lam)\gamma h^{\alpha\gamma}}, \\
&-r_1C_3(h)y_1(h)^{r_1-1} + r_2(C_2(h)-C_4(h))y_1(h)^{r_2-1} \\
&= \frac{2h^{\alpha\beta_1}}{\kappa^2(\beta_1-r_1)(\beta_1-r_2)}y_1(h)^{\beta_1-1} + \frac{\nu h}{r+\lam}, \\
&(C_3(h)-C_5(h))y_2(h)^{r_1} + (C_4(h)-C_6(h))y_2(h)^{r_2}  \\
&= -\frac{2h^{\alpha\beta_1}}{\kappa^2\beta_1(\beta_1-r_1)(\beta_1-r_2)}y_2(h)^{\beta_1} + \frac{1}{(r+\lam)\gamma}h^{(1-\alpha)\gamma} - \frac{h}{r+\lam}y_2(h) , \\
& r_1(C_3(h)-C_5(h))y_2(h)^{r_1-1} + r_2(C_4(h)-C_6(h))y_2(h)^{r_2-1} \\
&= -\frac{2h^{\alpha\beta_1}}{\kappa^2(\beta_1-r_1)(\beta_1-r_2)}y_2(h)^{\beta_1-1} - \frac{h}{r+\lam}.
\end{aligned}
\end{equation}
Then the equations \eqref{eq: dual_smooth_beta<1} are linear equations for $C_3(h)$, $C_2(h) - C_4(h)$, and $C_3(h)-C_5(h)$ and $C_4(h) - C_6(h)$.
By solving the above two systems, we can obtain
\begin{equation}\label{eq: solu_dual_smooth_beta<1}
\begin{aligned}
C_3(h) &= \frac{1-\beta_1}{(r+\lam)(r_1-r_2)(\beta_1-r_1)}(\nu h)^{r_2\gamma_1+r_1}h^{-r_2\alpha\gamma_1},\\
C_2(h) - C_4(h) &= \frac{1-\beta_1}{(r+\lam)(r_1-r_2)(\beta_1-r_2)}(\nu h)^{r_1\gamma_1+r_2}h^{-r_1\alpha\gamma_1},\\
C_3(h) - C_5(h) &= \frac{1-\beta_1}{(r+\lam)(r_1-r_2)(\beta_1-r_1)}h^{r_2(1-\alpha)\gamma_1+r_1},\\
C_4(h) - C_6(h) &= \frac{\beta_1-1}{(r+\lam)(r_1-r_2)(\beta_1-r_2)}h^{r_1(1-\alpha)\gamma_1+r_2},
\end{aligned}
\end{equation}
therefore, $C_2(h)$ to $C_5(h)$ can be written by \eqref{eq: dual_solu_beta<1}.

To obtain $C_2(h),C_4(h)$ and $C_6(h)$, we aim to find $C_6(h)$ first, then $C_4(h)$ and $C_2(h)$ can be determined.
Indeed, as $h\rightarrow+\infty$, we get $y\rightarrow0$ in the region $(1-\alpha)h^{(1-\alpha)\gamma_1-1} \leq y < h^{(1-\alpha)\gamma_1-1}$, and the boundary condition \eqref{eq: boundary_dual_y->0} leads to
$$
\lim\limits_{h\rightarrow+\infty }
\frac{h}{v_y((1-\alpha)h^{(1-\alpha)\gamma_1-1},h)} = C,
$$
where $C$ is a negative constant.
Along the free boundary, we have
$$
v_y((1-\alpha)h^{(1-\alpha)\gamma_1-1},h) = r_1C_5(h)\big((1-\alpha)h^{(1-\alpha)\gamma_1-1}\big)^{r_1-1} + r_2C_6(h)\big((1-\alpha)h^{(1-\alpha)\gamma_1-1}\big)^{r_2-1} + \frac{h}{r+\lam}.
$$
It follows from $\lim\limits_{h\rightarrow+\infty }
\frac{h}{v_y((1-\alpha)h^{(1-\alpha)\gamma-1},h)} < 0$ that $v_y((1-\alpha)h^{(1-\alpha)\gamma-1}, h) = O(h)$ as $h\rightarrow+\infty$.
Therefore, we can deduce that
$$
C_6(h) = O(C_5(h)h^{(r_1-r_2)((1-\alpha)\gamma_1-1)}) + O(h^{r_1(1-\alpha)\gamma_1+r_2}).
$$
From the asymptotic property of $C_5(h)$ in Lemma \ref{lemma: C_order}, it follows that
$$\begin{aligned}
C_6(h) &= O(C_5(h)h^{(r_1-r_2)((1-\alpha)\gamma_1-1)}) + O(h^{r_1(1-\alpha)\gamma_1+r_2})= O(h^{r_1(1-\alpha)\gamma_1+r_2}),
\end{aligned}$$
as $h\rightarrow+\infty$.
By \textbf{Assumption (A1)}, we have $\lim\limits_{h\rightarrow+\infty} C_6(h) = 0$, and thus we have $C_6(h) = -\int_h^\infty C_6'(s)ds$.

In addition, to obtain $C_6'(h)$, we apply the free boundary condition \eqref{eq: free_boundary_dual} at point $y = (1-\alpha)h^{(1-\alpha)\gamma_1-1}$ that
$$
C'_5(h)\big((1-\alpha)h^{(1-\alpha)\gamma_1-1}\big)^{r_1} + C'_6(h)\big((1-\alpha)h^{(1-\alpha)\gamma_1-1}\big)^{r_2} + \frac{1-\alpha}{r+\lam}h^{(1-\alpha)\gamma_1-1} - \frac{1-\alpha}{r+\lam}h^{(1-\alpha)\gamma_1-1} = 0,
$$
which yields that
$$
\begin{aligned}
C'_6(h)
&= -(1-\alpha)^{r_1-r_2} C'_5(h) h^{(r_1-r_2)((1-\alpha)\gamma_1-1)} \\
&= \frac{(1-\alpha)^{r_1-r_2}(1-\beta_1)(r_2(1-\alpha)\gamma_1+r_1)}{(r+\lam)(r_1-r_2)(\beta_1-r_1)}(1-\nu^{r_2\gamma_1+r_1})h^{r_1((1-\alpha)\gamma_1-1)}
.
\end{aligned}
$$
As a result, we conclude that
$$
C_6(h) = -\int_h^\infty C'_6(s)ds = \frac{(1-\alpha)^{r_1-r_2}(1-\beta_1)(r_2(1-\alpha)\gamma_1+r_1)}{(r+\lam)(r_1-r_2)(\beta_1-r_1)(r_1(1-\alpha)\gamma_1+r_2)}(1-\nu^{r_2\gamma_1+r_1})h^{r_1(1-\alpha)\gamma_1+r_2}.
$$
\end{proof}
}

\begin{theorem}[Verification Theorem]\label{thm: beta<1}
Let $(x,h) \in \mathcal{C}$, $h\in\mathbb{R}$ and $0<\lam <1$, where $x\geq0$ stands for the initial wealth, $h\geq0$ is the initial reference level, and $\mathcal{C}$ stands for the effective domain \eqref{eq: effective domain}.
For $(y,h)\in\{(y,h)\in\mathbb{R}_+^2: y \geq  {\red (1-\alpha)h^{(1-\alpha)\gamma_1-1}}\}$, let us define the feedback functions that
\begin{equation}\label{eq: c_y_beta<1}
c^\dag(y,h) = \begin{cases}
\nu h , & \mbox{if } y > \nu^{\gamma_1-1}h^{(1-\alpha)\gamma_1-1},  \\
h^{\frac{\alpha\gamma_1}{\gamma_1-1}}u_x^{\frac{1}{\gamma_1-1}}, & \mbox{if } h^{(1-\alpha)\gamma_1-1} \leq y \leq \nu^{\gamma_1-1}h^{(1-\alpha)\gamma_1-1}, \\
h, & \mbox{if } (1-\alpha)h^{(1-\alpha)\gamma_1-1} < y < h^{(1-\alpha)\gamma_1-1}, \\
\big(\frac{y}{1-\alpha}\big)^{\frac{1}{(1-\alpha)\gamma_1-1}}, & \mbox{if } y = (1-\alpha)h^{(1-\alpha)\gamma_1-1},
\end{cases}
\end{equation}
\begin{equation}\label{eq: pi_y_beta<1}
\begin{aligned}
&\pi^\dag(y,h) = \frac{\mu-r}{\sigma^2}y v_{yy}(y,h) \\
= & \frac{\mu - r}{\sigma^2}
\begin{cases}
\frac{2(r+\lam)}{\kappa^2}C_2(h)y^{r_2-1} + \frac{2\lam K^{1-\beta_2}(\beta_2-1)}{\kappa^2(\beta_2-r_1)(\beta_2-r_2)}y^{\beta_2-1}, & \mbox{if } y > \nu^{\gamma_1-1}h^{(1-\alpha)\gamma_1-1}, \\
\begin{aligned}
 &\frac{2(r+\lam)}{\kappa^2}C_3(h)y^{r_1-1} +  \frac{2(r+\lam)}{\kappa^2}C_4(h)y^{r_2-1} \\
&+ \frac{2\lam K^{1-\beta_2}(\beta_2-1)}{\kappa^2(\beta_2-r_1)(\beta_2-r_2)}y^{\beta_2-1}\\
&+ \frac{2(\beta_1-1)h^{\alpha\gamma_1}}{\kappa^2(\beta_1-r_1)(\beta_1-r_2)}y^{\beta_1-1},
\end{aligned}
& \mbox{if } h^{(1-\alpha)\gamma_1-1} \leq y \leq \nu^{\gamma_1-1}h^{(1-\alpha)\gamma_1-1},\\
\begin{aligned}
&\frac{2(r+\lam)}{\kappa^2}C_5(h)y^{r_1-1} +  \frac{2(r+\lam)}{\kappa^2}C_6(h)y^{r_2-1} \\
&+ \frac{2\lam K^{1-\beta_2}(\beta_2-1)}{\kappa^2(\beta_2-r_1)(\beta_2-r_2)}y^{\beta_2-1} ,
\end{aligned}
& \mbox{if } (1-\alpha)h^{(1-\alpha)\gamma_1-1} \leq y < h^{(1-\alpha)\gamma_1-1},
\end{cases}
\end{aligned}
\end{equation}
and
\begin{equation}
b^\dag(y,h) = \bigg(\frac{y}{K}\bigg)^{\frac{1}{\gamma_2-1}}.
\end{equation}
We consider the process $Y_t(y) := y e^{(r+\lam)t}M_t$, where $M_t := e^{-(r+\lam+\frac{\kappa^2}{2})t - \kappa W_t}$ is the discounted rate state price density process, and $y^* = y^*(x,h)$ is the unique solution to the budget constraint $\mathbb{E}[ \int_0^\infty (c^\dag(Y_t(y), H_t^\dag(y)) + \lam b^\dag(Y_t(y), H_t^\dag(y)))$$M_tdt ] = x$, where
$$
H_t^\dag(y) := h\vee \sup\limits_{s\leq t} c^\dag(Y_s(y), H_s^\dag(y))
=h\vee \left(\inf\limits_{s\leq t}Y_s(y)/(1-\alpha)\right)^{\frac{1}{(1-\alpha)\gamma_1-1}},
$$
is the optimal reference process corresponding to any fixed $y>0$.
The value function $u(x,h)$ can be attained by employing the optimal consumption and portfolio strategies in the feedback form that $c_t^* = c^\dag(Y_t^*, H_t^*)$ and $\pi_t^* = \pi^\dag(Y_t^*, H_t^*)$ for all $t\geq0$, where $Y_t^*:=Y_t(y^*)$ and $H_t^*=H_t^\dag(y^*)$.

The process $H_t^*$ is strictly increasing if and only if $Y_t^*=(1-\alpha){H_t^*}^{(1-\alpha)\gamma_1-1}$.
If we have $y^*(x,h) < (1-\alpha)h^{(1-\alpha)\gamma_1-1}$ at the initial time, the optimal consumption creates a new peak and brings $H_{0-}^* = h$ jumping immediately to a higher level $H_0^* = \big(\frac{y^*(x,h)}{1-\alpha}\big)^{\frac{1}{(1-\alpha)\gamma_1-1}}$ such that $t=0$ becomes the only jump time of $H_t^*$.
\end{theorem}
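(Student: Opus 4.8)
The plan is to combine a classical PDE verification with the convex-dual structure already built into the statement. First I would record the properties of the candidate value function $u$ obtained by inverting the Legendre transform of the solution $v$ from Proposition \ref{prop: dual_solution_beta<1}: since $v(\cdot,h)$ is strictly convex in $y$ (Lemma \ref{lemma: beta<1}), the relation $x=-v_y(y,h)$ defines a strictly decreasing bijection $y\mapsto x$ on the effective domain $\mathcal{C}$, so that $u(x,h):=v(y,h)+xy$ is well defined, strictly increasing and concave in $x$, and $C^2$ in $x$, with $u_x(x,h)=y$ and $u_{xx}=-1/v_{yy}$. The smooth-fit conditions imposed in Proposition \ref{prop: dual_solution_beta<1} guarantee that $u$ is a classical solution of the variational inequality \eqref{eq: HJB_beta<1} across the three regions, that $u_h\le 0$ throughout $\mathcal{C}$ with $u_h=0$ exactly on the free boundary $\mathcal{D}_2$, and that the pointwise maximizers of the Hamiltonian in \eqref{eq: HJB_main} are precisely $c^\dag,\pi^\dag,b^\dag$ once rewritten through $y=u_x(x,h)$.

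Second, I would prove the upper bound $\mathbb{E}[\int_0^\infty e^{-(r+\lam)t}(U(c_t,H_t)+\lam V(b_t))\,dt]\le u(x,h)$ for every admissible $(c,\pi,b)\in\mathcal{A}(x,h)$. Applying It\^o's formula to $e^{-(r+\lam)t}u(X_t,H_t)$ and separating the drift, the reference term $e^{-(r+\lam)t}u_h(X_t,H_t)\,dH_t$, and the stochastic integral, the variational inequality \eqref{eq: HJB_main} forces the drift to be bounded above by $-e^{-(r+\lam)t}(U(c_t,H_t)+\lam V(b_t))$, while $u_h\le0$ together with $dH_t\ge0$ makes the reference term nonpositive. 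After localization to control the local martingale and passage to the limit $T\to\infty$, the bound follows provided the transversality estimate $\liminf_{T\to\infty}\mathbb{E}[e^{-(r+\lam)T}u(X_T,H_T)]\ge0$ holds. This is where I expect the main obstacle to lie, and where \textbf{Assumption (A1)} enters decisively: the admissible exponents $\gamma_2\le(1-\alpha)\gamma_1<-r_2/r_1$ are exactly what is needed to dominate the growth of $u$ and the moments of $X_T$ and $H_T$ uniformly in $T$.

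Third, I would verify attainment for the feedback controls and identify $y^*$. Along the candidate optimal state the Hamiltonian is maximized, so the drift inequality becomes an equality, and the controlled reference $H_t^*$ increases only when $(X_t^*,H_t^*)\in\mathcal{D}_2$, where $u_h=0$; hence $u_h(X_t^*,H_t^*)\,dH_t^*\equiv0$ and the discounted process becomes a genuine martingale, the requisite integrability again following from the growth of the coefficients $C_i(h)$ in Lemma \ref{lemma: C_order} under \textbf{Assumption (A1)}, yielding the reverse inequality together with $\lim_{T\to\infty}\mathbb{E}[e^{-(r+\lam)T}u(X_T^*,H_T^*)]=0$. To pin down $y^*$ I would establish the budget identity $\mathbb{E}[\int_0^\infty (c^\dag(Y_t(y),H_t^\dag(y))+\lam b^\dag(Y_t(y),H_t^\dag(y)))M_t\,dt]=-v_y(y,h)$ by a replication computation built on the deflator property of $M_t$ and the fact that $Y_t(y)=ye^{(r+\lam)t}M_t$ is a martingale; since $y\mapsto -v_y(y,h)$ is strictly decreasing by convexity of $v$, the equation $-v_y(y^*,h)=x$ has the unique solution $y^*=u_x(x,h)$.

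Finally, I would derive the stated structure of $H_t^*$. Because $Y_t^*=u_x(X_t^*,H_t^*)$ and the feedback $c^\dag$ equals $H_t^*$ precisely on $\mathcal{D}_2$, the reference increases if and only if $Y_t^*=(1-\alpha)(H_t^*)^{(1-\alpha)\gamma_1-1}$; inverting this relation, in which the exponent $\frac{1}{(1-\alpha)\gamma_1-1}$ is negative, turns the running maximum of $c^*$ into a running minimum of $Y^*$ and gives $H_t^\dag(y)=h\vee(\inf_{s\le t}Y_s(y)/(1-\alpha))^{1/((1-\alpha)\gamma_1-1)}$. This is exactly the Skorokhod reflection that keeps $(X^*,H^*)$ inside $\mathcal{C}$, which yields the ``increases only at the boundary'' characterization. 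For the initial jump, if $y^*(x,h)<(1-\alpha)h^{(1-\alpha)\gamma_1-1}$ the decreasing inversion gives $H_0^*=(y^*/(1-\alpha))^{1/((1-\alpha)\gamma_1-1)}>h=H_{0-}^*$, while the pathwise continuity of $Y^*$ for $t>0$ makes its running infimum, and hence $H_t^*$, continuous thereafter, so $t=0$ is the unique jump time.
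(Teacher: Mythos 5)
Your proposal follows the classical \emph{primal} verification route (It\^o's formula applied to $e^{-(r+\lam)t}u(X_t,H_t)$, supersolution property of the variational inequality, transversality), whereas the paper's proof is a \emph{dual} martingale argument: for each fixed $y>0$ it bounds the objective of an arbitrary admissible control by $\mathbb{E}[\int_0^\infty e^{-(r+\lam)t}\tV(Y_t(y),H_t^\dag(y))dt]+yx=v(y,h)+yx$ using the pointwise Fenchel inequality $U(c,h)+\lam V(b)\leq \tV(q,h)+q(c+\lam b)$ together with the budget constraint \eqref{eq: ECM<=x}, identifies the right-hand side with $v(y,h)+yx$ via the probabilistic representation of $v$ (Lemmas \ref{lemma: 1}--\ref{lemma: 2}), shows attainment and fixes $y^*$ by Lemma \ref{lemma: 3}, and concludes by $u(x,h)=\inf_{y>0}(v(y,h)+yx)$. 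The dual route never applies It\^o's formula to $u$ along an arbitrary controlled path, and this is not an accident.

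The concrete gap in your plan is the domain issue. The candidate $u$ is constructed by inverting the Legendre transform only on the effective domain $\mathcal{C}$, i.e.\ for $x\leq x_\mathrm{lavs}(h)$. Under a generic admissible control the pair $(X_t,H_t)$ need not stay in $\mathcal{C}$: for instance, consuming at the floor $c_t=\nu h$ forever with $\pi\equiv b\equiv 0$ keeps $H_t\equiv h$ while $X_t$ grows past $x_\mathrm{lavs}(h)$ into $\mathcal{D}_3$. Your It\^o/supersolution argument then has no function to differentiate unless you first extend $u$ to all of $\{x\geq \nu h/(r+\lam)\}$ (say by $u(x,h):=u(x,\hat h(x))$ for $(x,h)\in\mathcal{D}_3$) and re-verify that the extension is $C^{2,1}$ across $\mathcal{D}_2$ and satisfies the variational inequality \eqref{eq: HJB_main} with the \emph{original} reference level $h$ appearing in $U(c,h)$ and in the constraint $c\in[\nu h,h]$ --- none of which is addressed in your sketch, and which is precisely the bookkeeping the dual method avoids. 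A second, smaller issue is that the transversality and localization estimates needed for attainment (your ``main obstacle'') are asserted rather than proved; the paper spends Lemmas \ref{lemma: 4}, \ref{lemma: 4_2} and \ref{lemma: claim1}--\ref{lemma: claim3} establishing exactly these decay rates for $e^{-rT}v(Y_T,\hat H_T)$ under \textbf{Assumption (A1)}, and in the primal formulation you would additionally need $\mathbb{E}[e^{-(r+\lam)T}X_T^*Y_T^*]\to 0$ to pass from $v$ to $u=v-yv_y$. Your final paragraph on the structure of $H_t^*$ and the initial jump is correct and matches the paper.
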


\begin{proof}
Similar to Deng et al. \cite{DengLiPY2021FS}, we need to show that the solution of the HJB equation \eqref{eq: HJB_eqn} coincides with the value function, i.e. there exists $(\pi^*, c^*, b^*)\in\mathcal{A}(x)$ such that
$u(x,h) = \mathbb{E}\bigg[ \int_0^\infty e^{-rt}u(c_t^*, H_t^*)dt \bigg]$. For any admissible strategy $(\pi, c) \in \mathcal{A}(x)$, similar to the proof of Lemma 1 in \cite{Arun2020arXiv}, we have
\begin{equation}\label{eq: ECM<=x}
\mathbb{E}\bigg[ \int_0^\infty (c_t+\lam b_t)M_tdt \bigg] \leq x.
\end{equation}

Let $h$ be the fixed parameter, the dual transform of $U(c,h){\red+\lam V(b)}$ with respect to $c$ {\red and $b$} in the constrained domain that {\red$\widetilde{V}(q,h) := \sup_{c\in[\nu h, h]} \big[U(c,h)-cq\big] + \lam\sup_{b\geq0}\big[V(b) - bq\big]$} defined in \eqref{eq: V_def}.
Moreover, $\widetilde{V}$ can be attained by the construction of the feedback optimal control $c^\dag(y,h)$ in \eqref{eq: c_y_beta<1}.

In what follows, we distinguish the two reference processes, namely $H_t := h\vee \sup_{s\leq t}c_s$ and $H_t^\dag(y):= h\vee \sup_{s\leq t} c^\dag(Y_s(y), H_s^\dag(y))$ that correspond to the reference process under an arbitrary consumption process $c_t$ and under the optimal consumption process $c^\dag$ with an arbitrary $y>0$.
Note that the global optimal reference process will be defined later by $H_t^* := H_t^\dag(y^*)$ with $y^*>0$ to be determined.
Let us now further introduce
\begin{equation}\label{eq: H_hat}
\hat{H}_t(y) := h\vee \bigg( (1-\alpha)^{-\frac{1}{(1-\alpha)\gamma_1-1}} \big(\inf_{s\leq t}Y_s(y) \big)^{\frac{1}{(1-\alpha)\gamma_1-1}} \bigg),
\end{equation}
where $Y_t(y) = ye^{rt}M_t$ is the discounted martingale measure density process.

For any admissible controls $(\pi, c) \in \mathcal{A}(x)$, recall the reference process $H_t = h\vee \sup_{s\leq t}c_s$, and for all $y>0$, we see that
\begin{equation}\label{eq: verification_main}
\begin{aligned}
&~ \mathbb{E}\left[\int_0^{\infty} e^{-(r+\lam) t}U(c_t, H_t)dt + \lam\int_0^\infty e^{-(r+\lam)t} V(b_t)dt \right]\\
&= {\red\mathbb{E}\bigg[ \int_0^\infty e^{-(r+\lam)t}(U(c_t, H_t) - Y_t(y)c_t)dt\bigg]+ \lambda\mathbb{E}\bigg[ \int_0^\infty e^{-(r+\lam)t}(V(b_t)-Y_t(y)b_t )dt \bigg]}\\
&\quad+ y\mathbb{E}\bigg[ \int_0^\infty (c_t+\lam b_t)M_tdt \bigg] \\
&\leq \mathbb{E}\bigg[ \int_0^\infty e^{-(r+\lam)t} \tV(Y_t(y), H_t^\dag(y))dt \bigg]
 + yx \\
&= \mathbb{E}\bigg[ \int_0^\infty e^{-(r+\lam)t} \tV(Y_t(y), \hat{H}_t(y))dt \bigg]
+ yx \\
&= v(y,h) + yx,
\end{aligned}
\end{equation}
the third equation holds because of Lemma \ref{lemma: 2}, and the last equation is verified by Lemma \ref{lemma: 1}.
In addition, Lemma \ref{lemma: 3} guarantees the inequality, and shows it becomes an equality with the choices of $c_t^* = c^\dag(Y_t(y^*), H_t^\dag(y^*))$ and $b_t^*= b^\dag(Y_t(y^*), H_t^\dag(y^*))$
, in which $y^*$ is the solution to the equation $\mathbb{E}\big[ \int_0^\infty (c^\dag(Y_t(y^*), H_t^\dag(y^*))+\lam b^\dag(Y_t(y^*), H_t^\dag(y^*)))M_tdt \big] = x$ for a given $x\geq \frac{\nu h}{r+\lam}$. In conclusion, we have
\begin{align*}
&\sup\limits_{(\pi, c)\in\mathcal{A}(x)}
\mathbb{E}\left[\int_0^{\infty} e^{-(r+\lam) t}U(c_t, H_t)dt + \lam\int_0^\infty e^{-(r+\lam)t} V(b_t)dt \right]\\
=& \inf_{y>0}(v(y,h) + yx) = u(x,h).
\end{align*}
\end{proof}


Using the dual relationship between $u$ and $v$, we have the optimal $x = g(\cdot, h) := -v_y(\cdot,h)$.
Define $f(\cdot, h)$ as the inverse of $g(\cdot, h)$, then $u(x,h) = v(f(x,h), h) + xf(x,h)$.
Note that $v$ has different expressions in the regions $c=0$, $0<c<h$ and $c=h$, the function $f$ should also have the piecewise from across these regions.
By the definition of $g$, the invertibility of the map $x\mapsto g(x,h)$ is guaranteed by the following lemma.

\begin{lemma}\label{lemma: beta<1}
Under \textbf{Assumption (A1)}, the value function $v(y,h)$ in \eqref{eq: dual_solu_beta<1} is convex in all regions so that the inverse Legendre transform $u(x,h) = \inf_{y\geq (1-\alpha)h^{(1-\alpha)\gamma-1}} [v(y,h) + xy]$ is well defined.
Moreover, it implies that the feedback optimal portfolio $\pi^*(y,h) > 0$ at all time.
\end{lemma}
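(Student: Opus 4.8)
The two conclusions are equivalent to a single inequality. Since the dual transform gives $u_{xx}(x,h)=-1/v_{yy}(y,h)$ and the candidate portfolio in \eqref{eq: pi_y_beta<1} is $\pi^\dag(y,h)=\frac{\mu-r}{\sigma^2}\,y\,v_{yy}(y,h)$ with $\mu>r$, $\sigma>0$ and $y>0$, it suffices to prove that $v(\cdot,h)$ is strictly convex, i.e. $v_{yy}(y,h)>0$ on the whole domain $\{y\ge(1-\alpha)h^{(1-\alpha)\gamma_1-1}\}$: this makes the inverse Legendre transform $u(x,h)=\inf_y[v(y,h)+xy]$ well posed and forces $\pi^\dag>0$. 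The plan is therefore to show $v_{yy}>0$ directly from \eqref{eq: dual_solu_beta<1}, region by region. The first reduction is to note that the linear and $y$-independent pieces drop out of $v_{yy}$, so in each region $v_{yy}$ is a finite sum of monomials $y^{p-2}$ with $p\in\{r_1,r_2,\beta_1,\beta_2\}$, each carrying the curvature factor $p(p-1)$; because $r_1>1$ while $r_2,\beta_1,\beta_2<0$, every such factor is strictly positive, so each monomial contributes with the sign of its coefficient. Using \textbf{Assumption (A1)} I would record the key sign facts: $\gamma_2\le(1-\alpha)\gamma_1<-r_2/r_1$ implies $\gamma_2<-r_2/r_1$, equivalently $\beta_2>r_2$, which makes the bequest coefficient $\frac{2\lam K^{1-\beta_2}}{\kappa^2\beta_2(\beta_2-r_1)(\beta_2-r_2)}$ strictly positive, so the $y^{\beta_2}$ term is convex in every region; and $-r_2/r_1\ne\gamma_1$ is exactly the non-resonance condition $\beta_1\ne r_2$ that validates the particular solutions in \eqref{eq: gen_solu_dual_beta<1}.

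Next I would dispose of the two outer regions, where the monomial coefficients all turn out positive. On $(1-\alpha)h^{(1-\alpha)\gamma_1-1}\le y<h^{(1-\alpha)\gamma_1-1}$ (optimal consumption $c=h$), rewriting $C_5$ and $C_6$ from \eqref{eq: C_def} as single signed products and chasing signs with $r_1>1>0>r_2$, $r_1+r_2=1$, $1-\beta_1>0$, together with $r_1(1-\alpha)\gamma_1+r_2<0$, $r_2(1-\alpha)\gamma_1+r_1>0$ and $r_2\gamma_1+r_1>0$ (all consequences of \textbf{Assumption (A1)}), yields $C_5>0$ and $C_6>0$, so every monomial in $v_{yy}$ is positive there. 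On $y>\nu^{\gamma_1-1}h^{(1-\alpha)\gamma_1-1}$ ($c=\nu h$) only the two monomials $C_2 y^{r_2-2}$ and the bequest term survive, and a short computation reduces $C_2$ to $C_6$ plus a correction term whose sign I would check in the two cases $\beta_1>r_2$ and $\beta_1<r_2$ separately, finding it positive in both, so $C_2>0$. Thus $v_{yy}>0$ on both outer regions; in particular, by the $C^1$ smooth fit in \eqref{eq: dual_smooth_beta<1} together with continuity of $v_{yy}$ across the interfaces (which follows from \eqref{eq: LODE_beta<1} and continuity of $\tV(\cdot,h)$), the values of $v_{yy}$ at the transition points $y_1=\nu^{\gamma_1-1}h^{(1-\alpha)\gamma_1-1}$ and $y_2=h^{(1-\alpha)\gamma_1-1}$ are strictly positive.

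The hard part is the middle region $h^{(1-\alpha)\gamma_1-1}\le y\le\nu^{\gamma_1-1}h^{(1-\alpha)\gamma_1-1}$. Here $v_{yy}$ is a genuine four-term combination $C_3 y^{r_1-2}+C_4 y^{r_2-2}+(\text{bequest})y^{\beta_2-2}+(\text{consumption})y^{\beta_1-2}$ in which $C_3<0$, and the consumption monomial is convex only when $\beta_1>r_2$, being concave in the complementary case; so positivity cannot be read off term by term and must come from the combination. Rather than sign $v_{yy}$ pointwise, I would run a maximum-principle argument on $w:=v_{yy}$: differentiating \eqref{eq: LODE_beta<1} twice on this region gives the linear ODE $\frac{\kappa^2}{2}y^2 w''+2\kappa^2 y w'+(\kappa^2-(r+\lam))w=-\tV_{yy}(y,h)$, and $\tV(\cdot,h)$ is strictly convex (being a conjugate of the concave $U+\lam V$), so the right-hand side is strictly negative and $w$ is a strict supersolution. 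Since the boundary data $w(y_1),w(y_2)$ are already known to be positive, the weak and then strong maximum principle would give $w>0$ throughout $[y_2,y_1]$. The delicate point I expect is that the zeroth-order coefficient $\kappa^2-(r+\lam)$ has indefinite sign, so the comparison principle does not apply in raw form; the remedy is the substitution $w=y^{\,r_2-2}\hat w$ that factors out a homogeneous mode and produces a reduced operator with vanishing zeroth-order term, to which the maximum principle does apply while the boundary positivity is preserved. Verifying this reduction and the transferred boundary signs is the main obstacle, and it is where the quantitative content of \textbf{Assumption (A1)} is fully used.

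Collecting the three regions then yields $v_{yy}(y,h)>0$ on the entire domain, which establishes strict convexity of $v(\cdot,h)$, the well-posedness of the inverse Legendre transform, and the strict positivity $\pi^\dag(y,h)=\frac{\mu-r}{\sigma^2}y v_{yy}(y,h)>0$ claimed in the lemma. As an informal check on the sign, $u(\cdot,h)$ is the value function of a concave maximization over a control set convex in $(x,c,\pi,b)$, hence concave in $x$, which is precisely $v$ convex; but since the verification builds $u$ from the explicit $v$, it is the direct monomial-and-maximum-principle analysis above that I would carry out in detail.
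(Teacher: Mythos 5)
Your proposal is correct in structure and reaches the right conclusion, but it handles the decisive step differently from the paper. The two outer regions are treated the same way in both arguments: there $v_{yy}$ is a sum of monomials $y^{p-2}$ with curvature factors $p(p-1)>0$, so everything reduces to sign-chasing the coefficients $C_2$, $C_5$, $C_6$ using $r_1+r_2=1$, $r_1(1-\alpha)\gamma_1+r_2<0$ and $r_2\gamma_1+r_1>0$ under \textbf{Assumption (A1)}; your reductions of $C_2$ to $C_6$ plus a signed correction match the paper's chain $C_2(h)>C_2(h)-C_6(h)\geq 0$. The divergence is in the middle region $h^{(1-\alpha)\gamma_1-1}\leq y\leq \nu^{\gamma_1-1}h^{(1-\alpha)\gamma_1-1}$. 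The paper factors out the bequest term (positive by $\beta_2>r_2$), writes the remainder as $\frac{2(r+\lam)}{\kappa^2}y^{\beta_1-2}\varphi(y)$ with $\varphi(y)=C_3(h)y^{r_1-\beta_1}+C_4(h)y^{r_2-\beta_1}+\frac{(\beta_1-1)h^{\alpha\beta_1}}{(r+\lam)(\beta_1-r_1)(\beta_1-r_2)}$, and shows by a first-order-condition analysis that $\varphi$ is either monotone or increasing-then-decreasing, so that its minimum over the interval sits at an endpoint; positivity at the two endpoints is then checked by direct computation using \eqref{eq: solu_dual_smooth_beta<1}. You instead differentiate the dual ODE \eqref{eq: LODE_beta<1} twice, obtain $\frac{\kappa^2}{2}y^2w''+2\kappa^2 yw'+(\kappa^2-(r+\lam))w=-\tV_{yy}<0$ for $w=v_{yy}$, kill the zeroth-order term via $w=y^{r_2-2}\hat w$ (which works: $\frac{\kappa^2}{2}(r_2-1)r_2=r+\lam$ since $r_2$ is a root of the indicial equation), and invoke the minimum principle with the endpoint positivity inherited by continuity of $v_{yy}$ from the outer regions. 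Both routes are valid; they are in fact cousins, since each ultimately locates the minimum of a positive-at-the-endpoints quantity on the boundary of the interval. Your version buys structure — it exploits the convexity of the dual $\tV$, which is automatic from the Legendre transform, and avoids the explicit endpoint evaluations of $\varphi$ — at the cost of needing $C^2$-continuity of $v$ across the interfaces (which does follow from \eqref{eq: LODE_beta<1} and continuity of $v$ and $\tV$, as you note) and the change-of-variables verification. The paper's version is more elementary and self-contained but leans on coefficient computations that it states rather tersely. Neither argument has a gap that the other avoids; the endpoint sign checks you would still have to carry out are exactly the ones the paper performs for $\varphi(\nu^{\gamma_1-1}h^{(1-\alpha)\gamma_1-1})$ and $\varphi(h^{(1-\alpha)\gamma_1-1})$, transplanted to $v_{yy}$ itself.
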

\begin{proof}
See Section \ref{sec: proof_convex_dual}.
\end{proof}

\subsection{Optimal Feedback Controls}
The main result in this subsection is based on \textbf{Assumption (A1)}. Thanks to Lemma \ref{lemma: beta<1}, we can apply the inverse Legendre transform to the solution $v(y,h)$ in \eqref{eq: dual_solu_beta<1}. Similar to Section 3.1 in \cite{DengLiPY2021FS}, we can characterize the following four boundary curves $x_\mathrm{bound}(h)$, $x_\mathrm{low}(h)$, $x_\mathrm{aggr}(h)$, and $x_\mathrm{lavs}(h)$ that
\begin{equation}\label{eq: boundcurves_x}
\begin{aligned}
x_{\mathrm{bound}}(h) &:= \frac{\nu h}{r+\lam}, \\
x_{\mathrm{low}}(h) &:= -C_2(h)r_2 \nu^{-r_1(\gamma_1-1)}h^{-r_1((1-\alpha)\gamma_1-1)} - \frac{2\lam K^{1-\beta_2}\nu^{(\beta_2-1)(\gamma_1-1)}}{\kappa^2(\beta_2-r_1)(\beta_2-r_2)}h^{(\beta_2-1)((1-\alpha)\gamma_1-1)} + \frac{\nu h}{r+\lam}, \\
x_{\mathrm{aggr}}(h) &:= -C_3(h)r_1h^{-r_2((1-\alpha)\gamma_1-1)} - C_4(h)r_2h^{-r_1((1-\alpha)\gamma_1-1)}
\\ &~~~~-\frac{2\lam K^{1-\beta_2}}{\kappa^2(\beta_2-r_1)(\beta_2-r_2)}h^{(\beta_2-1)((1-\alpha)\gamma_1-1)}  - \frac{2}{\kappa^2 (\beta_1-r_1)(\beta_1-r_2)}h, \\
x_{\mathrm{lavs}}(h) &:= -C_5(h)r_1(1-\alpha)^{r_1-1}h^{-r_2((1-\alpha)\gamma_1-1)} - C_6(h)r_2(1-\alpha)^{r_2-1}h^{-r_1((1-\alpha)\gamma_1-1)}
\\ &~~~~-\frac{2\lam(1-\alpha)^{\beta_2-1}K^{1-\beta_2}}{\kappa^2(\beta_2-r_1)(\beta_2-r_2)}h^{(\beta_2-1)((1-\alpha)\gamma_1-1)} + \frac{h}{r+\lam},
\end{aligned}
\end{equation}
{\red and it holds that the feedback function of the optimal consumption satisfies: (i) $c^*(x,h)=\nu h$ when $x_\mathrm{bound}(h) \leq x < x_\mathrm{low}(h)$; (ii) $\nu h<c^*(x,h)<h$ when $x_\mathrm{low}(h) \leq x \leq x_\mathrm{aggr}(h)$; (iii) $c^*(x,h)=h$ when $x_\mathrm{aggr}(h) < x \leq x_\mathrm{lavs}(h)$. In particular, the condition $u_x(x,h)\geq (1-\alpha)h^{(1-\alpha)\gamma_1-1}$ in the effective domain $\mathcal{C}$ in \eqref{eq: effective domain} now can be explicitly expressed as $x\leq x_\mathrm{lavs}(h)$.}

%
We also define functions $f_1(x,h)$, $f_2(x,h)$ and $f_3(x,h)$ to be the respective solutions to three equations that
\begin{equation}\label{eq: def_f(x,h)}
\begin{aligned}
x &=  -C_2(h)r_2(f_1(x,h))^{r_2-1} -\frac{2\lam K^{1-\beta_2}f_1(x,h)^{\beta_2-1}}{\kappa^2(\beta_2-r_1)(\beta_2-r_2)} + \frac{\nu h}{r+\lam}, &\mbox{if } x_\mathrm{bound}(h) \leq x < x_\mathrm{low}(h), \\
x &= -C_3(h)r_1(f_2(x,h))^{r_1-1} - C_4(h)r_2(f_2(x,h))^{r_2-1} \\
&~~~-\frac{2\lam K^{1-\beta_2}f_2(x,h)^{\beta_2-1}}{\kappa^2(\beta_2-r_1)(\beta_2-r_2)} - \frac{2h^{\alpha\beta_1}f_2(x,h)^{\beta_1-1}}{\kappa^2(\beta_1-r_1)(\beta_1-r_2)},
  &\mbox{if } x_\mathrm{low}(h) \leq x \leq x_\mathrm{aggr}(h) , \\
x &= -C_5(h)r_1(f_3(x,h))^{r_1-1} - C_6(h)r_2(f_3(x,h))^{r_2-1} \\
&~~~-\frac{2\lam K^{1-\beta_2}f_3(x,h)^{\beta_2-1}}{\kappa^2(\beta_2-r_1)(\beta_2-r_2)} + \frac{h}{r+\lam},
 &\mbox{if } x_\mathrm{aggr}(h) < x \leq x_\mathrm{lavs}(h).
\end{aligned}
\end{equation}
%

The following proposition shows the semi-analytical form for the value function, optimal consumption, and optimal portfolio.
\begin{theorem}\label{cor: beta<1}
For $(x,h) \in \{(x,h)\in\mathbb{R}_+^2: x\geq x_\mathrm{bound}(h)\}$, $0<\nu<1$, $\gamma_1, \gamma_2>0$, the value function $u(x,h)$ in \eqref{eq: value} can be expressed in a piecewise form that
\begin{equation}\label{eq: u_beta<1}
\small
u(x,h) = \begin{cases}
\begin{aligned}
&C_2(h)f_1(x,h)^{r_2} + \frac{2\lam K^{1-\beta_2}}{\kappa^2\beta_2(\beta_2-r_1)(\beta_2-r_2)}f_1(x,h)^{\beta_2}  \\
&+ \frac{\nu^\gamma}{(r+\lam)\gamma_1}h^{(1-\alpha)\gamma} - \frac{\nu h}{r+\lam} f_1(x,h),
\end{aligned}
& \mbox{if } x_\mathrm{bound}(h) \leq x < x_\mathrm{low}(h),   \\
\begin{aligned}
&C_3(h)f_2(x,h)^{r_1} + C_4(h)f_2(x,h)^{r_2} + \frac{2\lam K^{1-\beta_2}}{\kappa^2\beta_2(\beta_2-r_1)(\beta_2-r_2)}f_2(x,h)^{\beta_2} \\
&+ \frac{2h^{\alpha\gamma_1}}{\kappa^2 \beta_1(\beta_1-r_1)(\beta_1-r_2)}f_2(x,h)^{\beta_1},
\end{aligned}
& \mbox{if } x_\mathrm{low}(h) \leq x \leq x_\mathrm{aggr}(h), \\
\begin{aligned}
&C_5(h)f_3(x,h)^{r_1} + C_6(h)f_3(x,h)^{r_2} + \frac{2\lam K^{1-\beta_2}}{\kappa^2\beta_2(\beta_2-r_1)(\beta_2-r_2)}f_3(x,h)^{\beta_2}  \\
&+ \frac{1}{(r+\lam)\gamma_1}h^{(1-\alpha)\gamma_1} - \frac{h}{r+\lam} f_3(x,h),
\end{aligned}
& \mbox{if } x_\mathrm{aggr}(h) < x \leq x_\mathrm{lavs}(h),  \\
\end{cases}
\end{equation}
where the boundaries $x_\mathrm{bound}(h)$, $x_\mathrm{low}(h)$, $x_\mathrm{aggr}(h)$, and $x_\mathrm{lavs}(h)$ are given in \eqref{eq: boundcurves_x}.
Moreover, the feedback optimal consumption and portfolio can also be given in terms of primal variables $(x,h)$ accordingly:
\begin{equation}\label{eq: c_beta<1}
c^*(x,h) = \begin{cases}
\nu h , & \mbox{if } x_\mathrm{bound}(h) \leq x < x_\mathrm{low}(h),  \\
h^{\frac{\alpha\gamma_1}{\gamma_1-1}}f_2(x,h)^{\frac{1}{\gamma_1-1}}, & \mbox{if } x_\mathrm{low}(h) \leq x \leq x_\mathrm{aggr}(h), \\
h, & \mbox{if } x_\mathrm{aggr}(h) < x < x_\mathrm{lavs}(h), \\
\big(\frac{f_3(x,\tilde{h}(x))}{1-\alpha}\big)^{\frac{1}{(1-\alpha)\gamma_1-1}}, & \mbox{if } x = x_\mathrm{lavs}(h),
\end{cases}
\end{equation}
where  $\tilde{h}(x):= x^{-1}_\mathrm{lavs}(x)$, the optimal portfolio
\begin{equation}\label{eq: pi_beta<1}
\begin{aligned}
&\pi^*(x,h) \\
=&\frac{\mu-r}{\sigma^2}
\begin{cases}
 \frac{2(r+\lam)}{\kappa^2}C_2(h)f_1(x,h)^{r_2-1} + \frac{2\lam K^{1-\beta_2}(\beta_2-1)}{\kappa^2(\beta_2-r_1)(\beta_2-r_2)}f_1(x,h)^{\beta_2-1}, & \mbox{if } x_\mathrm{bound}(h) \leq x < x_\mathrm{low}(h), \\
\begin{aligned}
 &\frac{2(r+\lam)}{\kappa^2}C_3(h)f_2(x,h)^{r_1-1} +  \frac{2(r+\lam)}{\kappa^2}C_4(h)f_2(x,h)^{r_2-1} \\
&+ \frac{2\lam K^{1-\beta_2}(\beta_2-1)}{\kappa^2(\beta_2-r_1)(\beta_2-r_2)}f_2(x,h)^{\beta_2-1}\\
&+ \frac{2(\beta_1-1)h^{\alpha\gamma_1}}{\kappa^2(\beta_1-r_1)(\beta_1-r_2)}f_2(x,h)^{\beta_1-1},
\end{aligned}
& \mbox{if } x_\mathrm{low}(h) \leq x \leq x_\mathrm{aggr}(h),\\
\begin{aligned}
&\frac{2(r+\lam)}{\kappa^2}C_5(h)f_3(x,h)^{r_1-1} +  \frac{2(r+\lam)}{\kappa^2}C_6(h)f_3(x,h)^{r_2-1} \\
&+ \frac{2\lam K^{1-\beta_2}(\beta_2-1)}{\kappa^2(\beta_2-r_1)(\beta_2-r_2)}f_3(x,h)^{\beta_2-1} ,
\end{aligned}
& \mbox{if } x_\mathrm{aggr}(h) < x \leq x_\mathrm{lavs}(h),
\end{cases}
\end{aligned}
\end{equation}
and the optimal bequest
\begin{equation}\label{eq: b_beta<1}
b^*(x,h) = \begin{cases}
\bigg(\frac{f_1(x,h)}{K}\bigg)^{\frac{1}{\gamma_2-1}}, &\mbox{if } x_\mathrm{bound} \leq x < x_\mathrm{low}(h),\\
\bigg(\frac{f_2(x,h)}{K}\bigg)^{\frac{1}{\gamma_2-1}}, &\mbox{if } x_\mathrm{low} \leq x < x_\mathrm{aggr}(h),\\
\bigg(\frac{f_3(x,h)}{K}\bigg)^{\frac{1}{\gamma_2-1}}, &\mbox{if } x_\mathrm{aggr} < x \leq x_\mathrm{lavs}(h).
\end{cases}
\end{equation}
Moreover, for any initial value $(X_0^*, H_0^*) = (x,h) \in \mathcal{C}$, the stochastic differential equation
\begin{equation}\label{eq: SDE}
dX_t^* = (r+\lam)X_t^*dt + \pi^*(\mu-r)dt- c^*dt - \lam b_t^*dt + \pi^*\sigma dW_t,
\end{equation}
has a unique strong solution under the optimal feedback control $(c^*, \pi^*)$.
\end{theorem}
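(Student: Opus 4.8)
The plan is to invert the dual correspondence set up in Theorem \ref{thm: beta<1} and Lemma \ref{lemma: beta<1}, and then read off every primal object by substitution. By Lemma \ref{lemma: beta<1}, $v(\cdot,h)$ is strictly convex on each region, so $g(\cdot,h):=-v_y(\cdot,h)$ is strictly decreasing and hence a bijection of the dual domain $\{y\ge(1-\alpha)h^{(1-\alpha)\gamma_1-1}\}$ onto the primal domain $\{x\ge x_{\mathrm{bound}}(h)\}$. I would define $f(\cdot,h):=g(\cdot,h)^{-1}$; because $v$ is piecewise across the three $y$-regions, $f$ inherits the piecewise form $f_1,f_2,f_3$, and the defining relations \eqref{eq: def_f(x,h)} are exactly $x=-v_y(y,h)$ written out region by region using the explicit coefficients of \eqref{eq: dual_solu_beta<1}.

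First I would pin down the four boundary curves. Evaluating $g(y,h)=-v_y(y,h)$ at the three free-boundary values $y=\nu^{\gamma_1-1}h^{(1-\alpha)\gamma_1-1}$, $y=h^{(1-\alpha)\gamma_1-1}$ and $y=(1-\alpha)h^{(1-\alpha)\gamma_1-1}$ produces $x_{\mathrm{low}}(h)$, $x_{\mathrm{aggr}}(h)$ and $x_{\mathrm{lavs}}(h)$ respectively, while the limit $v_y\to-\nu h/(r+\lam)$ (cf. \eqref{eq: boundary_dual_y->infty}) gives $x_{\mathrm{bound}}(h)=\nu h/(r+\lam)$; this is just a differentiation of \eqref{eq: dual_solu_beta<1}, and the smooth-fit conditions of Proposition \ref{prop: dual_solution_beta<1} guarantee the curves glue continuously. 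The value function then follows from the inverse Legendre transform $u(x,h)=v(f(x,h),h)+x\,f(x,h)$: substituting $y=f_i(x,h)$ into the explicit $v$ on each region and simplifying gives the three branches of \eqref{eq: u_beta<1}, with the $y$-region conditions transported to the stated wealth intervals by the boundary curves. The optimal feedback controls are obtained the same way, by composing the dual feedback maps $c^\dag$, $\pi^\dag(y,h)=\frac{\mu-r}{\sigma^2}yv_{yy}(y,h)$ and $b^\dag(y,h)=(y/K)^{1/(\gamma_2-1)}$ from Theorem \ref{thm: beta<1} with $y=f_i(x,h)$; strict convexity from Lemma \ref{lemma: beta<1} yields $v_{yy}>0$, hence $\pi^*>0$ throughout.

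For the strong solution of \eqref{eq: SDE}, I would not solve the closed-loop equation from scratch but instead exhibit the solution already built in Theorem \ref{thm: beta<1}. Taking $Y_t^*=y^*e^{-\frac{\kappa^2}{2}t-\kappa W_t}$ (so $dY_t^*=-\kappa Y_t^* dW_t$), letting $H_t^*$ be the associated running-extremum reference process, and setting $X_t^*:=-v_y(Y_t^*,H_t^*)$, I would apply It\^o's formula to $X_t^*$. Using $dY_t^*=-\kappa Y_t^*dW_t$ the diffusion term collapses to $\pi^\dag(Y_t^*,H_t^*)\sigma\,dW_t$, and differentiating the reduced ODE \eqref{eq: LODE_beta<1} once in $y$, together with the envelope identity $\tV_y(y,h)=-c^\dag(y,h)-\lam b^\dag(y,h)$, converts the remaining $dt$ terms exactly into $(r+\lam)X_t^*+\pi^*(\mu-r)-c^*-\lam b^*$. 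Uniqueness would then follow from the local Lipschitz property of the feedback coefficients: on the interior of each wealth region they are real-analytic in $x$, and the smooth-fit conditions across $x_{\mathrm{low}}$ and $x_{\mathrm{aggr}}$ make them $C^1$ and hence locally Lipschitz across those interfaces, so a standard Yamada--Watanabe/Gronwall argument applies in the interior of $\mathcal{C}$.

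The step I expect to be the genuine obstacle is the behavior at the reflecting curve $x=x_{\mathrm{lavs}}(h)$, where $H_t^*$ is strictly increasing and acts as a singular (local-time-type) control. There It\^o's formula for $X_t^*=-v_y(Y_t^*,H_t^*)$ generates an additional finite-variation term proportional to $v_{yh}(Y_t^*,H_t^*)\,dH_t^*$, supported on $\{Y_t^*=(1-\alpha)(H_t^*)^{(1-\alpha)\gamma_1-1}\}$; the crux is to use the free-boundary condition \eqref{eq: free_boundary_dual} (equivalently $v_h=0$ along that curve) and the smooth-fit relations fixing $C_5,C_6$ to show this contribution is consistent with \eqref{eq: SDE} and that the induced reflection of $(X_t^*,H_t^*)$ is well-posed in the Skorokhod sense. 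Proving uniqueness up to and including this boundary, rather than only in the interior, is where care is required, and I would model the argument on the reflected-SDE treatments in Arun \cite{Arun2020arXiv} and Deng et al. \cite{DengLiPY2021FS}.
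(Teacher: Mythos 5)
Your proposal is correct and follows essentially the same route as the paper: the paper's own proof simply invokes Theorem \ref{thm: beta<1} together with the inverse Legendre transform for the feedback formulas, and defers the existence and uniqueness of the strong solution of \eqref{eq: SDE} (including the reflection along $x=x_{\mathrm{lavs}}(h)$ via the free boundary condition $v_h=0$) to the argument of Proposition 5.1 of Deng et al.\ \cite{DengLiPY2021FS}, which is precisely the treatment you outline. Your write-up is in fact more detailed than the paper's, correctly identifying the local-time term at the reflecting curve as the only delicate point.
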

\begin{proof}
The proof of Theorem \ref{cor: beta<1} is trivial under results of Theorem \ref{thm: beta<1} and the inverse Legendre transfrom.
{\red{}Moreover, the existence and uniqueness of the strong solution to SDE \eqref{eq: SDE} follows the same argument in the proof of Proposition 5.1 of \cite{DengLiPY2021FS}.}
\end{proof}

Based on Theorem \ref{cor: beta<1}, we can derive some asymptotic results of the optimal consumption-wealth ratio $c_t^*/X_t^*$ and the invest fraction $\pi_t^*/X_t^*$ when the wealth is sufficiently large.
{\red{}As wealth $x\rightarrow+\infty$, the running maximum $h$ updates to $h = x_\mathrm{lavs}^{-1}(x)$ and also tends to infinity.
Therefore, from the constraint that $x\leq x_\mathrm{lavs}(h)$, the asymptotic properties of optimal controls as $x\rightarrow+\infty$ should be restrained along the boundary curve $x = x_\mathrm{lavs}(h)$ as $h\rightarrow+\infty$.}
\begin{corollary}\label{cor: asy_infty_wealth}
Two limits
$
\lim\limits_{h\rightarrow+\infty} \frac{c^*(x_\mathrm{lavs}(h),h)}{x_\mathrm{lavs}(h)}
$
and
$
\lim\limits_{h\rightarrow+\infty}\frac{\pi^*(x_\mathrm{lavs}(h),h)}{x_\mathrm{lavs}(h)}
$ exist and are both positive.
Meanwhile, the asymptotic behavior of optimal bequest
$
\lim\limits_{h\rightarrow+\infty}\frac{b^*(x_\mathrm{lavs}(h),h)}{x_\mathrm{lavs}(h)}
$ also exists, and is positive if and only if $\gamma_2=(1-\alpha)\gamma_1$.
\end{corollary}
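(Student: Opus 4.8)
\emph{Reduction to a one-variable problem.} The plan is first to notice that the curve $x=x_\mathrm{lavs}(h)$ is precisely the primal image of the free boundary $\mathcal{D}_2$, i.e.\ $f_3(x_\mathrm{lavs}(h),h)=y(h):=(1-\alpha)h^{(1-\alpha)\gamma_1-1}$. Substituting $y(h)$ into the feedback maps \eqref{eq: c_beta<1} and \eqref{eq: b_beta<1} I obtain the closed forms $c^*(x_\mathrm{lavs}(h),h)=h$ and $b^*(x_\mathrm{lavs}(h),h)=\big((1-\alpha)/K\big)^{1/(\gamma_2-1)}h^{((1-\alpha)\gamma_1-1)/(\gamma_2-1)}$. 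Thus all three ratios become explicit functions of $h$, and the corollary reduces to an asymptotic analysis as $h\to+\infty$, driven entirely by the powers of $h$ carried by $x_\mathrm{lavs}(h)$, by $C_5(h),C_6(h)$ from \eqref{eq: C_def}, and by $y(h)$.

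\emph{Asymptotics of the wealth boundary.} Next I would insert $C_5(h)=\Theta(h^{r_2(1-\alpha)\gamma_1+r_1})$ and $C_6(h)=\Theta(h^{r_1(1-\alpha)\gamma_1+r_2})$ into \eqref{eq: boundcurves_x}. Using the Vieta identity $r_1+r_2=1$, the first two terms of $x_\mathrm{lavs}(h)$ and the term $h/(r+\lam)$ are each of exact order $h$, while the remaining $\lam$-term is of order $h^{(\beta_2-1)((1-\alpha)\gamma_1-1)}$; since $\beta_2-1=1/(\gamma_2-1)$ and $(1-\alpha)\gamma_1-1<0$ (which follows from \textbf{(A1)} because $(1-\alpha)\gamma_1<-r_2/r_1<1$), its exponent equals $\tfrac{1-(1-\alpha)\gamma_1}{1-\gamma_2}\le1$, with equality iff $\gamma_2=(1-\alpha)\gamma_1$, again by \textbf{(A1)}. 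Hence $x_\mathrm{lavs}(h)=A\,h\,(1+o(1))$ and $A:=\lim_{h\to\infty}x_\mathrm{lavs}(h)/h$ exists and is finite. Its positivity is obtained \emph{without} a brittle sign computation: the ordering $x_\mathrm{bound}(h)=\nu h/(r+\lam)\le x_\mathrm{lavs}(h)$ forces $A\ge\nu/(r+\lam)>0$.

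\emph{Consumption and bequest ratios.} The consumption ratio is then immediate, $c^*/x_\mathrm{lavs}(h)=h/x_\mathrm{lavs}(h)\to1/A=:C_\infty>0$, which at the same time discharges the postponed boundary condition \eqref{eq: boundary_x->infty}. For the bequest, dividing the closed form by $x_\mathrm{lavs}(h)\sim A h$ gives $b^*/x_\mathrm{lavs}(h)=\Theta\big(h^{e-1}\big)$ with $e:=\tfrac{1-(1-\alpha)\gamma_1}{1-\gamma_2}\le1$. Thus the limit always exists; it vanishes when $\gamma_2<(1-\alpha)\gamma_1$ (so $e<1$) and equals the positive constant $\big((1-\alpha)/K\big)^{1/(\gamma_2-1)}/A$ when $\gamma_2=(1-\alpha)\gamma_1$ (so $e=1$), which is exactly the stated dichotomy.

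\emph{Portfolio ratio and the main difficulty.} For the portfolio I substitute $y(h)$ into \eqref{eq: pi_beta<1}; the same identity $r_1+r_2=1$ shows that the $C_5$- and $C_6$-terms are of exact order $h$, whereas $y(h)^{\beta_2-1}=\Theta\big(h^{(\beta_2-1)((1-\alpha)\gamma_1-1)}\big)=o(h)$ unless $\gamma_2=(1-\alpha)\gamma_1$. Consequently $\pi^*/x_\mathrm{lavs}(h)\to B/A$, where $B$ collects the $O(h)$ coefficients. The crux, and the step I expect to be the most laborious, is the sign bookkeeping needed to show $B>0$: from $r_1>1>0>r_2$, $r_1+r_2=1$, $\beta_1=\gamma_1/(\gamma_1-1)<0$, $\nu\in(0,1)$ and \textbf{(A1)} one checks $1-\beta_1>0$, $\beta_1-r_1<0$, $\nu^{r_2\gamma_1+r_1}<1$, $r_2(1-\alpha)\gamma_1+r_1>0$ and $r_1(1-\alpha)\gamma_1+r_2<0$, which together make the leading coefficients of $C_5$ and $C_6$ strictly positive and hence their contributions to $B$ strictly positive; moreover $\gamma_2\le(1-\alpha)\gamma_1<-r_2/r_1$ yields $\beta_2>r_2$, so the $y^{\beta_2-1}$-term (present at order $h$ only in the knife-edge case) also contributes non-negatively. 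Therefore $B>0$ and $B/A>0$; as a consistency check, Lemma \ref{lemma: beta<1} already guarantees $\pi^*(x_\mathrm{lavs}(h),h)>0$ for every $h$, so the limit could only be nonnegative, and the coefficient computation upgrades this to strict positivity.
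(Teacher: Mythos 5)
Your proposal is correct and follows essentially the same route as the paper's proof: evaluate all feedback controls along the free boundary $y=(1-\alpha)h^{(1-\alpha)\gamma_1-1}$, use the explicit orders of $C_5(h)$, $C_6(h)$ together with $r_1+r_2=1$ to show $x_\mathrm{lavs}(h)=\Theta(h)$, and read off the dichotomy for the bequest from the exponent $(\beta_2-1)((1-\alpha)\gamma_1-1)\le 1$. The only (welcome) additions are your explicit positivity arguments --- the lower bound $x_\mathrm{lavs}(h)\ge \nu h/(r+\lambda)$ and the sign bookkeeping for the portfolio coefficient --- which the paper's proof leaves implicit, stating only that the limits exist.
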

\begin{proof}
See Section \ref{proof: asy_infty_wealth}.
\end{proof}

\begin{remark}\label{remark: nonlinear}
Contrary to \cite{GuasoniHubermanR2020MFF}, all boundary curves $x_\mathrm{low}(h), x_\mathrm{aggr}(h)$ and $x_\mathrm{lavs}(h)$ in the present paper are all nonlinear functions of $h$, because the expected bequest and the optimal life insurance control are considered in our problem.
If $\lam=0$ such that there is no life insurance control, the boundary curves will become linear functions of the reference variable $h$, and the results are similar to those in \cite{GuasoniHubermanR2020MFF}.
\end{remark}

\begin{remark}
Under the optimal control $(c^*, \pi^*, b^*)$, the wealth process $X_t^*$ satisfies the constraint that $X_t^* \geq \frac{\nu H_t^*}{r+\lam}$ if the initial condition $X_0^*=x \geq \frac{\nu h}{r+\lam}$ is satisfied. Indeed, let $Z_t^* := X_t^* - \frac{\nu H_t^*}{r+\lam}$.
If $Z_t^*=0$ at some $t\geq0$, the optimal feedback controls satisfy that $c_t^* = \nu H_t^*$, $\pi_t^* = 0$, and $b_t^*=0$, indicating that $Z_s^* = 0$ and $c_s^* = \nu H_s^*$ for all $s\geq t$. That is, the optimal wealth $X_t^*$ stays at the level $\frac{\nu H_t^*}{r+\lam}$ once this level is hit.
\end{remark}

\begin{remark}
As wealth $x$ tends to the lower bound {\red$\frac{\nu h}{r+\lam}$}, the optimal bequest $b^*\rightarrow0$, and thus the optimal premium $p^* = \lam(b^*-x) <0 $.
If the parameters satisfy $\kappa^2(\beta_2^2-1)\geq2r$, the optimal premium shall always be negative. If the parameters satisfy $\kappa^2(\beta_2^2-1)<2r$, the optimal premium would be positive if $x>x^*$, where $x^*$ satisfies $f(x^*,h) < h^{(1-\alpha)\gamma_1-1}$ and
$$
\frac{\kappa^2(\beta_2^2-1)-2r}{\kappa^2(\beta_2-r_1)(\beta_2-r_2)K^{\beta_2-1}} f(x^*,h)^{\beta_2-1} = -r_1C_5(h)f(x^*,h)^{r_1-1} - r_2C_6(h)f(x^*,h)^{r_2-1} + \frac{h}{r+\lam}.
$$
\end{remark}

\section{Numerical Illustrations}\label{sec: numerical}
In this section, we numerically illustrate some quantitative properties of the feedback functions of optimal consumption, investment, and life insurance premium policy established in Theorem \ref{cor: beta<1}.
Let us choose the following values of the model parameters: $r=0.05$, $\mu = 0.1$, $\sigma=0.25$, $\rho=0.05$, $\lam=0.03$, $\nu=0.2$, $\gamma_1=0.5$, $\gamma_2=0.1$, $\alpha=0.7$, $K=5$, and reference level $h=1$.
In the following figures, we only change the value of one parameter (while keeping other parameters fixed) to show some sensitivity results with respect to that parameter.

The left panel of Figure \ref{pic: boundary_curves} shows that three boundary curves $x_\mathrm{low}(h)$, $x_\mathrm{aggr}(h)$, and $x_\mathrm{lavs}(h)$ are increasing nonlinear functions of $h$.
The graphs are consistent with the intuition that if the past reference level is higher, the investor would expect larger wealth thresholds to trigger the change of consumption from the low constraint $c=\nu h$ to $c>\nu h$, and from $c < h$ to the historical maximum $c=h$, respectively.
{\red{}From the middle panel, the higher mortality probability motivates the agent to reduce all thresholds and consume more aggressively before the death occurs. It can be seen from the right panel of Figure \ref{pic: boundary_curves} that $x_\mathrm{low}$, $x_\mathrm{aggr}$ and $x_\mathrm{lavs}$ are all decreasing in the shortfall aversion parameter $\alpha$, indicating that the more shortfall averse the agent is, the more eager the agent is to consume at the historical maximum level by lowering the corresponding thresholds.}

\begin{figure}[htbp]
\centering
\includegraphics[width=\textwidth]{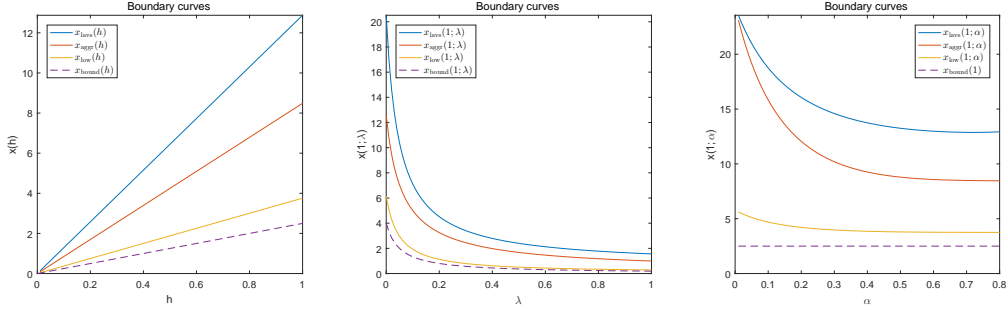}
\vspace{-0.2in}
\caption{{\small Boundary curves $x_\mathrm{bound}$, $x_\mathrm{low}$, $x_\mathrm{aggr}$ and $x_\mathrm{lavs}$ with respect to the reference variable $h$ (left), the force of mortality $\lam$ (middle), and the shortfall aversion parameter $\alpha$ (right), respectively.}}
\label{pic: boundary_curves}
\end{figure}

{\red{}Figures \ref{pic: se_lam} to \ref{pic: se_K} show the sensitivity results of optimal controls on the force of mortality $\lam$, the shortfall aversion $\alpha$ and the bequest motive $K$, respectively. From Figure \ref{pic: se_lam}, when the wealth level $x$ is sufficiently large, the higher force of mortality motivates the larger optimal consumption and higher optimal insurance premium but results in the lower portfolio allocation in the risky asset. These observations can be explained by the real life situation that the agent would spare more cash from the financial market to consume more and purchase more life insurance in view of the higher probability of death. It is interesting to see from Figure \ref{pic: se_alpha} that a larger shortfall aversion parameter $\alpha$ (i.e., the stronger desire to consume at the historical peak level), leads to a larger optimal insurance premium, which is similar to the observation made in Ben-Arab et al. \cite{Ben} that higher consumption habits would increase the demand of life insurance. It is also consistent with two real life observations: $(i)$ the agent who develops higher standard of living due to larger $\alpha$ would purchase more life insurance, possibly to ensure that the left family members can afford the high living standard after the death of the agent; $(ii)$ when the agent has sufficient wealth, purchasing more life insurance can also be an effective instrument to reduce some spared cash and smooth out the consumption path so that the reference level will not increase significantly. From Figure \ref{pic: se_K}, it is natural to see that the higher bequest motive $K$ yields higher demand of life insurance and lower portfolio allocation. We stress that a higher bequest goal also lowers all consumption thresholds and increases the consumption level. This can be explained by the real life observation that the agent who cares more about the life insurance protection is more likely to develop a higher standard of living and consume more aggressively due to a higher reference level.}

\begin{figure}[htbp]
\centering
\includegraphics[width=\textwidth]{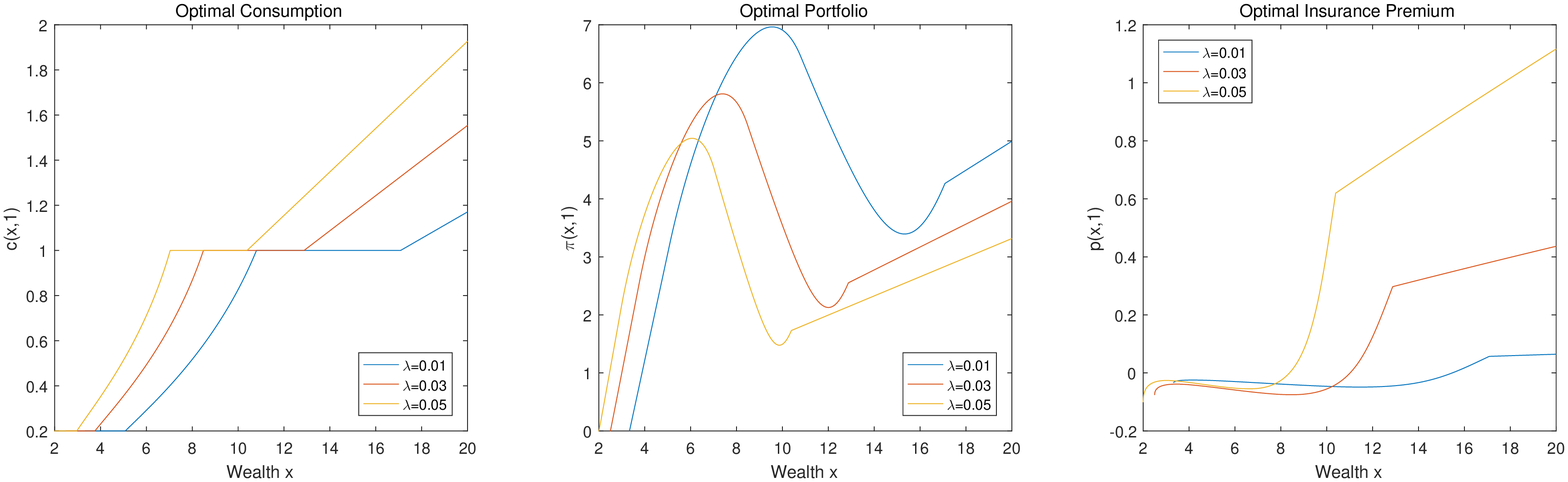}
\vspace{-0.2in}
\caption{{\small Optimal consumption, portfolio and insurance premium for various force of mortality.}}
\label{pic: se_lam}
\end{figure}

\begin{figure}[htbp]
\centering
\includegraphics[width=\textwidth]{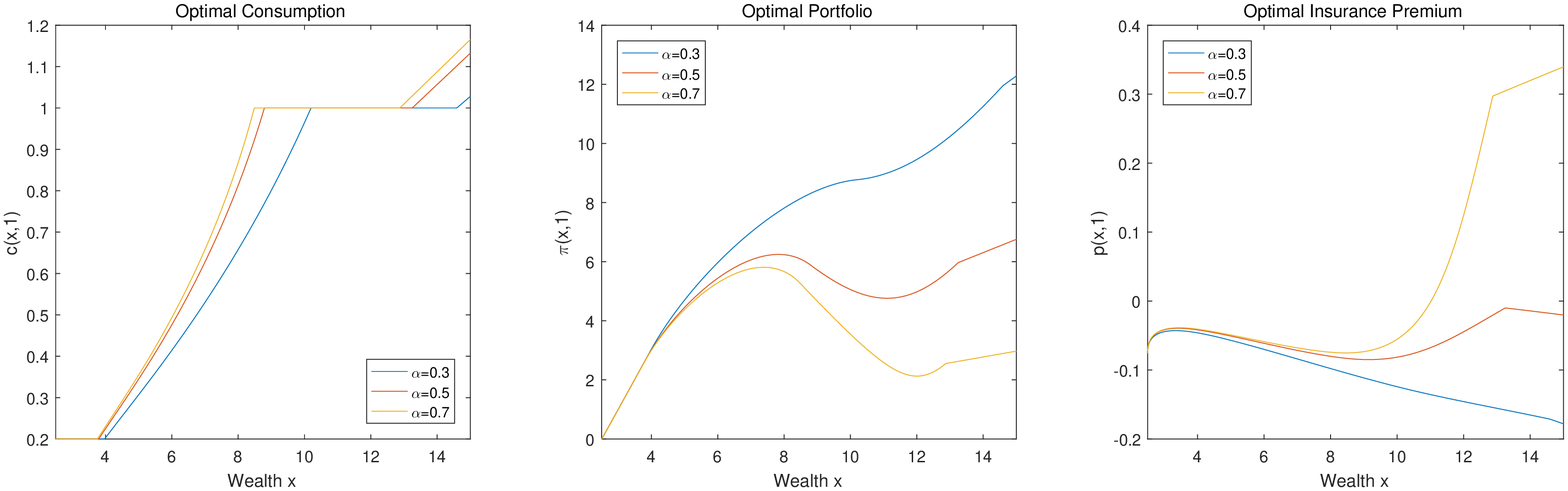}
\vspace{-0.2in}
\caption{{\small Optimal consumption, portfolio and insurance premium for various shortfall aversion.}}
\label{pic: se_alpha}
\end{figure}

\begin{figure}[htbp]
\centering
\includegraphics[width=\textwidth]{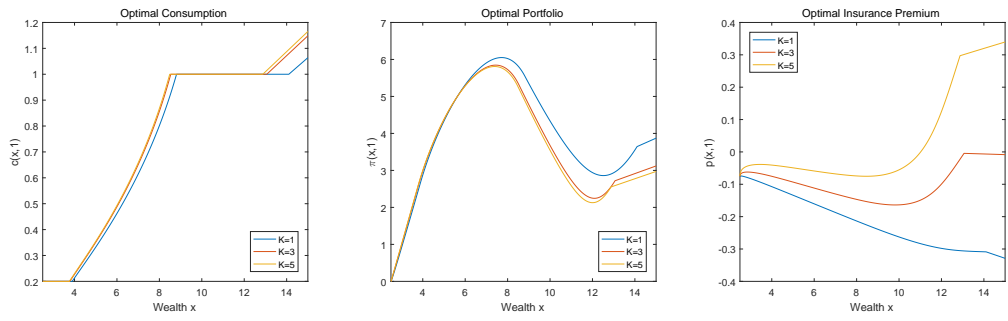}
\vspace{-0.2in}
\caption{{\small Optimal consumption, portfolio and insurance premium for various bequest motives.}}
\label{pic: se_K}
\end{figure}

{\red{}Figure \ref{pic: se_nu} shows sensitivity results of optimal controls on the drawdown constraint parameter $\nu$. When the wealth level is sufficient such that the drawdown constraint on consumption rate can be supported, the larger parameter $\nu$ increases all thresholds for consumption plan and also leads to higher past spending maximum when the wealth level is large. Due to the higher minimum consumption rate at the drawdown constraint level and higher consumption when wealth level is large, it is reasonable to observe that the larger parameter $\nu$ reduces the incentives of portfolio allocation and life insurance when the wealth is sufficient.}

\begin{figure}[htbp]
\centering
\includegraphics[width=\textwidth]{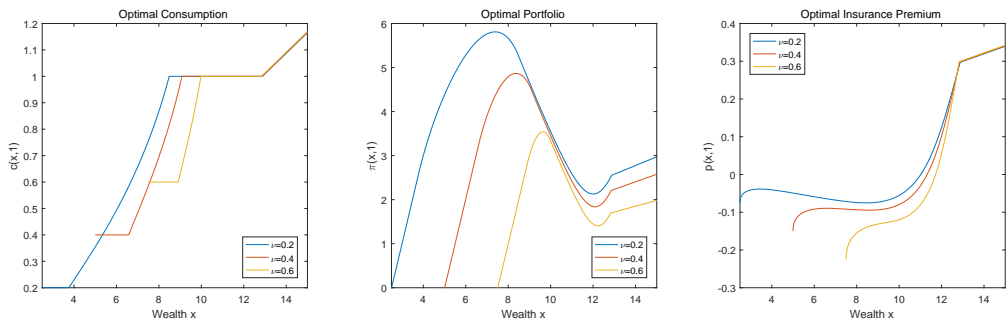}
\vspace{-0.2in}
\caption{{\small Optimal consumption, portfolio and insurance premium for various drawdown constraint parameters.}}
\label{pic: se_nu}
\end{figure}

%

Figures \ref{pic: simu_1_x_5} and \ref{pic: simu_2_x_5} present the simulated paths of the optimal wealth, the optimal consumption, the optimal portfolio, and the optimal life insurance premium in ten years in three different models:
1) our proposed model with life insurance, reference to past spending maximum and drawdown constraint (our model);
{\red2) the shortfall aversion model in \cite{GuasoniHubermanR2020MFF};
3) the standard optimal consumption and life insurance model.}
{\red{}If we does not consider life insurance and drawdown constraint, that is, $\lambda=0$ and $\nu=0$, our model is equivalent to Guasoni's model (Guasoni).
Moreover, a non-habit individual would not be affected by the consumption path in her model and can be characterized by our model if shortfall aversion $\alpha=0$ (non-habit).
We set the initial wealth to be $X_0=3.5$. One can observe that the optimal wealth sample path in the non-habit model dominates other two counterparts, and the optimal wealth path in our model grows slowest due to the life insurance purchase and the consumption reference.
For the same reasoning, the portfolio allocation in our model is also the least.
Regarding the optimal consumption paths, the simulated path in our model is smoother than other two and the overall consumption level is also highest due to the drawdown constraint.
Finally, comparing the demand of life insurance between our model and the non-habit model, our life insurance premium path becomes much smoother, indicating that the reference to past consumption not only leads to the stable consumption behavior, but also helps to smooth out the optimal premium plan.}

\begin{figure}[htbp]
\centering
\includegraphics[width=5.5in]{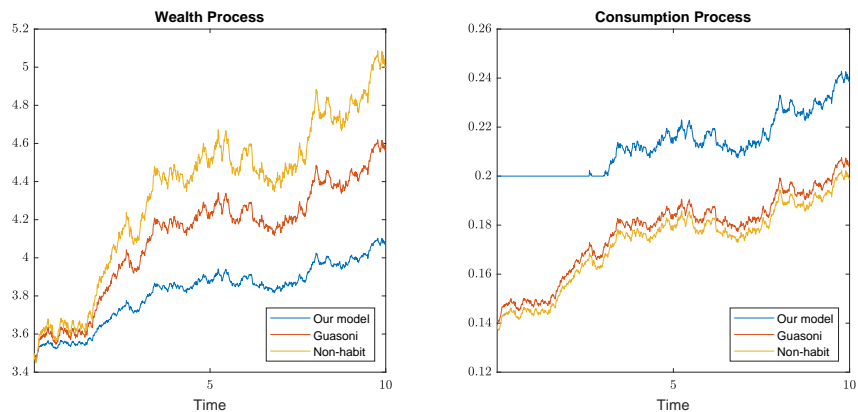}
\vspace{-0.2in}
\caption{{\small Wealth and consumption processes ($X_0=3.5$).}}
\label{pic: simu_1_x_5}
\end{figure}
\begin{figure}[htbp]
\centering
\includegraphics[width=5.5in]{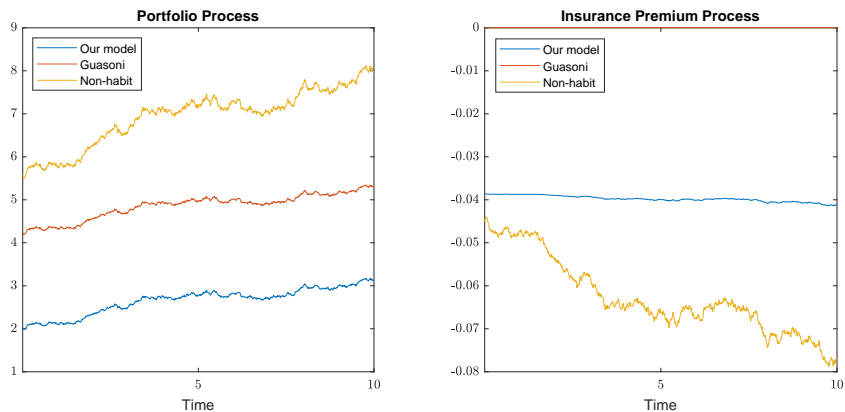}
\vspace{-0.2in}
\caption{{\small Portfolio and life insurance premium processes ($X_0=3.5$).}}
\label{pic: simu_2_x_5}
\end{figure}

{\red\section{Conclusion}\label{sec: conclusion}
In this paper, we study the optimal life insurance problem together with dynamic portfolio and consumption plans in a new framework under the shortfall aversion preference and a drawdown constraint on consumption. For the infinite horizon stochastic control problem, we can find the classical solution to the associated HJB equation in piecewise form and characterize the optimal feedback controls explicitly across different regions. Thanks to our analytical results, we can numerically illustrate the sensitivity results of the optimal strategies on model parameters and conclude some interesting financial implications.

Several directions of future research can be considered. For example, one may consider the problem in the market model with regime switching, and discuss some quantitative changes in optimal strategies in the bull and bear regime states. It will also be appealing to study the more challenging problem over a finite horizon, in which the analytical characterization of the value function is not promising and all boundary curves to distinguish different optimal feedback controls will be time-dependent. Some new techniques are needed to tackle the parabolic PDE problem.}

\section{Proofs}\label{sec: proof}

\subsection{Proof of the verification theorem}
The proof of the theorem is based on some auxiliary results. We first present some asymptotic results on the coefficients in Proposition \ref{prop: dual_solution_beta<1}, whose proof is straightforward and hence omitted.

\begin{remark}\label{lemma: C_order}
Based on the semi-analytical forms in Proposition \eqref{prop: dual_solution_beta<1}, we note that
$$\begin{aligned}
C_2(h) &= O(h^{r_1(1-\alpha)\gamma_1+r_2}), ~
C_3(h) = O(h^{r_2(1-\alpha)\gamma_1+r_1}), ~
C_4(h) = O(h^{r_1(1-\alpha)\gamma_1+r_2}), \\
C_5(h) &= O(h^{r_2(1-\alpha)\gamma_1+r_1}), ~
C_6(h) = O(h^{r_1(1-\alpha)\gamma_1+r_2}),
\end{aligned}$$
as $h\rightarrow+\infty$, which will be used in later proofs.
\end{remark}


By following similar proofs of Lemma 5.1 to Lemma 5.3 in Deng et al. \cite{DengLiPY2021FS} and using asymptotic results in Remark \ref{lemma: C_order}, we can readily obtain the next three lemmas.
\begin{lemma}\label{lemma: 1}
For any $y>0$ and $h\geq0$, the dual transform $v(y,h)$ of value function $u(x,h)$ satisfies
$$
v(y,h) = \mathbb{E}\bigg[ \int_0^\infty e^{-rt} [\tV(Y_t(y), \hat{H}_t(y))+\bar{V}(Y_t(y))]dt\bigg],
$$
where $Y_t(\cdot)$ and $\hat{H}_t(\cdot)$ are defined in \eqref{eq: H_hat}.
\end{lemma}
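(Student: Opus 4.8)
The plan is to prove this representation by a Feynman--Kac/Dynkin argument applied to the smooth dual solution $v(\cdot,h)$. By Proposition \ref{prop: dual_solution_beta<1} together with the two interior smooth-fit conditions, the map $y\mapsto v(y,h)$ is globally $C^1$ and piecewise $C^2$, and it solves the linear ODE \eqref{eq: LODE_beta<1}, $\frac{\kappa^2}{2}y^2 v_{yy} - (r+\lam)v = -\tV$, where I split the dual source into the consumption part $\tV(\cdot,h)$ and the bequest part $\bar V(q)=\lam\sup_{b\geq0}[V(b)-bq]$, the latter depending on $q=Y_t$ alone. In addition $v$ satisfies the Neumann free-boundary condition \eqref{eq: free_boundary_dual}, $v_h=0$ on $\{y=(1-\alpha)h^{(1-\alpha)\gamma_1-1}\}$. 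The first step is to record the dynamics of the driving pair: under the discounted state-price density, $Y_t(y)$ is a geometric Brownian motion with $dY_t=-\kappa Y_t\,dW_t$, while $\hat H_t(y)$ in \eqref{eq: H_hat} is nondecreasing and pathwise continuous, staying flat except exactly on the free boundary $Y_t=(1-\alpha)\hat H_t^{(1-\alpha)\gamma_1-1}$ (it increases precisely when $Y$ attains a new running minimum). Thus $d\hat H_t$ is supported on the boundary set.

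Second, I would apply Itô's formula to $t\mapsto e^{-(r+\lam)t}v(Y_t,\hat H_t)$, the discount rate $r+\lam$ being the one matching the operator in \eqref{eq: LODE_beta<1}. Because $v$ is $C^1$ with piecewise-continuous second $y$-derivative, no local-time term appears at the two interior interfaces $y=\nu^{\gamma_1-1}h^{(1-\alpha)\gamma_1-1}$ and $y=h^{(1-\alpha)\gamma_1-1}$ (they are visited on a Lebesgue-null set of times), and inside each region the computation is classical. The resulting $dt$-drift equals $e^{-(r+\lam)t}\big[\frac{\kappa^2}{2}Y_t^2 v_{yy}-(r+\lam)v\big]$, which by the ODE is exactly $-e^{-(r+\lam)t}(\tV+\bar V)(Y_t,\hat H_t)$; the reference process contributes $e^{-(r+\lam)t}v_h(Y_t,\hat H_t)\,d\hat H_t$, and since $\hat H_t$ increases only where $v_h=0$ by \eqref{eq: free_boundary_dual}, this singular term vanishes. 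Integrating and taking expectations after localizing along stopping times $\tau_n\uparrow\infty$ to make the stochastic integral a true martingale gives
\[
v(y,h)=\mathbb{E}\Big[e^{-(r+\lam)(t\wedge\tau_n)}v(Y_{t\wedge\tau_n},\hat H_{t\wedge\tau_n})\Big]+\mathbb{E}\Big[\int_0^{t\wedge\tau_n}e^{-(r+\lam)s}(\tV+\bar V)(Y_s,\hat H_s)\,ds\Big].
\]

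Third, I would remove the localization and let $t\to\infty$. The crux is the transversality statement $\mathbb{E}\big[e^{-(r+\lam)t}v(Y_t,\hat H_t)\big]\to0$. Here the explicit structure pays off: each branch of $v$ in \eqref{eq: dual_solu_beta<1} is a finite sum of monomials $C_i(h)y^{p}$, so combining the coefficient growth orders recorded in Remark \ref{lemma: C_order} with the exponent inequalities in \textbf{Assumption (A1)} (which force, e.g., $r_1(1-\alpha)\gamma_1+r_2<0$) yields a bound on $|v(Y_t,\hat H_t)|$ by a finite combination of monomials in $Y_t$ and $\hat H_t$. Since $Y_t$ is a GBM with explicit moments and $\hat H_t$ is controlled by the running infimum of $Y$, each such expectation grows like $e^{ct}$ with $c<r+\lam$, so the discounted boundary term decays to zero; the same moment estimates supply the uniform integrability needed to pass to the limit in the Riemann integral, giving the claimed identity.

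I expect the main obstacle to be precisely this last step: verifying transversality and integrability as $t\to\infty$, since $\hat H_t$ is path-dependent (driven by $\inf_{s\le t}Y_s$) and $v$ is only semi-analytic. Matching the growth exponents of $v$ against the discount rate requires careful bookkeeping of the powers $p$ in \eqref{eq: dual_solu_beta<1} and the coefficient asymptotics of Remark \ref{lemma: C_order}, and it is exactly the role of \textbf{Assumption (A1)} to render every resulting exponent admissible. A secondary technical point is the rigorous justification of the Itô formula across the two $C^1$-interfaces and the annihilation of the singular reflection term via \eqref{eq: free_boundary_dual}; both are standard but must be handled with care for the degenerate two-dimensional process $(Y_t,\hat H_t)$.
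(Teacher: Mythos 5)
Your proposal is correct and takes essentially the same route as the paper, which defers the argument to Deng et al.\ but supplies exactly the ingredients you identify: It\^o's formula applied to the discounted dual value along $(Y_t,\hat H_t)$, the ODE \eqref{eq: LODE_beta<1} identifying the drift with the dual source, the free-boundary condition \eqref{eq: free_boundary_dual} annihilating the singular $d\hat H_t$ term, and localization plus the transversality estimates (Lemmas \ref{lemma: 4}, \ref{lemma: 4_2} and \ref{lemma: claim1}--\ref{lemma: claim3}, built on Remark \ref{lemma: C_order} and Assumption (A1)) to pass to the limit. Your reading of the otherwise undefined $\bar{V}$ as the bequest part of the dual and your use of the discount rate $r+\lam$ consistent with the ODE are the correct interpretations of the paper's loosely stated lemma.
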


\begin{lemma}\label{lemma: 2}
For all $y>0$, we have $H_t^\dag = \hat{H}_t(y)$, $t\geq0$, and hence
$$
\mathbb{E}\bigg[ \int_0^\infty e^{-rt} \tV(Y_t(y), H_t^\dag(y))dt \bigg]
= \mathbb{E}\bigg[ \int_0^\infty e^{-rt} \tV(Y_t(y), \hat{H}_t(y))dt \bigg].
$$
\end{lemma}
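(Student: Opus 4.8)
The plan is to reduce the asserted integral identity to the pathwise equality $H_t^\dag(y) = \hat H_t(y)$ for every $t\ge 0$ (a.s.): once the two reference processes coincide as processes, the integrands $\tV(Y_t(y), H_t^\dag(y))$ and $\tV(Y_t(y), \hat H_t(y))$ are literally identical, and the equality of expectations is then immediate. Thus all the substance lies in identifying the implicitly-defined running-maximum process $H_t^\dag(y) = h\vee \sup_{s\le t} c^\dag(Y_s(y), H_s^\dag(y))$ with the explicit reflection $\hat H_t(y)$ in \eqref{eq: H_hat}. Throughout I would use that $Y_t(y) = y\,e^{-\frac{\kappa^2}{2}t - \kappa W_t}$ has continuous, strictly positive paths, so that $\inf_{s\le t} Y_s(y)$ is continuous and nonincreasing in $t$, and hence $\hat H_t(y)$ is continuous and nondecreasing.

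The key structural observation is that the feedback consumption $c^\dag$ in \eqref{eq: c_y_beta<1} increments the reference level only on the free boundary $\mathcal{D}_2$. Writing $\theta := (1-\alpha)\gamma_1 - 1 \in (-1,0)$, one checks that in Regions I, II and III-(i) (that is, for $y > (1-\alpha)h^{\theta}$) the feedback value satisfies $c^\dag(y,h)\le h$, so feeding these values into the running maximum leaves $H^\dag$ flat; only when $(Y_t, H_t^\dag)$ sits on $\mathcal{D}_2=\{y=(1-\alpha)h^{\theta}\}$ does $c^\dag = (y/(1-\alpha))^{1/\theta} = h = H_t^\dag$ create a new peak. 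Hence $H^\dag$ is nondecreasing and can increase only at times $t$ with $Y_t = (1-\alpha)(H_t^\dag)^{\theta}$.

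I would then recast this as a one-sided Skorokhod problem. Because $\theta<0$, the admissibility constraint $Y_t \ge (1-\alpha)(H_t^\dag)^{\theta}$ is equivalent, after applying the decreasing map $x\mapsto x^{1/\theta}$, to $H_t^\dag \ge \phi(Y_t)$, where $\phi(y):=\bigl(y/(1-\alpha)\bigr)^{1/\theta}$ is strictly decreasing and continuous. A nondecreasing process that starts at $h$, dominates $\phi(Y_s)$ for all $s\le t$, and increases only when it touches $\phi(Y_t)$ is uniquely determined as the minimal nondecreasing majorant $h\vee \sup_{s\le t}\phi(Y_s)$. Monotonicity and continuity of $\phi$ together with path-continuity of $Y$ give $\sup_{s\le t}\phi(Y_s) = \phi(\inf_{s\le t}Y_s)$, so this minimal process is exactly $\hat H_t(y) = h\vee \bigl(\inf_{s\le t}Y_s(y)/(1-\alpha)\bigr)^{1/\theta}$. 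Verifying that $H^\dag$ enjoys precisely these three defining properties then forces $H_t^\dag = \hat H_t$.

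The main obstacle is the well-posedness of the implicit recursion defining $H^\dag$ and the rigorous argument that it increases strictly only on $\mathcal{D}_2$; here the sign $\theta<0$, which makes the boundary threshold $(1-\alpha)h^{\theta}$ strictly decreasing in $h$, together with the path continuity of $Y$, is exactly what renders the reflection well-defined and its solution unique. Once the pathwise identity $H_t^\dag = \hat H_t$ is established, the claimed equality of the two expectations follows trivially. I note this parallels Lemma 5.2 of Deng et al. \cite{DengLiPY2021FS}, the life-insurance term entering only through $\tV$ and not affecting the reflection structure governing $H^\dag$.
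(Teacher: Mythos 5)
Your argument is correct and is essentially the proof the paper itself relies on: the paper gives no independent argument for this lemma, deferring to Lemma 5.2 of Deng et al.~\cite{DengLiPY2021FS}, and that proof is exactly the reflection/running-infimum identification you describe, with the life-insurance term playing no role since it enters only through $\tV$. The one step worth tightening is your claim that $H^\dag$ increases only on $\mathcal{D}_2$: in Region III-(i) one also has $c^\dag(Y_s,H_s^\dag)=H_s^\dag$, so the implicit recursion $H_t^\dag=h\vee\sup_{s\le t}c^\dag(Y_s,H_s^\dag)$ by itself admits spurious fixed points that ratchet up off the boundary; you must select the \emph{minimal} nondecreasing solution (equivalently, the one whose increase is carried only on $\{H_t^\dag=\phi(Y_t)\}$, as dictated by the free-boundary condition $u_h=0$ on $\mathcal{D}_2$), after which the Skorokhod uniqueness argument you invoke does yield $H_t^\dag=h\vee\phi(\inf_{s\le t}Y_s(y))=\hat H_t(y)$ and the equality of the two expectations is immediate.
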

%

\begin{lemma}\label{lemma: 3}
The inequality in \eqref{eq: verification_main} becomes equality with $c_t^* = c^\dag(Y_t(y^*), \hat{H}_t(y^*))$ and $b_t^* = b^\dag(Y_t(y^*), \hat{H}_t(y^*))$, $t\geq0$, with $y^* = y^*(x,h)$ as the unique solution to
$$\mathbb{E}[\int_0^\infty (c^\dag(Y_t(y^*), \hat{H}_t(y^*)) + \lam b^\dag(Y_t(y^*), \hat{H}_t(y^*)))M_tdt] = x.$$
\end{lemma}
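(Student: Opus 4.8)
The plan is to split the statement into two independent tasks: the saturation of the inequality in \eqref{eq: verification_main} by the proposed feedback controls, and the existence and uniqueness of the multiplier $y^*$. The inequality in \eqref{eq: verification_main} originates from exactly two estimates, namely the pointwise Legendre inequalities that define $\tV$ (the consumption part $U(c,h)-cq \le \sup_{c\in[\nu h,h]}[U(c,h)-cq]$ and the bequest part $\lam(V(b)-bq)\le \lam\sup_{b\ge 0}[V(b)-bq]$) and the static budget bound \eqref{eq: ECM<=x}. My strategy is to show that the feedback controls turn both estimates into equalities.

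For the saturation, I would fix $y^*$ and set $c_t^* = c^\dag(Y_t(y^*),\hat H_t(y^*))$ and $b_t^* = b^\dag(Y_t(y^*),\hat H_t(y^*))$. By Lemma \ref{lemma: 2} the realized running maximum of $c^*$ equals $H_t^\dag(y^*)=\hat H_t(y^*)$, so the reference level entering the left-hand side of \eqref{eq: verification_main} coincides with the one entering $\tV$ on the right-hand side; this removes any gap between using $H_t$ and $H_t^\dag$. Since $c^\dag(\cdot,\cdot)$ in \eqref{eq: c_y_beta<1} is, region by region, precisely the maximizer of $c\mapsto U(c,\hat H_t)-Y_t(y^*)c$ over $[\nu\hat H_t,\hat H_t]$, and $b^\dag=(Y_t/K)^{1/(\gamma_2-1)}$ is the first-order maximizer of $b\mapsto V(b)-Y_t(y^*)b$, the two Legendre inequalities hold with equality pointwise, hence in expectation. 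The remaining term $y^*\,\mathbb{E}[\int_0^\infty(c_t^*+\lam b_t^*)M_t\,dt]$ equals $y^* x$ precisely by the defining equation of $y^*$, so the chain \eqref{eq: verification_main} collapses to equalities and the objective attains $v(y^*,h)+y^* x$.

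It then remains to produce $y^*$. I would study $\mathcal X(y):=\mathbb{E}[\int_0^\infty(c^\dag(Y_t(y),\hat H_t(y))+\lam b^\dag(Y_t(y),\hat H_t(y)))M_t\,dt]$ and show it is a continuous, strictly decreasing bijection of $(0,\infty)$ onto $(\nu h/(r+\lam),+\infty)$. Writing $\inf_{s\le t}Y_s(y)=y\,\inf_{s\le t}Y_s(1)$ shows that $\hat H_t(y)$ from \eqref{eq: H_hat} is decreasing in $y$ (the exponent $1/((1-\alpha)\gamma_1-1)$ is negative), and a piecewise inspection of \eqref{eq: c_y_beta<1} shows that in the regime where the reference is active the integrand scales like $y^{1/((1-\alpha)\gamma_1-1)}$, while $b^\dag$ scales like $y^{1/(\gamma_2-1)}$; both exponents are negative, so $\mathcal X$ is strictly decreasing and the solution of $\mathcal X(y^*)=x$ is unique. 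For the range, $\hat H_t(y)\downarrow h$, $c^\dag\to\nu h$ and $b^\dag\to 0$ as $y\to\infty$ give $\mathcal X(y)\to \nu h\int_0^\infty \mathbb{E}[M_t]\,dt=\nu h/(r+\lam)$ (using $\mathbb{E}[M_t]=e^{-(r+\lam)t}$), whereas $\hat H_t(y)\uparrow\infty$ and $b^\dag\uparrow\infty$ as $y\to 0^+$ force $\mathcal X(y)\to+\infty$; these are consistent with \eqref{eq: boundary_dual_y->0} and \eqref{eq: boundary_dual_y->infty}. As a consistency check, the dual identity $\mathcal X(y)=-v_y(y,h)$ recovers the same monotonicity from the convexity of $v$ established in Lemma \ref{lemma: beta<1}.

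I expect the main obstacle to be the analytic underpinning of the statements about $\mathcal X$: establishing that $\mathcal X(y)<\infty$ for every $y>0$ and that the passages $y\to 0^+$, $y\to\infty$ and the continuity may be justified by dominated (or monotone) convergence. This requires moment bounds for $(c^\dag+\lam b^\dag)M_t$ that control the negative powers of $\inf_{s\le t}Y_s(y)$ driving $\hat H_t$, and it is precisely here that Assumption (A1) and the growth estimates of Remark \ref{lemma: C_order} enter, in the same way they guarantee the finiteness of $v$ in Lemma \ref{lemma: 1}. A secondary point is the admissibility of $(c^*,\pi^*,b^*)$: the drawdown constraint $c_t^*\ge\nu H_t^*$ is immediate from \eqref{eq: c_y_beta<1}, and the no-bankruptcy and integrability requirements follow from the strong-solution property recorded in Theorem \ref{cor: beta<1}.
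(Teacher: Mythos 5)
Your proof is correct and follows essentially the same route as the argument the paper relies on (the paper does not prove this lemma itself but defers it to the analogous Lemma 5.3 of Deng et al.\ \cite{DengLiPY2021FS}): pointwise saturation of the two Legendre suprema in $\tV$ by the feedback maximizers $c^\dag$ and $b^\dag$, alignment of the realized reference process with $\hat H_t(y^*)$ via Lemma \ref{lemma: 2} so that the only remaining source of inequality is the budget term, saturation of that term by the defining equation of $y^*$, and continuity plus strict monotonicity of the budget map $y\mapsto\mathbb{E}\big[\int_0^\infty(c^\dag+\lam b^\dag)M_t\,dt\big]=-v_y(y,h)$ (equivalently, strict convexity of $v$ from Lemma \ref{lemma: beta<1}) for existence and uniqueness. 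Your closing caveats are the right ones: the finiteness of the budget map and the limit interchanges are exactly where \textbf{Assumption (A1)} and the growth estimates of Remark \ref{lemma: C_order} are consumed, and, strictly speaking, the budget equation admits a finite root only for $x>\nu h/(r+\lam)$, the boundary case corresponding to the degenerate no-trading solution that the paper treats separately through the boundary conditions.
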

Let us continue to prove some other auxiliry results.
\begin{lemma}\label{lemma: 4}
The following transversality condition holds that for all $y>0$,
$$
\lim\limits_{T\rightarrow+\infty} \mathbb{E}\bigg[ e^{-rT}v(Y_T(y), \hat{H}_T(y)) \bigg] = 0.
$$
\end{lemma}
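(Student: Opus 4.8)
The plan is to bypass direct growth estimates on $v$ by combining the representation of $v$ from Lemma \ref{lemma: 1} with the Markov property of the state process $(Y_t(y),\hat H_t(y))$, which turns the transversality statement into the vanishing of the tail of a convergent integral. First I record that $Y_t(y)=ye^{-\frac{\kappa^2}{2}t-\kappa W_t}$ is an exponential martingale and that $\hat H_t(y)=h\vee g\big(\inf_{s\leq t}Y_s(y)\big)$, where $g(I):=(1-\alpha)^{-m}I^{m}$ and $m:=\frac{1}{(1-\alpha)\gamma_1-1}<0$, so $\hat H_t$ is a nondecreasing functional of the running infimum $I_t:=\inf_{s\leq t}Y_s(y)$. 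A short check shows $(Y_t,\hat H_t)$ is a time-homogeneous strong Markov process: since $g$ is decreasing, $\hat H_t=\hat H_T\vee g\big(\inf_{T\leq u\leq t}Y_u\big)$ for $t\geq T$, so the post-$T$ evolution of $\hat H$ depends on the past only through $(Y_T,\hat H_T)$.

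Granting this, I apply Lemma \ref{lemma: 1} at the random starting point $(Y_T(y),\hat H_T(y))$. Conditioning on $\mathcal{F}_T$ and using time-homogeneity, the restarted chain $(Y_{T+s},\hat H_{T+s})_{s\geq0}$ has the law of the process issued from $(Y_T,\hat H_T)$, whence
\begin{equation}
v(Y_T(y),\hat H_T(y))=\mathbb{E}\bigg[\int_T^\infty e^{-r(t-T)}\big(\tV(Y_t,\hat H_t)+\bar V(Y_t)\big)\,dt\,\Big|\,\mathcal{F}_T\bigg]. \nonumber
\end{equation}
Multiplying by $e^{-rT}$ and taking expectations yields
\begin{equation}
\mathbb{E}\big[e^{-rT}v(Y_T(y),\hat H_T(y))\big]=\mathbb{E}\bigg[\int_T^\infty e^{-rt}\big(\tV(Y_t,\hat H_t)+\bar V(Y_t)\big)\,dt\bigg], \nonumber
\end{equation}
and the right-hand side tends to $0$ as $T\to\infty$ by dominated convergence, provided
\begin{equation}
\int_0^\infty e^{-rt}\,\mathbb{E}\big[\,|\tV(Y_t,\hat H_t)|+|\bar V(Y_t)|\,\big]\,dt<\infty. \nonumber
\end{equation}
This absolute integrability is essentially the estimate already underlying Lemma \ref{lemma: 1}, so the substance of the proof is to make it quantitative.

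To verify the integrability I would bound $\tV$ and $\bar V$ piecewise from \eqref{eq: V_def}, inserting the asymptotic orders of the $C_i(h)$ from Remark \ref{lemma: C_order}, thereby reducing each piece to finitely many monomials $Y_t^{a}\hat H_t^{b}$. Writing $\hat H_t=h\vee g(I_t)$ I bound $\hat H_t^{b}\leq h^{b}+c\,I_t^{mb}$ when $b\geq0$ (and use $\hat H_t\geq h$ when $b<0$), so that every term is dominated by a joint moment $\mathbb{E}[Y_t^{a}I_t^{e}]$. With $Y_t=ye^{X_t}$, $I_t=ye^{\underline X_t}$, $X_t=-\frac{\kappa^2}{2}t-\kappa W_t$ and $\underline X_t=\inf_{s\leq t}X_s$, these moments are explicit via the joint law of a drifted Brownian motion and its running minimum (reflection principle), producing a growth rate $e^{\Lambda(a,e)t}$. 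The quantitative heart of the matter is that the identities $\frac{\kappa^2}{2}r_i(r_i-1)=r+\lam$, the negativity $\beta_1,\beta_2<0$, and \textbf{Assumption (A1)} (which fixes $\gamma_2\leq(1-\alpha)\gamma_1<-r_2/r_1$) together force $\Lambda(a,e)<r$ for every exponent pair appearing in the expansion.

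The main obstacle is exactly this last point. The terms carrying $Y_t^{r_2}$ with $r_2<0$ have, taken alone, rate $\frac{\kappa^2}{2}r_2(r_2-1)=r+\lam>r$, so $e^{-rt}\mathbb{E}[Y_t^{r_2}]=y^{r_2}e^{\lam t}$ and they are \emph{not} $e^{-rt}$-integrable in isolation; integrability is recovered only because each such term is multiplied by a power of $\hat H_t$, equivalently a decaying negative power $I_t^{mb}$ of the running infimum. One must therefore exploit the genuine dependence between $Y_t$ and $I_t$ — decoupling them crudely via Hölder would be too lossy — and show the gain from this coupling strictly dominates, which is precisely where \textbf{Assumption (A1)} enters to keep the combined rate below $r$. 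Once the joint-moment bounds are established, the conclusion follows immediately from the dominated-convergence argument above.
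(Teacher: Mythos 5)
Your argument is correct in outline but takes a genuinely different route from the paper. The paper proceeds by direct substitution: it plugs the explicit piecewise formula \eqref{eq: dual_solu_beta<1} for $v$ into $\mathbb{E}[e^{-rT}v(Y_T(y),\hat H_T(y))]$, treats the three regions separately, and uses the region constraints (e.g.\ $Y_T\geq \hat H_T^{(1-\alpha)\gamma_1-1}$) together with the orders of $C_i(h)$ from Remark \ref{lemma: C_order} to reduce every term --- including the homogeneous terms $C_i(\hat H_T)Y_T^{r_i}$ --- to one of $e^{-rT}\hat H_T^{(1-\alpha)\gamma_1}$, $e^{-rT}Y_T^{\beta}$ or $e^{-rT}\hat H_T Y_T$, which are then killed by Lemmas \ref{lemma: claim1}--\ref{lemma: claim3}. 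You instead combine the stochastic representation of Lemma \ref{lemma: 1} with the Markov property of $(Y_t,\hat H_t)$ (your verification that $\hat H_t=\hat H_T\vee g(\inf_{T\leq u\leq t}Y_u)$ is the right and sufficient observation) to identify $\mathbb{E}[e^{-rT}v(Y_T,\hat H_T)]$ with the tail $\mathbb{E}[\int_T^\infty e^{-rt}(\tV+\bar V)\,dt]$, so that the lemma becomes the vanishing of the tail of an absolutely convergent integral. This buys a uniform treatment of the three regions and, notably, the integrand $\tV+\bar V$ contains only the monomials $Y_t^{\beta_2}$, $\hat H_t^{(1-\alpha)\gamma_1}$, $\hat H_tY_t$ and $\hat H_t^{\alpha\beta_1}Y_t^{\beta_1}$, so the coefficients $C_i(h)$ and the powers $Y_t^{r_1},Y_t^{r_2}$ never enter; the absolute integrability you still owe reduces exactly to the joint-moment bounds that the paper isolates as Lemmas \ref{lemma: claim1}--\ref{lemma: claim3}. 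Two caveats. First, your stated ``main obstacle'' is misplaced: no $q^{r_2}$ power appears in $\tV$ as defined in \eqref{eq: V_def} (those powers live only in $v$ itself, where the paper neutralizes them via the decay of $C_i(\hat H_T)$ and the region constraints), so in your decomposition the coupling between $Y_t$ and its running infimum is needed only for the terms $\hat H_tY_t$ and $\hat H_t^{(1-\alpha)\gamma_1}$ --- precisely the content of Lemmas \ref{lemma: claim1} and \ref{lemma: claim3}. Second, your route leans on Lemma \ref{lemma: 1}, which the paper imports from \cite{DengLiPY2021FS} without proof and whose own justification requires the same integrability estimates; so your approach repackages rather than avoids the quantitative work, but it does dispense with the case-by-case bookkeeping on the $C_i$.
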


\begin{proof}
Let us recall that
$$
\hat{H}_t(y) := h\vee \bigg( (1-\alpha)^{-\frac{1}{(1-\alpha)\gamma_1-1}} (\inf_{s\leq t}Y_s(y))^{\frac{1}{(1-\alpha)\gamma_1-1}} \bigg).
$$

Let us firstly consider the case $c_T = 0$.
We first write that
\begin{equation}\label{eq: trans_R1}
\begin{aligned}
e^{-rT}\E[v(Y_T(y),\hat{H}_T(y))] = e^{-rT}\E\bigg[C_2(\hat{H}_T(y)) Y_T(y)^{r_2} + \frac{2\lam K^{1-\beta_2}}{\kappa^2\beta_2(\beta_2-r_1)(\beta_2-r_2)}Y_T(y)^{\beta_2}  \\
+ \frac{\nu^\gamma}{(r+\lam)\gamma_1}\hat{H}_T(y)^{(1-\alpha)\gamma} - \frac{\nu \hat{H}_T(y)}{r+\lam} Y_T(y)\bigg],
\end{aligned}
\end{equation}
{\red where the last two terms can vanish due to  Lemma \ref{lemma: claim1} and Lemma \ref{lemma: claim3} respectively, and the last third term can also vanish because of Lemma \ref{lemma: claim2} and the fact $\beta_2>r_2$ by \textbf{Assumption (A1)}.}
For the first term in \eqref{eq: trans_R1}, since $Y_T(y) > \hat{H}_T(y)^{(1-\alpha)\gamma_1-1}$, we have
$$
\begin{aligned}
e^{-rT}\E\bigg[ C_2(\hat{H}_T(y)) (Y_T(y))^{r_2} \bigg]
&= O\big(e^{-rT}C_2(\hat{H}_T(y)) \hat{H}_T(y)^{r_2((1-\alpha)\gamma_1-1)} \big) \\
&= O\big(e^{-rT}\hat{H}_T(y)^{r_1(1-\alpha)\gamma_1+r_2} \hat{H}_T(y)^{r_2((1-\alpha)\gamma_1-1)} \big) \\
&= O\big(e^{-rT}\hat{H}_T(y)^{(1-\alpha)\gamma_1}\big),
\end{aligned}
$$
which vanishes as $T\rightarrow+\infty$ due to Lemma \ref{lemma: claim1}.

We then consider the case $0<c_T<\hat{H}_T(y)$.
\begin{equation}\label{eq: trans_R2}\begin{aligned}
&\E[e^{-rT}v(Y_T(y),\hat{H}_T(y))] \\
= &e^{-rT}\E\bigg[
C_3(\hat{H}_T(y)) Y_T(y)^{r_1} + C_4(\hat{H}_T(y)) Y_T(y)^{r_2}\\
&+ \frac{2\lam K^{1-\beta_2}}{\kappa^2\beta_2(\beta_2-r_1)(\beta_2-r_2)}Y_T(y)^{\beta_2}+ \frac{2\hat{H}_T(y)^{\alpha\beta_1}}{\kappa^2\beta_1(\beta_1-r_1)(\beta_1-r_2)}Y_T(y)^{\beta_1} \bigg].
\end{aligned}\end{equation}
We consider asymptotic behavior of the above equation term by term as $T\rightarrow+\infty$.

{\red Thanks to \textbf{Assumption (A1)}, $\beta_2>r_2$,} the third term can vanish due to Lemma \ref{lemma: claim2}.
For the fourth term in \eqref{eq: trans_R2}, since $Y_T(y) \geq \hat{H}_T(y)^{(1-\alpha)\gamma_1-1}$ and $\beta_1 = \frac{\gamma_1}{\gamma_1-1}<0$, we have
$$\begin{aligned}
\E[e^{-rT}\hat{H}_T(y)^{\alpha\beta_1}Y_T(y)^{\beta_1}]
= {\red O\big(e^{-rT}\E\big[\hat{H}_T(y)^{\alpha\beta_1+\beta_1((1-\alpha)\gamma_1-1)}\big]\big)
= O\big(e^{-rT}\E\big[\hat{H}_T(y)^{(1-\alpha)\gamma_1}\big]\big)},
\end{aligned}$$
which also vanishes as $T\rightarrow+\infty$ due to {\red Lemma \ref{lemma: claim1}}.

Let us consider the terms containing $C_3(\hat{H}_T(y))$ and $C_4(\hat{H}_T(y))$ in equation \eqref{eq: trans_R2}.
Because of the constraint $Y_t(y) = O(\hat{H}_T(y)^{(1-\alpha)\gamma_1-1})$, we can deduce that
$$
\begin{aligned}
e^{-rT}\E\bigg[ C_3(\hat{H}_T(y)) (Y_T(y))^{r_1} \bigg]
&= O\big(e^{-rT}C_3(\hat{H}_T(y)) \hat{H}_T(y)^{r_1((1-\alpha)\gamma_1-1)} \big) \\
&= O\big(e^{-rT}\hat{H}_T(y)^{r_2(1-\alpha)\gamma_1+r_1} \hat{H}_T(y)^{r_1((1-\alpha)\gamma_1-1)} \big) \\
&= O\big(e^{-rT}\hat{H}_T(y)^{(1-\alpha)\gamma_1}\big),
\end{aligned}
$$
which converges to 0 by Lemma \ref{lemma: claim1}. \\

In addition, since $Y_T(y) \geq \hat{H}_T(y)^{(1-\alpha)\gamma_1-1}$ and $r_2<0$, we obtain
$$
\begin{aligned}
e^{-rT}\E\bigg[ C_4(\hat{H}_T(y)) (Y_T(y))^{r_2} \bigg]
&= O\big(e^{-rT}C_4(\hat{H}_T(y)) \hat{H}_T(y)^{r_2((1-\alpha)\gamma_1-1)} \big) \\
&= O\big(e^{-rT}\hat{H}_T(y)^{r_1(1-\alpha)\gamma_1+r_2} \hat{H}_T(y)^{r_2((1-\alpha)\gamma_1-1)} \big) \\
&= O\big(e^{-rT}\hat{H}_T(y)^{(1-\alpha)\gamma_1}\big),
\end{aligned}
$$
which vanishes as $T\rightarrow+\infty$ {\red by Lemma \ref{lemma: claim1}}.

Finally, we consider the case $C_T = \hat{H}_T(y)$ and write that
\begin{equation}\label{eq: trans_R3}
\begin{aligned}
&\E[e^{-rT}v(Y_T(y), \hat{H}_T(y))] \\
= &e^{-rT}\E\bigg[ C_5(\hat{H}_T(y))Y_T(y)^{r_1}
+ C_6(\hat{H}_T(y))Y_T(y)^{r_2} \\
&+ \frac{2\lam K^{1-\beta_2}}{\kappa^2\beta_2(\beta_2-r_1)(\beta_2-r_2)}Y_T(y)^{\beta_2}+ \frac{1}{(r+\lam)\gamma_1}\hat{H}_T(y)^{(1-\alpha)\gamma_1} - \frac{\hat{H}_T(y)}{(r+\lam)}Y_T(y)\bigg],
\end{aligned}
\end{equation}
where the last three terms converge to 0 by Lemma \ref{lemma: claim2} with {\red\textbf{Assumption (A1)}}, Lemma \ref{lemma: claim1}, and Lemma \ref{lemma: claim3}, respectively.

For the first term in \eqref{eq: trans_R3}, since $Y_T(y)\leq \hat{H}_T(y)^{(1-\alpha)\gamma_1-1}$, we have
$$
\begin{aligned}
e^{-rT}\E\bigg[ C_5(\hat{H}_T(y)) (Y_T(y))^{r_1} \bigg]
&= O\big(e^{-rT}C_5(\hat{H}_T(y)) \hat{H}_T(y)^{r_1((1-\alpha)\gamma_1-1)} \big) \\
&= O\big(e^{-rT}\hat{H}_T(y)^{r_2(1-\alpha)\gamma_1+r_1} \hat{H}_T(y)^{r_1((1-\alpha)\gamma_1-1)} \big) \\
&= O\big(e^{-rT}\hat{H}_T(y)^{(1-\alpha)\gamma_1}\big),
\end{aligned}
$$
which converges to 0 as $T\rightarrow+\infty$ {\red by Lemma \ref{lemma: claim1}}. \\
For the second term in \eqref{eq: trans_R3}, by $Y_T(y)\geq (1-\alpha)\hat{H}_T(y)^{(1-\alpha)\gamma_1-1}$ and $r_2<0$, we have
$$
\begin{aligned}
e^{-rT}\E\bigg[ C_6(\hat{H}_T(y)) (Y_T(y))^{r_2} \bigg]
&= O\big(e^{-rT}C_6(\hat{H}_T(y)) \hat{H}_T(y)^{r_2((1-\alpha)\gamma_1-1)} \big) \\
&= O\big(e^{-rT}\hat{H}_T(y)^{r_1(1-\alpha)\gamma_1+r_2} \hat{H}_T(y)^{r_2((1-\alpha)\gamma_1-1)} \big) \\
&= O\big(e^{-rT}\hat{H}_T(y)^{(1-\alpha)\gamma_1}\big),
\end{aligned}
$$
which also vanishes as $T\rightarrow+\infty$ {\red by Lemma \ref{lemma: claim1}}.
Therefore, we get the desired result.
\end{proof}

\begin{lemma}\label{lemma: 4_2}
For any $T>0$, we have
$$
\lim\limits_{n\rightarrow+\infty} \E\big[e^{-r\tau_n}v(Y_{\tau_n}(y), \hat{H}_{\tau_n}(y)) \mathbf{1}_{\{T> \tau_n\}} \big] = 0,
$$
where $\tau_n$ is defined by
$$
\tau_n = \inf\{t\geq0 | Y_t(y)\geq n, \hat{H}_t(y)\geq \big((1-\alpha)n\big)^{-\frac{1}{(1-\alpha)\gamma_1-1}} \}.
$$
\end{lemma}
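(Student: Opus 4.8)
The plan is to establish the limit by dominated convergence, keeping $T>0$ fixed and letting $n\to+\infty$. The starting observation is that $\tau_n$ is nondecreasing in $n$ and $\tau_n\uparrow+\infty$ almost surely: since $t\mapsto Y_t(y)=y\,e^{-\kappa^2 t/2-\kappa W_t}$ is continuous and positive, the running maximum $Y_T^\ast:=\sup_{t\le T}Y_t(y)$ is a.s. finite, so for every $\omega$ and every $n>Y_T^\ast(\omega)$ there is no $t\le T$ with $Y_t(y)\ge n$, which forces $\tau_n(\omega)>T$. Hence $\mathbf 1_{\{T>\tau_n\}}\to 0$ a.s. and the integrand $e^{-r\tau_n}v(Y_{\tau_n}(y),\hat H_{\tau_n}(y))\mathbf 1_{\{T>\tau_n\}}$ converges to $0$ pointwise. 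It then suffices to produce an $n$-independent integrable dominating random variable.

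Next I would pin down the state at the stopping time. The running-minimum condition implicit in $\tau_n$ is monotone in $t$, so by continuity of $Y$ the binding constraint is the non-monotone one, giving $Y_{\tau_n}(y)=n$ exactly on $\{\tau_n<\infty\}$; moreover the second requirement in the definition of $\tau_n$ gives, by definition, $\hat H_{\tau_n}(y)\ge\big((1-\alpha)n\big)^{-1/((1-\alpha)\gamma_1-1)}=\big((1-\alpha)n\big)^{1/(1-(1-\alpha)\gamma_1)}$, a bound of order $n^{1/(1-(1-\alpha)\gamma_1)}$ that diverges \emph{faster} than $n$ (the exponent exceeds $1$). Since $(1-\alpha)\gamma_1-1<0$, this yields $\nu^{\gamma_1-1}\hat H_{\tau_n}(y)^{(1-\alpha)\gamma_1-1}\le \nu^{\gamma_1-1}(1-\alpha)^{-1}n^{-1}<n=Y_{\tau_n}(y)$ for all $n$ beyond a deterministic $N_0$. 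Thus for $n\ge N_0$ the pair $(Y_{\tau_n},\hat H_{\tau_n})$ lies in Region I on $\{\tau_n<T\}$, and the first branch of \eqref{eq: dual_solu_beta<1} supplies the explicit value of $v(Y_{\tau_n},\hat H_{\tau_n})$.

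With this explicit form I would dominate $|v(Y_{\tau_n},\hat H_{\tau_n})|\mathbf 1_{\{\tau_n<T\}}$ term by term, using $\hat H_{\tau_n}\le \hat H_T$ (monotonicity of $\hat H$) and $Y_{\tau_n}=n\le Y_T^\ast$. The two power terms $y^{r_2}$ and $y^{\beta_2}$ carry negative exponents, so $n^{r_2},n^{\beta_2}\le 1$; combined with $C_2(\hat H_{\tau_n})=O(\hat H_{\tau_n}^{r_1(1-\alpha)\gamma_1+r_2})$ from Remark \ref{lemma: C_order} and the deterministic positive lower bound on $\hat H_{\tau_n}$, these are controlled by $C(1+\hat H_T^{(r_1(1-\alpha)\gamma_1+r_2)^+})$. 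The term $\frac{\nu^{\gamma_1}}{(r+\lambda)\gamma_1}\hat H_{\tau_n}^{(1-\alpha)\gamma_1}$ is dominated by $\hat H_T^{(1-\alpha)\gamma_1}$, and the last term is dominated by $\frac{\nu}{r+\lambda}\hat H_T\,Y_T^\ast$. All these lie in $L^1$ because $Y_T^\ast$ and $\hat H_T$ are positive, respectively negative, powers of exponentials of the supremum/infimum of a Brownian motion with drift on $[0,T]$, which have moments of every order (by reflection $\sup_{t\le T}(\pm W_t)\overset{d}{=}|W_T|$ has all exponential moments); one Cauchy--Schwarz estimate handles the product $\hat H_T\,Y_T^\ast$. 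Collecting the bounds gives an integrable dominator $G$ independent of $n$, and dominated convergence yields the claim.

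The main obstacle is the cross term $\frac{\nu}{r+\lambda}\hat H_{\tau_n}Y_{\tau_n}$ appearing in the Region I expression: unlike the other terms it grows polynomially in $n$ (of order $n^{\,1+1/(1-(1-\alpha)\gamma_1)}$), so its smallness is not pointwise-obvious and one genuinely needs the joint integrability of the running maximum of $Y$ together with a negative power of its running minimum. This is precisely where the finiteness of all moments of Brownian extrema, together with Cauchy--Schwarz, is indispensable. A secondary technical point is to confirm that $(Y_{\tau_n},\hat H_{\tau_n})$ stays in Region I uniformly over $\{\tau_n<T\}$ for all large $n$, which is secured by the deterministic lower bound on $\hat H_{\tau_n}$ derived above.
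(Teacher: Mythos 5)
Your route is genuinely different from the paper's: you argue by dominated convergence ($\tau_n\uparrow\infty$ a.s.\ forces the indicator to vanish pointwise, and you build an $n$-independent integrable envelope from $\hat H_T$ and $Y_T^\ast$), whereas the paper bounds $v(Y_{\tau_n},\hat H_{\tau_n})$ deterministically by $O(n^{r^\ast})$ on $\{\tau_n\le T\}$ in each of the three regions and multiplies by the tail estimate $\E[\mathbf 1_{\{\tau_n\le T\}}]\le n^{-2\xi}(1+y^{2\xi})e^{CT}$ with $\xi$ chosen large, as in Guasoni et al. Your scheme is arguably more elementary and avoids that tail estimate; the moment bounds you invoke ($\sup_{s\le T}(\pm W_s)$ has all exponential moments, Cauchy--Schwarz for the product $\hat H_T Y_T^\ast$) are all sound.

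There is, however, one concrete gap: your reduction to Region I. You read the two conditions in the definition of $\tau_n$ conjunctively, concluding $Y_{\tau_n}=n$ together with $\hat H_{\tau_n}\ge\big((1-\alpha)n\big)^{1/(1-(1-\alpha)\gamma_1)}$, which indeed would place $(Y_{\tau_n},\hat H_{\tau_n})$ in the first branch of \eqref{eq: dual_solu_beta<1} for large $n$. But the paper's proof makes the intended meaning unambiguous: it asserts that for all $t\le\tau_n$ one has $Y_t(y)\in[\tfrac1n,n]$, i.e.\ $\tau_n$ is the exit time of $Y$ from $[\tfrac1n,n]$ (the condition on $\hat H_t$ is just the running-minimum condition $\inf_{s\le t}Y_s\le\tfrac1n$ in disguise, and the two conditions are alternatives, not simultaneous requirements). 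Under that reading, $\tau_n$ may be triggered by the lower barrier, in which case $Y_{\tau_n}\approx\tfrac1n$ and the stopped state lies in the second or third branch of $v$, where terms such as $C_3(h)y^{r_1}$, $h^{\alpha\beta_1}y^{\beta_1}$ and $hy$ appear; your envelope covers none of these. The gap is repairable within your framework -- dominate the negative powers $Y_{\tau_n}^{r_2},Y_{\tau_n}^{\beta_1},Y_{\tau_n}^{\beta_2}$ by the corresponding negative powers of $\inf_{s\le T}Y_s$ (which have finite expectation by the same exponential-moment argument) and the positive power $Y_{\tau_n}^{r_1}$ by $(Y_T^\ast)^{r_1}$, then apply Cauchy--Schwarz to the mixed terms -- but as written the case analysis is incomplete and the claimed identification $Y_{\tau_n}=n$ is not justified for the $\tau_n$ the lemma actually concerns.
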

\begin{proof}
By the definition of $\tau_n$, for all $t\leq\tau_n$, we have $Y_t(y) \in \big[\frac1n, n\big]$, and thus
$$h\leq \hat{H}_t(y) \leq \max(h, \big((1-\alpha)n\big)^{-\frac{1}{(1-\alpha)\gamma_1-1}}) = O(1) + O(n^{-\frac{1}{(1-\alpha)\gamma_1-1}}). $$
Therefore, we have $Y_t(y)^{r_1} \leq n^{r_1}$, $Y_t(y)^{r_2} \leq \big(\frac1n \big)^{r_2}= n^{-r_2}$.
Then we shall obtain the order of $v(Y_{\tau_n}(y), \hat{H}_{\tau_n}(y))$ in three cases, in the sense that $c_{\tau_n}^*=0$, $0<c_{\tau_n}^*<\hat{H}_{\tau_n}(y)$, and $c_{\tau_n}^* = \hat{H}_{\tau_n}(y)$.

Similar to the proof of Lemma \ref{lemma: 4}, if $c_{\tau_n}^*=0$, we have
$$
\begin{aligned}
v(Y_{\tau_n}(y), \hat{H}_{\tau_n}(y))
&= C_2(\hat{H}_{\tau_n}(y))Y_{\tau_n}(y)^{r_2} + \frac{2\lam K^{1-\beta_2}}{\kappa^2\beta_2(\beta_2-r_1)(\beta_2-r_2)}Y_{\tau_n}(y)^{\beta_2}  \\
&~~+\frac{\nu^\gamma}{(r+\lam)\gamma_1}\hat{H}_{\tau_n}(y)^{(1-\alpha)\gamma_1} - \frac{\nu \hat{H}_{\tau_n}(y)}{r+\lam} Y_{\tau_n}(y) \\
&= O(n^{-r_2}) + O(n^{-\beta_2}) + O(n^{-\frac{(1-\alpha)\gamma_1}{(1-\alpha)\gamma_1-1}}) + O(n^{\frac{(1-\alpha)\gamma_1-2}{(1-\alpha)\gamma_1-1}}) \\
&= O(n^{r^*}),
\end{aligned}
$$
where $r^*:= \max\big\{-r_2, {\red-\beta_1,} \frac{(1-\alpha)\gamma_1-2}{(1-\alpha)\gamma_1-1}, \frac{(r_1-r_2)(1-\alpha)\gamma_1-2r_1}{(\alpha-1)\gamma_1-1}\big\}$.
{\red Here, we have $-\beta_2<-r_2$ by \textbf{Assumption (A1)}. }
If $0<c_{\tau_n}^*<\hat{H}_{\tau_n}(y)$, we have
$$
\begin{aligned}
v(Y_{\tau_n}(y),\hat{H}_{\tau_n}(y)) = &
C_3(\hat{H}_{\tau_n}(y)) Y_{\tau_n}(y)^{r_1} + C_4(\hat{H}_{\tau_n}(y)) Y_{\tau_n}(y)^{r_2}\\
&~~+ \frac{2\lam K^{1-\beta_2}}{\kappa^2\beta_2(\beta_2-r_1)(\beta_2-r_2)}Y_{\tau_n}(y)^{\beta_2} +\frac{2\hat{H}_{\tau_n}(y)^{\alpha\beta_1}}{\kappa^2\beta_1(\beta_1-r_1)(\beta_1-r_2)}Y_{\tau_n}(y)^{\beta_1} \\
&= O(n^{\frac{(r_1-r_2)(1-\alpha)\gamma_1-2r_1}{(\alpha-1)\gamma_1-1}}) + O(n^{-r_2}) + O(n^{-\beta_2}) + O(n^{-\beta_1}) \\
&= O(n^{r^*}).
\end{aligned}
$$
If $c_{\tau_n} = \hat{H}_{\tau_n}(y)$, we have
$$\begin{aligned}
v(Y_{\tau_n}(y), \hat{H}_{\tau_n}(y))
&=  C_5(\hat{H}_{\tau_n}(y))Y_{\tau_n}(y)^{r_1} + C_6(\hat{H}_{\tau_n}(y))Y_{\tau_n}(y)^{r_2} \\
&~~+ \frac{2\lam K^{1-\beta_2}}{\kappa^2\beta_2(\beta_2-r_1)(\beta_2-r_2)}Y_{\tau_n}(y)^{\beta_2}+ \frac{1}{(r+\lam)\gamma_1}\hat{H}_{\tau_n}(y)^{(1-\alpha)\gamma_1} - \frac{\hat{H}_{\tau_n}(y)}{r+\lam}Y_T(y)\\
&= O(n^{\frac{(r_1-r_2)(1-\alpha)\gamma_1-2r_1}{(\alpha-1)\gamma_1-1}}) + O(n^{-r_2}) + O(n^{-\beta_2}) + O(n^{\frac{-(1-\alpha)\gamma_1}{(1-\alpha)\gamma_1-1}}) + O(n^{\frac{(1-\alpha)\gamma_1-2}{(1-\alpha)\gamma_1-1}}) \\
&= O(n^{r^*}).
\end{aligned}$$
Therefore, in all the cases, $v(Y_{\tau_n}(y), \hat{H}_{\tau_n}(y)) = O(n^{r^*})$.
In addition, similar to the proof of (A.25) in \cite{GuasoniHubermanR2020MFF}, there exists some constant $C$ such that
$$
\E[\mathbf{1}_{\{\tau_n \leq T\}}] \leq n^{-2\xi}(1+y^{2\xi})e^{CT},
$$
for any $\xi\geq 1$.
Putting all the pieces together, the desired claim holds that
$$
\lim\limits_{n\rightarrow+\infty} \E\big[e^{-r\tau_n}v(Y_{\tau_n}(y), \hat{H}_{\tau_n}(y)) \mathbf{1}_{\{T> \tau_n\}} \big] = 0.
$$
\end{proof}

\begin{lemma}\label{lemma: claim1}
For $\gamma_1$ that satisfies \textbf{Assumption (A1)}, we have
\begin{equation}
\lim\limits_{T\rightarrow+\infty}\E\bigg[e^{-rT}\hat{H}_T(y)^{\gamma_1^*}\bigg] = 0,
\end{equation}
where $\gamma_1^* := (1-\alpha)\gamma_1$.
\end{lemma}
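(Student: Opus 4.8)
The plan is to reduce the statement to a bound on a single negative moment of the running infimum of the state-price density, and then to read off the exponential growth rate through a first-passage computation whose threshold is governed precisely by \textbf{Assumption (A1)}.

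First I would use the explicit form $Y_t(y) = y e^{(r+\lam)t}M_t = y\,e^{-\frac{\kappa^2}{2}t-\kappa W_t}$, so that $\inf_{s\le T}Y_s(y)= y\,e^{m_T}$ with $m_T:=\inf_{s\le T}Z_s$ and $Z_s:=-\frac{\kappa^2}{2}s-\kappa W_s$. Setting $\theta:=\frac{1}{(1-\alpha)\gamma_1-1}<0$ and $p:=\theta\gamma_1^*=\frac{(1-\alpha)\gamma_1}{(1-\alpha)\gamma_1-1}<0$, the elementary subadditivity $(h\vee B)^{\gamma_1^*}\le h^{\gamma_1^*}+B^{\gamma_1^*}$ (valid since $0<\gamma_1^*<1$) applied to the definition \eqref{eq: H_hat} of $\hat H_T(y)$ gives
\[
\hat H_T(y)^{\gamma_1^*}\le h^{\gamma_1^*}+(1-\alpha)^{-p}\,y^{p}\,e^{p m_T}.
\]
Since the first term contributes $e^{-(r+\lam)T}h^{\gamma_1^*}\to0$, it suffices to show that $e^{-(r+\lam)T}\,\mathbb{E}[e^{p m_T}]\to0$ (the effective discount being $r+\lam$, consistent with the objective \eqref{eq: problem} and $\rho=r$).

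Next I would obtain the growth rate of $\mathbb{E}[e^{p m_T}]$. Writing the pathwise identity $e^{pm_T}=1+|p|\int_{-\infty}^{0}e^{pa}\mathbf 1_{\{m_T\le a\}}\,da$ and using that, for $a<0$, $\{m_T\le a\}=\{\tau_a\le T\}$ with $\tau_a:=\inf\{t:Z_t=a\}$, Tonelli yields $\mathbb{E}[e^{pm_T}]=1+|p|\int_{-\infty}^0 e^{pa}\mathbb{P}(\tau_a\le T)\,da$. For $s>0$, a Chernoff bound gives $\mathbb{P}(\tau_a\le T)\le e^{sT}\mathbb{E}[e^{-s\tau_a}]$, and optional stopping on the martingale $e^{r_2(s)Z_t-st}$ — which is bounded by $e^{r_2(s)a}$ on $[0,\tau_a]$ because $r_2(s)<0$ while $Z_t\ge a$ there — gives $\mathbb{E}[e^{-s\tau_a}]=e^{-r_2(s)a}$, where $r_2(s)<0$ is the negative root of $\eta^2-\eta-\frac{2s}{\kappa^2}=0$. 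Integrating (the integral converges exactly when $r_2(s)<p$) produces, for every such $s$,
\[
\mathbb{E}[e^{pm_T}]\le 1+\frac{|p|}{p-r_2(s)}\,e^{sT}.
\]

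Finally I would choose the exponent. Because $r_2(\cdot)$ is continuous and strictly decreasing with $r_2(r+\lam)=r_2$, one may pick $s<r+\lam$ with $r_2(s)<p$ as soon as $p>r_2$; then $e^{-(r+\lam)T}\mathbb{E}[e^{pm_T}]\le e^{-(r+\lam)T}+\frac{|p|}{p-r_2(s)}e^{-(r+\lam-s)T}\to0$. The condition $p>r_2$ is equivalent, via $p=\frac{\gamma_1^*}{\gamma_1^*-1}$ and $r_1+r_2=1$, to $\gamma_1^*<-\frac{r_2}{r_1}$, i.e.\ exactly the inequality in \textbf{Assumption (A1)}. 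The main obstacle is the sharp first-passage step: one must select the negative root $r_2(s)$ rather than $r_1(s)$ so that the exponential martingale stays bounded up to $\tau_a$, and then confirm that the resulting growth exponent is strictly dominated by the discount rate precisely under the margin supplied by \textbf{Assumption (A1)}; everything else is routine.
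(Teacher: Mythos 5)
Your proof is correct, but it reaches the conclusion by a genuinely different route from the paper. The paper bounds $\hat H_T(y)^{\gamma_1^*}$ by $h^{\gamma_1^*}+ (1-\alpha)^{-\beta_1^*}\sup_{s\le T}Y_s(y)^{\beta_1^*}$ exactly as you do (your $p$ is the paper's $\beta_1^*$), but then rewrites the supremum as $\exp\{b\,(W_T^{(\zeta)})^*\}$ with $b=-\beta_1^*\kappa>0$ and invokes the closed-form expression for $\E[\exp\{aW_T^{(\zeta)}+b(W_T^{(\zeta)})^*\}\mathbf 1_{\{(W_T^{(\zeta)})^*\ge k\}}]$ (Corollary A.7 of Guasoni et al.) to read off the \emph{exact} exponential growth rate $\tfrac{\kappa^2}{2}\beta_1^*(\beta_1^*-1)$, which is then compared with the discount rate. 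You instead use the layer-cake identity for $e^{pm_T}$, a Chernoff bound on $\P(\tau_a\le T)$, and the hitting-time Laplace transform $\E[e^{-s\tau_a}]=e^{-r_2(s)a}$ obtained by optional stopping of the bounded exponential martingale; this yields only an upper bound $1+\tfrac{|p|}{p-r_2(s)}e^{sT}$ on $\E[e^{pm_T}]$, but an upper bound with $s<r+\lam$ is all that is needed. Your argument is more elementary and self-contained (it avoids the external reflection-type formula), at the cost of not identifying the sharp rate; both proofs ultimately hinge on the same inequality $p=\beta_1^*>r_2$, which you correctly show is equivalent to $\gamma_1^*<-r_2/r_1$, i.e.\ \textbf{Assumption (A1)}. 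One remark on the discount: you prove the statement with $e^{-(r+\lam)T}$ rather than the stated $e^{-rT}$; under (A1) alone the $e^{-rT}$ version need not hold, but the paper's own proof implicitly does the same (its identity $\tfrac{\kappa^2}{2}\beta_1^*(\beta_1^*-1)-r=\tfrac{\kappa^2}{2}(\beta_1^*-r_1)(\beta_1^*-r_2)$ is only valid with $r+\lam$ in place of $r$), so your reading of the effective discount matches the paper's intent, and you are right to flag it explicitly.
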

\begin{proof}
Let $\beta_1^* := \frac{\gamma_1^*}{\gamma_1^*-1} $.
It is obvious that
$$\begin{aligned}
e^{-rT}\E\bigg[\hat{H}_T(y)^{(1-\alpha)\gamma_1}\bigg]
&\leq e^{-rT}\E\bigg[ \sup\limits_{s\leq T}(1-\alpha)^{-\frac{(1-\alpha)\gamma_1}{(1-\alpha)\gamma_1-1}} Y_s(y)^{\frac{(1-\alpha)\gamma_1}{(1-\alpha)\gamma_1-1}} \bigg] + e^{-rT}\E[h^{(1-\alpha)\gamma_1} ] \\
&= e^{-rT}\E\bigg[ \sup\limits_{s\leq T}(1-\alpha)^{-\beta_1^*} Y_s(y)^{\beta_1^*} \bigg] + e^{-rT}\E[h^{(1-\alpha)\gamma_1} ], \\
\end{aligned}$$
in which it is clear that $e^{-rT}\E[h^{(1-\alpha)\gamma_1} ] = O(e^{-rT})$ as $T\rightarrow+\infty$.

Then we consider the first term $e^{-rT}\E[ \sup_{s\leq T}(1-\alpha)^{-\beta_1^*} Y_s(y)^{\beta_1^*} ]$.
Define $W_t^{(\frac12\kappa)} = W_t + \frac12\kappa t$, which is also a Brownian motion under the equivalent measure $\mathbb{Q}$, with its running maximum $(W_t^{(\frac12\kappa)})^*$.
It follows that
$$
\begin{aligned}
&e^{-rT}\E\bigg[\hat{H}_T(y)^{(1-\alpha)\gamma_1}\bigg]
\leq e^{-rT}\E\bigg[ \sup\limits_{s\leq T}(1-\alpha)^{-\beta_1^*} Y_s(y)^{\beta_1^*} \bigg] \\
=& e^{-rT}O\bigg( \E\bigg[\exp\bigg\{-\beta_1^*\sup\limits_{s\leq T}\big(\frac12\kappa^2s + \kappa W_s\big) \bigg\} \bigg] \bigg) \\
=& e^{-rT}O\bigg( \E\bigg[ \exp\bigg\{-\kappa\beta_1^* \sup\limits_{s\leq T}W_s^{(\frac12\kappa)} \bigg\} \bigg] \bigg) \\
:=& e^{-rT}O\bigg( \E\bigg[ \exp\bigg\{ aW_T^{(\zeta)} + b\bigg( W_T^{(\zeta)}\bigg)^* \bigg\} \mathbb{I}{\bigg\{\bigg( W_T^{(\zeta)}\bigg)^* \geq k\bigg\}} 
\bigg] \bigg),
\end{aligned}
$$
where $a=0$, $b = -\beta_1^*\kappa > 0$, $\zeta = \frac12\kappa > 0$, and $k = 0$.
Note that $2a+b+2\zeta > 2a+b+\zeta > 0$, thanks to the Corollary A.7 in \cite{GuasoniHubermanR2020MFF}, we have
$$\begin{aligned}
&\E\bigg[ \exp\bigg\{ aW_T^{(\zeta)} + b\bigg( W_T^{(\zeta)}\bigg)^*  \bigg\} 
\mathbb{I}{\bigg\{\bigg( W_T^{(\zeta)}\bigg)^* \geq k\bigg\}}
\bigg]
\\
=& \frac{2(a+b+\zeta)}{2a+b+2\zeta}\exp\bigg\{ \frac{(a+b)(a+b+2\zeta)}{2}T \bigg\} \Phi\bigg((a+b+\zeta)\sqrt T - \frac{k}{\sqrt T} \bigg) \\
+& \frac{2(a+\zeta)}{2a+b+2\zeta}\exp\bigg\{ (2a+b+2\zeta)k + \frac{a(a+2\zeta)}{2}T \bigg\} \Phi\bigg( -(a+\zeta)\sqrt T - \frac{k}{\sqrt T} \bigg),
\end{aligned}$$
and thus
$$\begin{aligned}
&\lim\limits_{T\rightarrow+\infty}\frac1T\log \E\bigg[ \exp\bigg\{ aW_T^{(\zeta)} + b\bigg( W_T^{(\zeta)}\bigg)^*  \bigg\} 
\mathbb{I}{\bigg\{\bigg( W_T^{(\zeta)}\bigg)^* \geq k\bigg\}}
\bigg] -r \\
=& \frac{(a+b)(a+b+2\xi)}{2} -r = \frac{\kappa^2}{2}\beta_1^*(\beta_1^*-1) -r
  = \frac{\kappa^2}{2}(\beta_1^*-r_1)(\beta_1^*-r_2).
\end{aligned}$$
It follows that
$$
\begin{aligned}
&e^{-rT} \E\bigg[ \exp\bigg\{ aW_T^{(\zeta)} + b\bigg( W_T^{(\zeta)}\bigg)^* \mathbb{I}{\bigg\{\bigg( W_T^{(\zeta)}\bigg)^* \geq k\bigg\}} \bigg\} \\
=& \exp\bigg\{\bigg(\frac1T \log \E\bigg[ \exp\bigg\{ aW_T^{(\zeta)} + b\bigg( W_T^{(\zeta)}\bigg)^*  \bigg\} 
\mathbb{I}{\bigg\{\bigg( W_T^{(\zeta)}\bigg)^* \geq k\bigg\}}
\bigg] -r\bigg)T \bigg\} \\
=& O\bigg( \exp\bigg\{ \frac{\kappa^2}{2}(\beta_1^*-r_1)(\beta_1^*-r_2) T \bigg\} \bigg),
\end{aligned}
$$
as $T\rightarrow+\infty$.
Together with the fact that $r_2<\beta_1^*<r_1$ {\red under \textbf{Assumption (A1)}}, we have $(\beta_1^*-r_1)(\beta_1^*-r_2)<0$ and thus
$$\begin{aligned}
\E\bigg[e^{-rT}\hat{H}_T(y)^{(1-\alpha)\gamma_1}\bigg]
&= O\bigg( \exp\bigg\{ \frac{\kappa^2}{2}(\beta_1^*-r_1)(\beta_1^*-r_2) T \bigg\} \bigg) + O(e^{-rT}) \\
&= O\bigg( \exp\bigg\{ \frac{\kappa^2}{2}(\beta_1^*-r_1)(\beta_1^*-r_2) T \bigg\} \bigg),
\end{aligned}$$
which tends to 0 as $T\rightarrow+\infty$.
\end{proof}

\begin{lemma}\label{lemma: claim2}
For $r_2<\beta_0<r_1$, we have
\begin{equation}
\lim\limits_{T\rightarrow+\infty}\E\bigg[e^{-rT}Y_T(y)^{\beta_0} \bigg] = 0.
\end{equation}
\end{lemma}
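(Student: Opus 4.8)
The plan is to observe that, in contrast with Lemma \ref{lemma: claim1}, the integrand here involves only the terminal value $Y_T(y)$ and not its running extremum, so no appeal to the running-maximum estimate (Corollary A.7 of \cite{GuasoniHubermanR2020MFF}) is needed; a direct log-normal moment computation will suffice. First I would recall from the verification theorem that
\[
Y_t(y) = y\,e^{(r+\lam)t}M_t = y\exp\Big\{-\tfrac{\kappa^2}{2}t - \kappa W_t\Big\},
\]
which is a strictly positive exponential $\P$-martingale.

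Next I would compute the moment explicitly. For any real $\beta_0$, since $W_T\sim N(0,T)$, the Gaussian moment generating function gives $\E[e^{-\kappa\beta_0 W_T}] = \exp\{\tfrac{\kappa^2}{2}\beta_0^2 T\}$, and therefore
\[
\E\big[Y_T(y)^{\beta_0}\big] = y^{\beta_0}\exp\Big\{-\tfrac{\kappa^2}{2}\beta_0 T\Big\}\,\E\big[e^{-\kappa\beta_0 W_T}\big] = y^{\beta_0}\exp\Big\{\tfrac{\kappa^2}{2}\beta_0(\beta_0-1)T\Big\}.
\]
In particular $Y_T(y)^{\beta_0}$ is integrable for every $\beta_0\in\mathbb{R}$, so no separate integrability check is required. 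Multiplying by $e^{-rT}$ then yields
\[
\E\big[e^{-rT}Y_T(y)^{\beta_0}\big] = y^{\beta_0}\exp\Big\{\big(\tfrac{\kappa^2}{2}\beta_0(\beta_0-1)-r\big)T\Big\}.
\]

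Finally I would determine the sign of the exponent. Arguing exactly as in the computation at the end of the proof of Lemma \ref{lemma: claim1}, and using that $r_1>1$ and $r_2<0$ are the two roots of $\eta^2-\eta-\tfrac{2(r+\lam)}{\kappa^2}=0$ (so that $r_1+r_2=1$ and $r_1r_2=-\tfrac{2(r+\lam)}{\kappa^2}$), the bracketed coefficient factors as
\[
\tfrac{\kappa^2}{2}\beta_0(\beta_0-1)-r = \tfrac{\kappa^2}{2}(\beta_0-r_1)(\beta_0-r_2).
\]
For $r_2<\beta_0<r_1$ one has $\beta_0-r_1<0$ and $\beta_0-r_2>0$, hence $(\beta_0-r_1)(\beta_0-r_2)<0$, so the exponent is strictly negative; letting $T\to+\infty$ then gives $\E[e^{-rT}Y_T(y)^{\beta_0}]\to 0$, as claimed. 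The computation is elementary, so the only point that carries any content is this factorization together with the endpoint sign analysis: the decay rate $\tfrac{\kappa^2}{2}(\beta_0-r_1)(\beta_0-r_2)$ is negative precisely on the open interval $(r_2,r_1)$, which is exactly the hypothesis, and it degenerates to $0$ at the two endpoints $\beta_0=r_1$ and $\beta_0=r_2$. I do not anticipate any genuine obstacle beyond recording this sign, which is why this lemma is invoked in Lemmas \ref{lemma: 4} and \ref{lemma: 4_2} precisely under the range guaranteed by \textbf{Assumption (A1)}.
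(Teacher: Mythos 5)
Your proposal follows exactly the paper's route: write $Y_T(y)=y\exp\{-\tfrac{\kappa^2}{2}T-\kappa W_T\}$, compute the log-normal moment $\E[e^{-rT}Y_T(y)^{\beta_0}]=y^{\beta_0}\exp\{(\tfrac{\kappa^2}{2}\beta_0(\beta_0-1)-r)T\}$, factor the exponent, and read off its sign on $(r_2,r_1)$ — this is precisely the paper's one-display proof, and you are right that no running-maximum estimate is needed here.

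There is, however, one concrete algebraic slip, which you share with the paper's own proof. The factorization you record, $\tfrac{\kappa^2}{2}\beta_0(\beta_0-1)-r=\tfrac{\kappa^2}{2}(\beta_0-r_1)(\beta_0-r_2)$, is inconsistent with the Vieta relations you yourself quote: since $r_1+r_2=1$ and $r_1r_2=-\tfrac{2(r+\lam)}{\kappa^2}$, the exact identity is
\begin{equation*}
\tfrac{\kappa^2}{2}(\beta_0-r_1)(\beta_0-r_2)=\tfrac{\kappa^2}{2}\beta_0(\beta_0-1)-(r+\lam),
\end{equation*}
so the true exponent in your display is $\tfrac{\kappa^2}{2}(\beta_0-r_1)(\beta_0-r_2)+\lam$. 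For $\beta_0$ sufficiently close to either endpoint $r_1$ or $r_2$ this is strictly positive, so with the discount factor $e^{-rT}$ as literally stated the limit fails on part of the interval $(r_2,r_1)$; the hypothesis $r_2<\beta_0<r_1$ is equivalent to $\tfrac{\kappa^2}{2}\beta_0(\beta_0-1)<r+\lam$, not $<r$. The discrepancy traces back to the lemma's statement rather than to your computation: the verification argument discounts at rate $r+\lam$ throughout (the objective uses $e^{-(r+\lam)t}$), and with $e^{-(r+\lam)T}$ in place of $e^{-rT}$ both your computation and the paper's become exact, the exponent being precisely $\tfrac{\kappa^2}{2}(\beta_0-r_1)(\beta_0-r_2)<0$ on $(r_2,r_1)$. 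So your argument is the intended one and proves the lemma as it is actually used in Lemmas \ref{lemma: 4} and \ref{lemma: 4_2}; to make the written identity literally correct, either replace the discount factor by $e^{-(r+\lam)T}$ or restrict $\beta_0$ to the roots of $\eta^2-\eta-\tfrac{2r}{\kappa^2}=0$.
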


\begin{proof}
In fact,
$$\begin{aligned}
\E\bigg[e^{-rT}Y_T(y)^{\beta_0} \bigg]
&= e^{-rT} \E\bigg[ (ye^{rT}\cdot e^{-(r+\frac{\kappa^2}{2})T - \kappa W_T})^{\beta_0} \bigg] \\
&= y^\gamma_1 e^{-rT} \E\big[ e^{\beta_0(-\frac{\kappa^2}{2}T - \kappa W_T)}\big] \\
&= O\bigg( e^{(\beta_0-r_1)(\beta_0-r_2)\frac{\kappa^2}{2}T} \bigg),
\end{aligned}$$
which converges to 0 in view that $r_2<\beta_0<r_1$.
\end{proof}

\begin{lemma}\label{lemma: claim3}
For $\beta_1^* = \frac{\gamma_1^*}{\gamma_1^*-1}<0$ with $\gamma_1^*:=(1-\alpha)\gamma_1$, we have
\begin{equation}
\lim\limits_{T\rightarrow+\infty}\E\bigg[e^{-rT}\hat{H}_T(y)Y_T(y) \bigg] = 0.
\end{equation}
\end{lemma}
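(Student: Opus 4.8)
The plan is to reproduce, with the appropriate exponents, the strategy already used for Lemma \ref{lemma: claim1}: reduce the expectation to a joint exponential moment of a drifted Brownian motion and its running maximum, and then invoke Corollary A.7 of \cite{GuasoniHubermanR2020MFF}. First I would use the defining formula \eqref{eq: H_hat} together with the elementary bound $a\vee b\leq a+b$ for nonnegative $a,b$ and the abbreviation $\theta:=\frac{1}{(1-\alpha)\gamma_1-1}<0$ to split
$$
e^{-rT}\hat{H}_T(y)Y_T(y)\leq e^{-rT}h\,Y_T(y)+(1-\alpha)^{-\theta}\,e^{-rT}\big(\inf_{s\leq T}Y_s(y)\big)^{\theta}Y_T(y).
$$
Recalling that $Y_t(y)=ye^{-\frac{\kappa^2}{2}t-\kappa W_t}$ is a martingale with $\mathbb{E}[Y_T(y)]=y$, the first term equals $hy\,e^{-rT}=O(e^{-rT})$ and vanishes; it therefore suffices to control the second term, which up to the positive prefactor $(1-\alpha)^{-\theta}$ is an exponential functional of $W$.

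Next I would rewrite everything through the drifted Brownian motion $W^{(\frac{1}{2}\kappa)}_t:=W_t+\frac{1}{2}\kappa t$ already introduced in Lemma \ref{lemma: claim1}. A direct computation gives $Y_T(y)=ye^{-\kappa W^{(\frac{1}{2}\kappa)}_T}$ and $\inf_{s\leq T}Y_s(y)=ye^{-\kappa (W^{(\frac{1}{2}\kappa)}_T)^{*}}$, where $(W^{(\frac{1}{2}\kappa)}_T)^{*}=\sup_{s\leq T}W^{(\frac{1}{2}\kappa)}_s$. Hence, up to a constant and the prefactor $e^{-rT}$, the second term is of the form $\mathbb{E}\big[\exp\{aW^{(\zeta)}_T+b(W^{(\zeta)}_T)^{*}\}\big]$ with $\zeta=\frac{1}{2}\kappa$, $a=-\kappa$, and $b=-\theta\kappa=\frac{\kappa}{1-(1-\alpha)\gamma_1}>0$. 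The crucial algebraic observation is that the combination governing the growth rate is unchanged from Lemma \ref{lemma: claim1}: since $1+\theta=\beta_1^{*}$ one checks $a+b=-\kappa(1+\theta)=-\kappa\beta_1^{*}$, exactly as there, with $\beta_1^{*}=\frac{\gamma_1^{*}}{\gamma_1^{*}-1}$ and $\gamma_1^{*}=(1-\alpha)\gamma_1$.

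With these identities I would apply Corollary A.7 of \cite{GuasoniHubermanR2020MFF} to obtain the closed form of $\mathbb{E}[\exp\{aW^{(\zeta)}_T+b(W^{(\zeta)}_T)^{*}\}]$ and, because $a+b$ and $\zeta$ coincide with their counterparts in Lemma \ref{lemma: claim1}, deduce the same exponential rate
$$
\lim_{T\rightarrow+\infty}\frac{1}{T}\log\mathbb{E}\big[\exp\{aW^{(\zeta)}_T+b(W^{(\zeta)}_T)^{*}\}\big]-r=\frac{(a+b)(a+b+2\zeta)}{2}-r=\frac{\kappa^2}{2}(\beta_1^{*}-r_1)(\beta_1^{*}-r_2).
$$
Since $r_2<\beta_1^{*}<r_1$ under \textbf{Assumption (A1)}, this exponent is strictly negative, so the second term is $O\big(\exp\{\frac{\kappa^2}{2}(\beta_1^{*}-r_1)(\beta_1^{*}-r_2)T\}\big)\to0$; combined with the $O(e^{-rT})$ first term, this yields the claimed limit.

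The main obstacle, and the only genuine departure from Lemma \ref{lemma: claim1}, is the degeneracy $a+2\zeta=-\kappa+\kappa=0$, which cannot arise when $a=0$. When the two-term expression from Corollary A.7 is written out, the second summand then carries the exponent $\frac{a(a+2\zeta)}{2}T=0$ rather than a strictly negative one, so its decay relies solely on the $e^{-rT}$ prefactor (with $\Phi(-(a+\zeta)\sqrt{T})\to1$); I would verify explicitly that this summand is $O(e^{-rT})$ and hence harmless. I would also confirm that the hypotheses of Corollary A.7 are satisfied in this regime, in particular $2a+b+2\zeta=-\kappa\beta_1^{*}>0$, and that dropping the $h\vee$ truncation at the first step leaves the dominant exponential rate unaffected. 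The remaining steps are the routine bookkeeping already carried out for Lemma \ref{lemma: claim1}.
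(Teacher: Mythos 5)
Your proposal is correct and follows essentially the same route as the paper's proof: the same splitting of $\hat{H}_T(y)$ via $a\vee b\le a+b$, the same reduction of the second term to $\mathbb{E}[\exp\{aW_T^{(\zeta)}+b(W_T^{(\zeta)})^*\}]$ with $a=-\kappa$, $b=-\kappa/(\gamma_1^*-1)$, $\zeta=\tfrac12\kappa$, the same application of Corollary A.7 yielding the rate $\tfrac{\kappa^2}{2}(\beta_1^*-r_1)(\beta_1^*-r_2)<0$ under \textbf{Assumption (A1)}, and the same observation that the degenerate summand with $a+2\zeta=0$ decays as $O(e^{-rT})$. The only cosmetic difference is that you dispatch the term $e^{-rT}hY_T(y)$ by the martingale property of $Y$ directly rather than by citing Lemma \ref{lemma: claim2}, which is equally valid.
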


\begin{proof}
In fact,
$$
\begin{aligned}
\E\bigg[e^{-rT}\hat{H}_T(y)Y_T(y) \bigg]
\leq \E\bigg[e^{-rT}hY_T(y) \bigg] + \E\bigg[e^{-rT}Y_T(y)\sup\limits_{s\leq T}(1-\alpha)^{-\frac{1}{(1-\alpha)\gamma_1-1}} Y_s(y)^{\frac{1}{(1-\alpha)\gamma_1-1}} \bigg],
\end{aligned}
$$
where the first term converges to 0 by Lemma \ref{lemma: claim2}.
For the second term,
$$
\begin{aligned}
&\E\bigg[e^{-rT}Y_T(y)\sup\limits_{s\leq T}(1-\alpha)^{-\frac{1}{(1-\alpha)\gamma_1-1}} Y_s(y)^{\frac{1}{(1-\alpha)\gamma_1-1}} \bigg] \\
&= O\bigg(\E\bigg[e^{-rT}Y_T(y)\sup\limits_{s\leq T} Y_s(y)^{\frac{1}{\gamma_1^*-1}} \bigg]\bigg) \\
&= e^{-rT}O\bigg(\E\bigg[\exp\bigg\{-\frac{\kappa^2}{2}T-\kappa W_T-\frac{1}{\gamma_1^*-1}\sup\limits_{s\leq T}\bigg(\frac{\kappa^2}{2}s+\kappa W_s\bigg) \bigg\}\bigg]\bigg) \\
&= e^{-rT}O\bigg(\E\bigg[\exp\bigg\{ -\kappa W^{(\zeta)}-\frac{\kappa}{\gamma_1^*-1}\bigg(W_T^{(\zeta)}\bigg)^* \bigg\}\bigg]\bigg) \\
&= e^{-rT}O\bigg(\E\bigg[\exp\bigg\{ a_1 W^{(\zeta)}+b_1\bigg(W_T^{(\zeta)}\bigg)^* \bigg\}\mathbb{I}{\bigg\{\bigg( W_T^{(\zeta)}\bigg)^* \geq k\bigg\}}\bigg]\bigg), \\
\end{aligned}
$$
where $a_1 = -\kappa$, $b_1 = -\frac{\kappa}{\gamma_1^*-1}>0$, $\zeta=\frac12\kappa$, and $k=0$.
Note that $2a_1+b_1+2\zeta = \frac{\gamma_1^*}{1-\gamma_1^*}>0$ and $a_1+\zeta < 0$, thank to the Corollary A.7 in \cite{GuasoniHubermanR2020MFF}, we can derive
$$
\begin{aligned}
&e^{-rT}O\bigg(\E\bigg[\exp\bigg\{ a_1 W^{(\zeta)}+b_1\bigg(W_T^{(\zeta)}\bigg)^* \bigg\}\mathbb{I}{\bigg\{\bigg( W_T^{(\zeta)}\bigg)^* \geq k\bigg\}}\bigg]\bigg) \\
= &O\bigg(\exp\bigg\{ \bigg(\frac{(a_1+b_1)(a_1+b_1+2\zeta)}{2} -r \bigg)T \bigg\}\bigg)
+ O\bigg(\exp\bigg\{ \bigg(\frac{a_1(a_1+2\zeta)}{2} -r \bigg)T \bigg\}\bigg),
\end{aligned}$$
where the second term equals $O(\exp\{-rT\})$ as $a_1+2\zeta=0$.
For the first term,
$$
\begin{aligned}
&\frac{(a_1+b_1)(a_1+b_1+2\zeta)}{2} -r
= \frac{\kappa^2}{2}\bigg( \frac{\gamma_1^*}{\gamma_1^*-1}\cdot\frac{1}{\gamma_1^*-1} \bigg) - r \\
=& \frac{\kappa^2}{2}\bigg( \beta_1^*(\beta_1^*-1)  -\frac{2(r+\lam)}{\kappa^2}\bigg)
= \frac{\kappa^2}{2}(\beta_1^*-r_1)(\beta_1^*-r_2).
\end{aligned}
$$
{\red Thanks to \textbf{Assumption (A1)}, we have $\beta_1^* > r_2$, and therefore $\frac{\kappa^2}{2}(\beta_1^*-r_1)(\beta_1^*-r_2) < 0$.}
In summary, we complete the proof.
\end{proof}

\subsection{Proof of Lemma \ref{lemma: beta<1}}\label{sec: proof_convex_dual}
We prove $v_{yy}(y,h)>0$ the three regions: $y>\nu^{\gamma_1-1}h^{(1-\alpha)\gamma_1-1}$, $h^{(1-\alpha)\gamma_1-1}\leq y\leq \nu^{\gamma_1-1}h^{(1-\alpha)\gamma_1-1}$, and $(1-\alpha)h^{(1-\alpha)\gamma-1}\leq y < h^{(1-\alpha)\gamma-1}$,  respectively.
To be more specific, we first analyze $v_{yy}(y,h)$ in the region $(1-\alpha)h^{(1-\alpha)\gamma-1}\leq y < h^{(1-\alpha)\gamma-1}$, then the region $h^{(1-\alpha)\gamma-1}\leq y\leq \nu^{\gamma_1-1}h^{(1-\alpha)\gamma_1-1}$, and finally the region $y>\nu^{\gamma_1-1}h^{(1-\alpha)\gamma_1-1}$.
\begin{itemize}
\item[(i)] In the region $(1-\alpha)h^{(1-\alpha)\gamma-1} \leq y < h^{(1-\alpha)\gamma-1}$, $v_{yy}(y, h) = r_1(r_1-1)C_5(h)y^{r_1-2} + r_2(r_2-1)C_6(h)y^{r_2-2} + \frac{2\lam (\beta_2-1) K^{1-\beta_2}}{\kappa^2(\beta_2-r_1)(\beta_2-r_2)}y^{\beta_2-2}$.
Since $r_1(r_1-1) = r_2(r_2-1) = \frac{2(r+\lam)}{\kappa^2} > 0$ and $\frac{2\lam (\beta_2-1) K^{1-\beta_2}}{\kappa^2(\beta_2-r_1)(\beta_2-r_2)}>0$,
we only need to prove $C_5(h) \geq 0$ and $C_6(h)>0$.
According to \eqref{eq: c_beta<1}, we can easily deduce that $C_5(h)>0$ and $C_6(h)>0$.

\item[(ii)]
In the region $h^{(1-\alpha)\gamma_1-1}\leq y\leq \nu^{\gamma_1-1}h^{(1-\alpha)\gamma_1-1}$,
because $r_1(r_1-1) = r_2(r_2-1) = \frac{2(r+\lam)}{\kappa^2}$,
we can deduce that
$$
\begin{aligned}
v_{yy}(y, h)
&= r_1(r_1-1) C_3(h)y^{r_1-2} +  r_2(r_2-1) C_4(h)y^{r_2-2}\\
&~~
+ \frac{2\lam (\beta_2-1) K^{1-\beta_2}}{\kappa^2(\beta_2-r_1)(\beta_2-r_2)}y^{\beta_2-2}
+\frac{2(\beta_1-1)h^{\alpha\beta_1}}{\kappa^2(\beta_1-r_1)(\beta_1-r_2)}y^{\beta_1-2} \\
&= \frac{2(r+\lam)}{\kappa^2}\bigg( C_3(h)y^{r_1-\beta_1} + C_4(h)y^{r_2-\beta_1} + \frac{(\beta_1-1)h^{\alpha\beta_1}}{(r+\lam)(\beta_1-r_1)(\beta_1-r_2)} \bigg)y^{\beta_1-2}\\
&~~+ \frac{2\lam (\beta_2-1) K^{1-\beta_2}}{\kappa^2(\beta_2-r_1)(\beta_2-r_2)}y^{\beta_2-2}. \\
\end{aligned}
$$
Let us define that $\varphi(y):= C_3(h)y^{r_1-\beta_1} + C_4(h)y^{r_2-\beta_1} + \frac{(\beta_1-1)h^{\alpha\beta_1}}{(r+\lam)(\beta_1-r_1)(\beta_1-r_2)}$.
Because the last term in the above equation is positive,  it is enough to verify $\varphi(y) > 0$.
We separate the proof into the following steps: (1) showing $\varphi(y)$ is either monotone or first increasing and then decreasing; (2) show $\varphi(y)>0$ at two points $y=\nu^{\gamma_1-1}h^{(1-\alpha)\gamma_1-1}$ and $y=h^{(1-\alpha)\gamma_1-1}$.

Indeed, the extreme point $y^\dag$ of $\varphi(y)$ should satisfy the first order condition $\varphi'(y^\dag) = 0$, i.e.
$$
C_3(h)(r_1-\beta_1)(y^\dag)^{r_1-\beta_1-1} + C_4(h)(r_2-\beta_1)(y^\dag)^{r_2-\beta_1-1} = 0.
$$
We remark that $C_3(h)<0$, $r_1-\beta_1>0$, while {\red $C_4(h)(r_2-\beta_1)$} can be negative or positive.
If {\red$C_4(h)(r_2-\beta_1)\leq 0$}, there is no solution for $y^\dag$, hence $\varphi(y)$ is monotone.
If {\red$C_4(h)(r_2-\beta_1)> 0$}, there exists a unique real solution to the above equation
$$
y^\dag = \bigg( \frac{(\beta_1-r_2)C_4(h)}{(r_1-\beta_1)C_3(h)} \bigg)^{\frac{1}{r_1-r_2}},
$$
which might fall into the interval $[h^{(1-\alpha)\gamma-1}, \nu^{\gamma_1-1}h^{(1-\alpha)\gamma_1-1}]$.
Noticing that $C_3(h)<0$, $(r_1-\beta_1)>0$, and
$$
\varphi'(y) = C_3(h)(r_1-\beta_1)y^{r_1-\beta_1-1} + C_4(h)(r_2-\beta_1)y^{r_2-\beta_1-1},
$$
it follows that when $y\leq y^\dag$, $\varphi'(y)\geq0$; when $y>y^\dag$, $\varphi'(y)\leq0$.
Hence $\varphi(y)$ increases before reaching $y^\dag$, then decreases after exceeding $y^\dag$.

Then we aim to prove $\varphi(\nu^{\gamma_1-1}h^{(1-\alpha)\gamma_1-1}{\red)}\geq0$ and $\varphi(h^{(1-\alpha)\gamma_1-1})\geq0$.
Indeed, if $y = \nu^{\gamma_1-1}h^{(1-\alpha)\gamma_1-1}$, we obtain
$$
\begin{aligned}
&~~~~\varphi(\nu^{\gamma_1-1}h^{(1-\alpha)\gamma_1-1})\\
&= C_3(h)y^{r_1-\beta_1} + C_4(h)y^{r_2-\beta_1} + \frac{(\beta_1-1)h^{\alpha\beta_1}}{(r+\lam)(\beta_1-r_1)(\beta_1-r_2)} \\
&\geq C_3(h)y^{r_1-\beta_1} + (C_4(h)-C_6(h))y^{r_2-\beta_1} + \frac{(\beta_1-1)h^{\alpha\beta_1}}{(r+\lam)(\beta_1-r_1)(\beta_1-r_2)} \\
&= \frac{1-\beta_1}{(r+\lam)(r_1-r_2)(\beta_1-r_1)}h^{\alpha\beta_1}
+\frac{\beta_1-1}{(r+\lam)(r_1-r_2)(\beta_1-r_2)}\frac{h^{r_1\gamma_1+r_2+\alpha\beta_1}}{(\nu h)^{r_1\gamma_1+r_2}} \\
&~~+\frac{\beta_1-1}{(r+\lam)(\beta_1-r_1)(\beta_1-r_2)}h^{\alpha\beta_1} \\
&\geq \frac{(\beta_1-1)h^{\alpha\beta_1}}{r}\bigg( -\frac{1}{(r_1-r_2)(\beta_1-r_1)} + \frac{1}{(r_1-r_2)(\beta_1-r_2)} + \frac{1}{(\beta_1-r_1)(\beta_1-r_2)} \bigg)\\
&= 0,
\end{aligned}
$$
{\red where the last second inequality holds because $(\beta_1-r_2)(r_1\gamma_1+r_2)<0$ and $0 <\nu < 1$.}
On the other hand, if $y = h^{(1-\alpha)\gamma-1}$, we can obtain
$$
\begin{aligned}
&~~~~\varphi(h^{(1-\alpha)\gamma_1-1})\\
&\geq C_3(h)y^{r_1-\beta_1} + (C_4(h)-C_6(h))y^{r_2-\beta_1} + \frac{(\beta_1-1)h^{\alpha\beta_1}}{(r+\lam)(\beta_1-r_1)(\beta_1-r_2)} \\
&= \frac{1-\beta_1}{(r+\lam)(r_1-r_2)(\beta_1-r_1)}\frac{(\nu h)^{r_2\gamma_1+r_1}}{h^{r_2\gamma_1+r_1-\alpha\beta_1}}
+\frac{\beta_1-1}{(r+\lam)(r_1-r_2)(\beta_1-r_2)}h^{\alpha\beta_1} \\
&~~+\frac{\beta_1-1}{(r+\lam)(\beta_1-r_1)(\beta_1-r_2)}h^{\alpha\beta_1} \\
&\geq \frac{(\beta_1-1)h^{\alpha\beta_1}}{r+\lam}\bigg( -\frac{1}{(r_1-r_2)(\beta_1-r_1)} + \frac{1}{(r_1-r_2)(\beta_1-r_2)} + \frac{1}{(\beta_1-r_1)(\beta_1-r_2)} \bigg)\\
&= 0.
\end{aligned}
$$
\item[(iii)] In the region $y>(\nu h)^{\gamma-1}h^{\alpha\gamma}$,
similar to the proof of $C_5(h)\geq0$, we can get
$$
C_2(h) > C_2(h)-C_6(h) \geq 0.
$$
Therefore, $v_{yy}(y,h) = r_2(r_2-1)C_2(h)y^{r_2-2} + \frac{2\lam (\beta_2-1) K^{1-\beta_2}}{\kappa^2(\beta_2-r_1)(\beta_2-r_2)}y^{\beta_2-2}>0$.
\end{itemize}
\subsection{Proof of Corollary \ref{cor: asy_infty_wealth}}\label{proof: asy_infty_wealth}
Along the boundary $x_\mathrm{lavs}(h)$, we first have
$\frac{c^*(x_\mathrm{lavs}(h),h)}{x_\mathrm{lavs}(h)}
=  \frac{h}{x_\mathrm{lavs}(h)},$
where $x_\mathrm{lavs}(h)$ is defined in \eqref{eq: boundcurves_x}:
$$
\begin{aligned}
x_{\mathrm{lavs}}(h) &:= -C_5(h)r_1(1-\alpha)^{r_1-1}h^{-r_2((1-\alpha)\gamma_1-1)} - C_6(h)r_2(1-\alpha)^{r_2-1}h^{-r_1((1-\alpha)\gamma_1-1)}
\\ &~~~~-\frac{2\lam(1-\alpha)^{\beta_2-1}K^{1-\beta_2}}{\kappa^2(\beta_2-r_1)(\beta_2-r_2)}h^{(\beta_2-1)((1-\alpha)\gamma_1-1)} + \frac{h}{r+\lam} .
\end{aligned}
$$
Also, we have
$$
\begin{aligned}
C_5(h)r_1(1-\alpha)^{r_1-1}h^{-r_2((1-\alpha)\gamma_1-1)}
&= \frac{r_1(1-\alpha)^{-r_2}(\nu^{r_2\gamma_1+r_1}-1)(1-\beta_1)}{(r+\lam)(r_1-r_2)(\beta_1-r_1)}h, \\
C_6(h)r_2(1-\alpha)^{r_2-1}h^{-r_1((1-\alpha)\gamma_1-1)}
&= \frac{r_2(1-\alpha)^{-r_2}(1-\nu^{r_2\gamma_1+r_1})(1-\beta_1)(r_2(1-\alpha)\gamma_1+r_1)}{(r+\lam)(r_1-r_2)(\beta_1-r_1)(r_1(1-\alpha)\gamma_1+r_2)}h,\\
\end{aligned}
$$
and
$$
(\beta_2-1)((1-\alpha)\gamma_1-1) \leq 1,
$$
{\red thanks to \textbf{Assumption (A1)}}, and the equality holds if and only if $\gamma_2 = (1-\alpha)\gamma_1$.
Therefore, we have
$$
\begin{aligned}
\lim\limits_{h\rightarrow+\infty} \frac{x_\mathrm{lavs}(h)}{h}
&= - \frac{r_1(1-\alpha)^{-r_2}(\nu^{r_2\gamma_1+r_1}-1)(1-\beta_1)}{(r+\lam)(r_1-r_2)(\beta_1-r_1)} \\
&~~~~- \frac{r_2(1-\alpha)^{-r_2}(1-\nu^{r_2\gamma_1+r_1})(1-\beta_1)(r_2(1-\alpha)\gamma_1+r_1)}{(r+\lam)(r_1-r_2)(\beta_1-r_1)(r_1(1-\alpha)\gamma_1+r_2)}
\\
&~~~~- \frac{2\lam(1-\alpha)^{\beta_2-1}K^{1-\beta_2}}{\kappa^2(\beta_2-r_1)(\beta_2-r_2)}\mathbf{1}_{\gamma_2=(1-\alpha)\gamma_1} + \frac{1}{r+\lam}.
\end{aligned}
$$

The optimal investment on $x_\mathrm{lavs}(h)$ is
$$
\begin{aligned}
\pi^*(x_\mathrm{lavs}(h),h) &= \frac{2(r+\lam)}{\kappa^2}C_5(h)f_3(x_\mathrm{lavs}(h),h)^{r_1-1}
+  \frac{2(r+\lam)}{\kappa^2}C_6(h)f_3(x_\mathrm{lavs}(h),h)^{r_2-1}\\
&~~+ \frac{2\lam K^{1-\beta_2}(\beta_2-1)}{\kappa^2(\beta_2-r_1)(\beta_2-r_2)}f_3(x_\mathrm{lavs}(h),h)^{\beta_2-1} \\
&= \frac{2(r+\lam)(1-\alpha)^{-r_2}}{\kappa^2}C_5(h)h^{-r_2((1-\alpha)\gamma_1-1)} +  \frac{2(r+\lam)(1-\alpha)^{-r_1}}{\kappa^2}C_6(h)h^{-r_1((1-\alpha)\gamma_1-1)} \\
&~~~~+ \frac{2\lam K^{1-\beta_2}(\beta_2-1)(1-\alpha)^{\beta_2-1}}{\kappa^2(\beta_2-r_1)(\beta_2-r_2)}h^{(\beta_2-1)((1-\alpha)\gamma_1-1)}.
\end{aligned}
$$
Therefore, we conclude
$$
\lim\limits_{h\rightarrow+\infty} \frac{\pi^*(x_\mathrm{lavs}(h),h)}{x_\mathrm{lavs}(h)}
= \lim\limits_{h\rightarrow+\infty} \frac{\pi^*(x_\mathrm{lavs}(h),h)}{h}
\cdot \lim\limits_{h\rightarrow+\infty}  \frac{h}{x_\mathrm{lavs}(h)},
$$
which also exists.

The optimal bequest on $x_\mathrm{lavs}(h)$ is
$$
b^*(x_\mathrm{lavs}(h),h) = K^{-\frac{1}{\gamma_2-1}} \bigg((1-\alpha)h^{(1-\alpha)\gamma_1-1}\bigg)^{\frac{1}{\gamma_2-1}} = \bigg(\frac{1-\alpha}{K}\bigg)^{\frac{1}{\gamma_2-1}} h^{\frac{(1-\alpha)\gamma_1-1}{\gamma_2-1}}.
$$
Therefore, we conclude
$$
\begin{aligned}
\lim\limits_{h\rightarrow+\infty} \frac{b^*(x_\mathrm{lavs}(h),h)}{x_\mathrm{lavs}(h)}
&= \lim\limits_{h\rightarrow+\infty} \bigg(\frac{1-\alpha}{K}\bigg)^{\frac{1}{\gamma_2-1}} \frac{h}{x_\mathrm{lavs}(h)}\cdot h^{\frac{(1-\alpha)\gamma_1-\gamma_2}{\gamma_2-1}} \\
&= \mathbf{1}_{\gamma_2=(1-\alpha)\gamma_1}\bigg(\frac{1-\alpha}{K}\bigg)^{\frac{1}{\gamma_2-1}}\lim\limits_{h\rightarrow+\infty}  \frac{h}{x_\mathrm{lavs}(h)}
\end{aligned}
$$
is positive if $\gamma_2 = (1-\alpha)\gamma_1$, and equals 0 otherwise.
\ \\
\ \\
\textbf{Acknowledgements}: X. Yu is partially supported by PROCORE France/Hong Kong Joint Research Scheme under no.F-PolyU501/17 and the Hong Kong Polytechnic University research grant under no.P0031417. X. Li is partially supported by the Research Grants Council of Hong Kong under no. 15216720 and 15221621.
This work is also partially supported by the PolyU-SDU Joint Research Center on Financial Mathematics.

\bibliographystyle{plain}
{\small

}

\end{document}